\begin{document}
\newcommand {\emptycomment}[1]{} 

\newcommand{\tabincell}[2]{\begin{tabular}{@{}#1@{}}#2\end{tabular}}

\newcommand{\nc}{\newcommand}
\newcommand{\delete}[1]{}
\nc{\mfootnote}[1]{\footnote{#1}} 
\nc{\todo}[1]{\tred{To do:} #1}

\delete{
\nc{\mlabel}[1]{\label{#1}}  
\nc{\mcite}[1]{\cite{#1}}  
\nc{\mref}[1]{\ref{#1}}  
\nc{\meqref}[1]{\eqref{#1}} 
\nc{\bibitem}[1]{\bibitem{#1}} 
}

\nc{\mlabel}[1]{\label{#1}  
{\hfill \hspace{1cm}{\bf{{\ }\hfill(#1)}}}}
\nc{\mcite}[1]{\cite{#1}{{\bf{{\ }(#1)}}}}  
\nc{\mref}[1]{\ref{#1}{{\bf{{\ }(#1)}}}}  
\nc{\meqref}[1]{\eqref{#1}{{\bf{{\ }(#1)}}}} 
\nc{\mbibitem}[1]{\bibitem[\bf #1]{#1}} 

\newtheorem{thm}{Theorem}[section]
\newtheorem{lem}[thm]{Lemma}
\newtheorem{cor}[thm]{Corollary}
\newtheorem{pro}[thm]{Proposition}
\newtheorem{conj}[thm]{Conjecture}
\theoremstyle{definition}
\newtheorem{defi}[thm]{Definition}
\newtheorem{ex}[thm]{Example}
\newtheorem{rmk}[thm]{Remark}
\newtheorem{pdef}[thm]{Proposition-Definition}
\newtheorem{condition}[thm]{Condition}



\title[Anti-pre-Lie algebras]
{Anti-pre-Lie algebras, Novikov algebras and commutative
2-cocycles on Lie algebras}

\author{Guilai Liu}
\address{Chern Institute of Mathematics \& LPMC, Nankai University, Tianjin 300071, China}
\email{1120190007@mail.nankai.edu.cn}

\author{Chengming Bai}
\address{Chern Institute of Mathematics \& LPMC, Nankai University, Tianjin 300071, China }
\email{baicm@nankai.edu.cn}


\begin{abstract}

 We introduce the
notion of anti-pre-Lie algebras as the underlying algebraic
structures of nondegenerate commutative 2-cocycles which are the
``symmetric" version of symplectic forms on Lie algebras. They can
be characterized as a class of Lie-admissible algebras whose
negative left multiplication operators make  representations of
the commutator Lie algebras. We observe that there is  a clear
analogy between
 anti-pre-Lie algebras and pre-Lie algebras by comparing them in terms of
several aspects. Furthermore, it is unexpected that a subclass of
anti-pre-Lie algebras, namely admissible Novikov algebras,
correspond to Novikov algebras in terms of $q$-algebras.
Consequently, there is a construction of admissible Novikov
algebras from commutative associative algebras with derivations or
more generally, admissible pairs. The correspondence extends to
the level of Poisson type structures, leading to the introduction
of the notions of anti-pre-Lie Poisson algebras and admissible
Novikov-Poisson algebras, whereas the latter correspond to
Novikov-Poisson algebras.

\end{abstract}


\subjclass[2010]{
17A36,  
17A40,  
17B10, 
17B40, 
17B60, 
17B63,  
17D25.  
}

\keywords{anti-pre-Lie algebra; commutative 2-cocycle; pre-Lie
algebra; Novikov algebra}

\maketitle


\tableofcontents

\allowdisplaybreaks

\section{Introduction}

This paper aims to introduce the notion of anti-pre-Lie algebras
and then study the relationships between them and the related
structures such as anti-$\mathcal O$-operators, commutative
2-cocycles on Lie algebras and Novikov algebras.

At first we recall the notion of commutative 2-cocycles on Lie algebras.

\begin{defi} (\cite{Dzh})
A \textbf{commutative 2-cocycle} $\mathcal B$ on a Lie algebra
$(\frak g,[-,-])$ is a symmetric bilinear form such that
\begin{equation}\label{eq:2-cocycle}
\mathcal{B}([x,y],z)+\mathcal{B}([y,z],x)+\mathcal{B}([z,x],y)=0,\forall
x,y,z\in \frak g.
\end{equation}
\end{defi}

Commutative 2-cocycles appear in the study of non-associative
algebras satisfying certain skew-symmetric identities
(\cite{Dzh09}), and also in the description of the second
cohomology of current Lie algebras (\cite{Zu}). In the
nondegenerate cases, they are the ``symmetric" version of
symplectic forms on Lie algebras, whereas the latter are the
skew-symmetric bilinear forms satisfying Eq.~(\ref{eq:2-cocycle}).

On the other hand, in the symmetric case, there is the {\bf
invariant} bilinear form $\mathcal B$ on a Lie algebra $(\frak g,[-,-])$
in the sense that
\begin{equation}
\mathcal B([x,y],z)=\mathcal B(x,[y,z]),\;\;\forall x,y,z\in \frak g.
\end{equation}
The Killing forms on Lie algebras are the examples of invariant
bilinear forms. Note the invariance and a commutative 2-cocycle
are ``inconsistent" in the sense that $\mathcal B$ is both
invariant and a commutative 2-cocycle if and only if $3\mathcal
B(x,[y,z])=0$ for all $x,y,z\in \frak g$, and hence in particular,
in the nondegenerate case and if the characteristic of the base
field is not 3, it holds if and only if the Lie algebra $(\frak
g,[-,-])$ is abelian.

Furthermore, a class of Lie algebras are obtained from commutative
associative algebras with derivations as follows. Let $P$ be a
derivation on a commutative associative algebra $(A,\cdot)$. Then
$(A,[-,-])$ is a Lie algebra with the bilinear operation
$[-,-]:A\otimes A\rightarrow A$  given by
\begin{equation}\label{eq:Lie algebras form differential commutative associative algebras}
[x,y]=P(x)\cdot y-x\cdot P(y),\forall x,y\in A.
\end{equation}
Such Lie algebras are called Witt type Lie algebras (\cite{SXZ,Xu}) and
provide ``typical examples" in
several topics in mathematics, such as Jacobi algebras
(\cite{AM}), generalized Poisson algebras (\cite{CK}),
$F$-manifold algebras (\cite{Dot}), contact brackets (\cite{MZ})
and transposed Poisson algebras (\cite{Bai2020}). On the
commutative associative algebra $(A,\cdot)$, there is a natural
``invariance" for symmetric bilinear forms in the sense that
\begin{equation}\label{eq:inv-asso}
\mathcal B(x\cdot y,z)=\mathcal B(x,y\cdot z),\;\;\forall x,y,z\in A.
\end{equation}
In particular, if $(A,\cdot)$ has a nondegenerate symmetric
invariant bilinear form, then it is exactly a symmetric
(commutative) Frobenius algebra (\cite{Fro}). It is natural to
consider the bilinear form on the Lie algebra $(A,[-,-])$ defined
by Eq.~(\ref{eq:Lie algebras form differential commutative
associative algebras}) induced from an invariant bilinear form
$\mathcal B$ on $(A,\cdot)$: in general $\mathcal B$ is not
invariant on $(A,[-,-])$, but a commutative 2-cocycle on
$(A,[-,-])$ (see Corollary~\ref{cor:construction}).
Hence the study of commutative 2-cocycles deserves
attracting more attention.
It is known that there is an underlying algebraic structure on a
symplectic form on a Lie algebra, namely, a pre-Lie algebra, that
is, a pre-Lie algebra can be induced from a symplectic form on a
Lie algebra (\cite{Chu}). Pre-Lie algebras arose from the study of
affine manifolds and affine structures on Lie groups (\cite{Ko}),
convex homogeneous cones (\cite{V}) and deformations and
cohomologies of associative algebras (\cite{Ger}) and then appear
in many fields in mathematics and physics (\cite{Bur}). Similarly,
we introduce a new kind of algebraic structures, called
anti-pre-Lie algebras, as the underlying algebraic structures of
nondegenerate commutative 2-cocycles on Lie algebras, that is,
there are anti-pre-Lie algebras induced from the latter.

Besides this ``similarity", anti-pre-Lie algebras have some other
properties which are analogue to the ones of pre-Lie algebras. In
fact, anti-pre-Lie algebras are characterized as a class of
Lie-admissible algebras whose negative left multiplication
operators make representations of the commutator Lie algebras,
whereas pre-Lie algebras are a class of Lie-admissible algebras
whose left multiplication operators make representations of the
commutator Lie algebras, justifying the notion of anti-pre-Lie
algebras. Consequently there are the constructions of
nondegenerate commutative 2-cocycles and symplectic forms on
semi-direct product Lie algebras from anti-pre-Lie algebras and
pre-Lie algebras (\cite{Ku1}) respectively. We introduce the
notion of an anti-$\mathcal O$-operator on a Lie algebra to
interpret anti-pre-Lie algebras, motivating from the notion of an
$\mathcal O$-operator introduced in \cite{Kup} as a natural
generalization of the classical Yang-Baxter equation in a Lie
algebra which is used to interpret pre-Lie algebras
(\cite{Bai2007}). The analogues also appear in the constructions
of examples from linear functions and symmetric bilinear forms
(\cite{Bai2004,SS}). On the other hand, an obvious difference
between the anti-pre-Lie algebras and the pre-Lie algebras is that
over the field of characteristic zero, the sub-adjacent Lie
algebras of the former can be simple (Example \ref{ex:anti-pre-Lie
algebra from sl(2)}), whereas there is not a compatible pre-Lie
algebra structure on a simple Lie algebra (\cite{Me}).

Furthermore, it is unexpected that a subclass of anti-pre-Lie
algebras, namely, admissible Novikov algebras, correspond to
Novikov algebras which are a subclass of pre-Lie algebras in terms
of $q$-algebras (\cite{Dzh09}). Novikov algebras were introduced
in connection with Hamiltonian operators in the formal variational
calculus (\cite{Gel}) and Poisson brackets of hydrodynamic type
(\cite{Bal}), independently from the study of pre-Lie algebras
themselves. They also correspond to certain vertex algebras and
Virasoro-like Lie algebras (\cite{BLP}). The relationship among
anti-pre-Lie algebras, admissible Novikov algebras, Novikov
algebras and pre-Lie algebras is summarized as follows.

$$\mbox{\{pre-Lie\} $\hookleftarrow$ \{Novikov\}
$\overset{2-algebra}{\underset{(-2)-algebra}{\rightleftharpoons}}$
\{admissible Novikov\} $\hookrightarrow$ \{anti-pre-Lie\}.}$$

The correspondence  between admissible Novikov algebras and
Novikov algebras provides a construction of anti-pre-Lie algebras
(admissible Novikov algebras) from examples of Novikov algebras,
in particular, from commutative associative algebras with
derivations (\cite{Bal,Fil1989,Xu1997}), or more generally,
admissible pairs. Such a construction especially coincides with
the anti-pre-Lie algebra structure induced from the aforementioned
nondegenerate commutative 2-cocycle on the Lie algebra $(A,[-,-])$
defined by Eq. (\ref{eq:Lie algebras form differential commutative
associative algebras}) under certain conditions.

Another consequence from the correspondence between admissible
Novikov algebras and Novikov algebras is that motivated from the
notion of Novikov-Poisson algebras (\cite{Xu1997}), the
correspondence is extended to the level of Poisson type structures
and hence we introduce the notions of admissible Novikov-Poisson
algebras and further anti-pre-Lie Poisson algebras. 
 Taking the
sub-adjacent Lie algebra of the anti-pre-Lie algebra in an
anti-pre-Lie Poisson algebra gives a transposed Poisson algebra
(\cite{Bai2020}) and conversely, a transposed Poisson algebra with
a nondegenerate symmetric bilinear form which is invariant on the
commutative associative algebra and a commutative 2-cocycle on the
Lie algebra induces an anti-pre-Lie Poisson algebra.
Moreover, there is also a tensor theory for
anti-pre-Lie Poisson algebras.

This paper is organized as follows.
In Section 2,  we first introduce the notion of anti-pre-Lie
algebras as a class of Lie-admissible algebras whose negative left
multiplication operators make  representations of the commutator
Lie algebras. We give some examples and the classification in
dimension 2. Then we introduce the notion of
anti-$\mathcal{O}$-operators to interpret anti-pre-Lie algebras.
There is a close relationship between anti-pre-Lie algebras and
commutative 2-cocycles on Lie algebras. That is, the former can be
induced from the latter in the nondegenerate case, whereas there
is a natural construction of the latter on the semi-direct product
Lie algebras induced from the former. Finally we summarize these
results to exhibit a clear analogy between anti-pre-Lie algebras
and pre-Lie algebras. In Section 3, we first introduce the notion
of admissible Novikov algebras as a subclass of anti-pre-Lie
algebras, corresponding to Novikov algebras in terms of
$q$-algebras. Then we give the constructions of Novikov algebras
and the corresponding admissible Novikov algebras from commutative
associative algebras with admissible pairs including derivations.
Finally we extend the correspondence between Novikov algebras and
admissible Novikov algebras to the level of Poisson type
structures, and hence introduce the notions of anti-pre-Lie
Poisson algebras and admissible Novikov-Poisson algebras. The
relationships with transposed Poisson algebras and Novikov-Poisson
algebras as well as a tensor theory are given.


Throughout this paper, unless otherwise specified, all vector spaces are assumed to be
finite-dimensional over a field $\mathbb{F}$ of characteristic 0.

\section{Anti-pre-Lie algebras, anti-$\mathcal{O}$-operators and commutative 2-cocycles on Lie algebras}

We introduce the notion of anti-pre-Lie algebras and exhibit their
properties in several aspects, giving a clear analogy between them
and pre-Lie algebras.



\subsection{Notions and examples of anti-pre-Lie algebras}

We introduce the notion of anti-pre-Lie algebras as a class of
Lie-admissible algebras whose negative left multiplication
operators make  representations of the commutator Lie algebras. We
give some basic examples as well as the construction of
anti-pre-Lie algebras from linear functions. The classification of
2-dimensional complex non-commutative anti-pre-Lie algebras is
also given.

\begin{defi}\label{defi:anti-pre-Lie algebras}
Let $A$ be a vector space with a bilinear operation $\circ: A\otimes A\rightarrow A$. $(A,\circ)$ is called an
\textbf{anti-pre-Lie algebra} if the following equations are
satisfied:
\begin{equation}\label{eq:defi:anti-pre-Lie algebras1}
x\circ(y\circ z)-y\circ(x\circ z)=[y,x]\circ z,
\end{equation}
\begin{equation}\label{eq:defi:anti-pre-Lie algebras2}
[x,y]\circ z+[y,z]\circ x+[z,x]\circ y=0,
\end{equation}
where
\begin{equation}\label{eq:defi:anti-pre-Lie algebras3}
[x,y]=x\circ y-y\circ x,
\end{equation}
for all $x,y,z\in A$.
\end{defi}

Recall that $(A,\circ)$ is called a \textbf{Lie-admissible
algebra}, where $A$ is a vector space with a bilinear operation
$\circ: A\otimes A\rightarrow A$, if the operation $[-,-]:A\otimes
A\rightarrow A$ given by Eq.~(\ref{eq:defi:anti-pre-Lie
algebras3}) makes $(A,[-,-])$ a Lie algebra. In this case,
$(A,[-,-])$ is called the \textbf{sub-adjacent Lie algebra} of
$(A,\circ)$ which is denoted by $(\frak g(A),[-,-])$ and
$(A,\circ)$ is called a {\bf compatible} (Lie-admissible) algebra
structure on the Lie algebra $(\frak g(A),[-,-])$.

We have the following characterization of anti-pre-Lie algebras.

\begin{pro}\label{pro:anti-pre-Lie is Lie-admissible}
Let $A$ be a vector space with a bilinear operation $\circ:
A\otimes A\rightarrow A$. Then the following statements are
equivalent:
\begin{enumerate} \item $(A,\circ)$ is an anti-pre-Lie algebra.

\item For $(A,\circ)$,  Eq.~(\ref{eq:defi:anti-pre-Lie algebras1})
and the following equation hold:
\begin{equation}\label{eq:defi:anti-pre-Lie algebras5}
x\circ[y,z]+y\circ[z,x]+z\circ[x,y]=0, \forall x,y,z\in A.
\end{equation}

\item $(A,\circ)$ is a Lie-admissible algebra such that
$(-\mathcal{L}_{\circ},A)$ is a representation of the sub-adjacent
Lie algebra $(\frak g (A),[-,-])$, where
$\mathcal{L}_{\circ}:\frak g(A)\rightarrow \mathrm{End}(A)$ is a
linear map defined by $\mathcal{L}_{\circ}(x)y=x\circ y,\forall
x,y\in A$.
\end{enumerate}
\end{pro}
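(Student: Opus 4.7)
The plan is to first decode what each statement says directly in terms of the operation $\circ$. Condition (3) has two parts: that $[x,y] = x\circ y - y\circ x$ satisfies the Jacobi identity (Lie-admissibility) and that $x \mapsto -\mathcal L_\circ(x)$ is a Lie algebra homomorphism. Evaluating the latter on any $z \in A$ gives $-[x,y]\circ z = x\circ(y\circ z) - y\circ(x\circ z)$, which, using $[y,x] = -[x,y]$, is precisely \eqref{eq:defi:anti-pre-Lie algebras1}. Thus (3) is equivalent to ``\eqref{eq:defi:anti-pre-Lie algebras1} together with the Jacobi identity for $[-,-]$,'' and the rest of the argument amounts to pinpointing the logical relationship between Jacobi, \eqref{eq:defi:anti-pre-Lie algebras2}, and \eqref{eq:defi:anti-pre-Lie algebras5} under the standing assumption \eqref{eq:defi:anti-pre-Lie algebras1}.

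I would then carry out two short reductions. First, assuming \eqref{eq:defi:anti-pre-Lie algebras1}, substitute $[x,y]\circ z = y\circ(x\circ z) - x\circ(y\circ z)$ and its cyclic analogues into the left-hand side of \eqref{eq:defi:anti-pre-Lie algebras2}; regrouping the six resulting terms by their outer factor and recognizing the inner differences as brackets shows this sum equals $-\bigl(\text{LHS of }\eqref{eq:defi:anti-pre-Lie algebras5}\bigr)$, which in particular establishes (1)$\Leftrightarrow$(2). Second, expand the Jacobiator for $[-,-]$ directly in terms of $\circ$; the six resulting terms split cleanly as
\[
[[x,y],z]+[[y,z],x]+[[z,x],y] \;=\; \bigl(\text{LHS of }\eqref{eq:defi:anti-pre-Lie algebras2}\bigr) - \bigl(\text{LHS of }\eqref{eq:defi:anti-pre-Lie algebras5}\bigr).
\]
Combining these two reductions, under \eqref{eq:defi:anti-pre-Lie algebras1} the Jacobi identity is equivalent to $2\cdot\bigl(\text{LHS of }\eqref{eq:defi:anti-pre-Lie algebras2}\bigr)=0$, hence (since $\mathrm{char}\,\mathbb F = 0$) equivalent to \eqref{eq:defi:anti-pre-Lie algebras2} itself.

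With these two reductions in hand the remaining implications are immediate: for (1)$\Rightarrow$(3), both \eqref{eq:defi:anti-pre-Lie algebras2} and the derived \eqref{eq:defi:anti-pre-Lie algebras5} hold, so Jacobi holds by the second reduction and \eqref{eq:defi:anti-pre-Lie algebras1} gives the representation condition; for (3)$\Rightarrow$(1), Lie-admissibility plus \eqref{eq:defi:anti-pre-Lie algebras1} forces \eqref{eq:defi:anti-pre-Lie algebras2} via the factor-of-$2$ argument. The main obstacle is purely bookkeeping, namely keeping the cyclic sums and the sign swap $[y,x] = -[x,y]$ in \eqref{eq:defi:anti-pre-Lie algebras1} straight while regrouping. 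The one conceptually non-trivial point worth flagging is that (3)$\Rightarrow$(1) genuinely needs characteristic $\neq 2$: without it, \eqref{eq:defi:anti-pre-Lie algebras1} together with Jacobi yields only $2\cdot\bigl(\text{LHS of }\eqref{eq:defi:anti-pre-Lie algebras2}\bigr)=0$, not \eqref{eq:defi:anti-pre-Lie algebras2} itself.
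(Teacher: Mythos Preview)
Your proposal is correct and follows essentially the same route as the paper: both arguments use \eqref{eq:defi:anti-pre-Lie algebras1} to show that the left-hand side of \eqref{eq:defi:anti-pre-Lie algebras5} equals the negative of the left-hand side of \eqref{eq:defi:anti-pre-Lie algebras2}, then decompose the Jacobiator as the difference of these two sums, and finally combine the two identities to recover \eqref{eq:defi:anti-pre-Lie algebras2} from Lie-admissibility via the factor-of-$2$ argument. Your explicit remark that (3)$\Rightarrow$(1) requires characteristic $\neq 2$ is a point the paper leaves implicit (it works over characteristic $0$ throughout), but otherwise the two proofs coincide.
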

\begin{proof}
$(1)\Longleftrightarrow (2)$. Let $x,y,z\in A$. Suppose that
Eq.~(\ref{eq:defi:anti-pre-Lie algebras1}) holds. Then we have
\begin{eqnarray}
&&x\circ[y,z]+y\circ[z,x]+z\circ[x,y]\nonumber\\
&&\stackrel{(\ref{eq:defi:anti-pre-Lie algebras3})}{=}
x\circ(y\circ z)-x\circ(z\circ y)+y\circ(z\circ x)-y\circ(x\circ z)+z\circ(x\circ y)-z\circ(y\circ x)\nonumber\\
&&\stackrel{(\ref{eq:defi:anti-pre-Lie algebras1})}{=}[y,x]\circ
z+[z,y]\circ x+[x,z]\circ y.\label{eq:equal}
\end{eqnarray}
Thus Eq.~(\ref{eq:defi:anti-pre-Lie algebras2}) holds if and only
if Eq.~(\ref{eq:defi:anti-pre-Lie algebras5}) holds.

$(1)\Longleftrightarrow (3)$. Let $x,y,z\in A$. Suppose that
$(A,\circ)$ is an anti-pre-Lie algebra. Then
Eqs.~(\ref{eq:defi:anti-pre-Lie algebras2}) and
(\ref{eq:defi:anti-pre-Lie algebras5}) hold. Hence we have
\begin{eqnarray*}
&&[[x,y],z]+[[y,z],x]+[[z,x],y]\\
&&=([x,y]\circ z+[y,z]\circ x+[z,x]\circ y)-(z\circ [x,y]+x\circ
[y,z]+y\circ [z,x])=0.
\end{eqnarray*}
So $(A,[-,-])$ is a Lie algebra and thus $(A,\circ)$ is
Lie-admissible. Moreover, by Eq.~(\ref{eq:defi:anti-pre-Lie
algebras1}), we have
\begin{equation}\label{eq:repn}\mathcal{L}_{\circ}(x)\mathcal{L}_{\circ}(y)-\mathcal{L}_{\circ}(y)\mathcal{L}_{\circ}(x)=-\mathcal{L}_{\circ}([x,y]),\;\;\forall x,y\in A.\end{equation}
Hence $(-\mathcal{L}_{\circ},A)$ is a representation of the Lie
algebra $(\frak g(A) ,[-,-])$. Conversely, suppose that $(A,\circ)$ is a
Lie-admissible algebra such that $(-\mathcal{L}_{\circ},A)$ is a
representation of the Lie algebra $(\frak g(A),[-,-])$. Note that
in this case, Eq.~(\ref{eq:repn}) holds, and thus equivalently,
Eq.~(\ref{eq:defi:anti-pre-Lie algebras1}) holds. Moreover,
Eq.~(\ref{eq:equal}) holds, too. Furthermore, since
 $(A,\circ)$ is a
Lie-admissible algebra, we have
$$[x,y]\circ z+[y,z]\circ x+[z,x]\circ y=z\circ [x,y]+x\circ
[y,z]+y\circ [z,x],\;\;\forall x,y,z\in A.$$ Therefore by
Eq.~(\ref{eq:equal}), Eqs.~(\ref{eq:defi:anti-pre-Lie algebras2})
and (\ref{eq:defi:anti-pre-Lie algebras5}) hold and thus
 $(A,\circ)$ is an anti-pre-Lie algebra.
\end{proof}

\begin{rmk}
Recall (\cite{Bur}) that $(A,\star)$ is called a \textbf{pre-Lie
algebra}, where $A$ is a vector space with a bilinear operation
$\star: A\otimes A\rightarrow A$, if
\begin{equation}\label{eq:defi:pre-Lie algebras}
(x\star y)\star z-x\star(y\star z)=(y\star x)\star z-y\star(x\star
z),\;\;\forall x,y,z\in A.
\end{equation}
That is, a pre-Lie algebra $(A,\star)$ is a Lie-admissible algebra
such that $(\mathcal{L}_{\star},A)$ is a representation of the
sub-adjacent Lie algebra $(\frak g(A),[-,-])$. Therefore the
notion of anti-pre-Lie algebras is justified since an anti-pre-Lie
algebra $(A,\circ)$ is a Lie-admissible algebra such that
$(-\mathcal{L}_{\circ},A)$ is a representation of the sub-adjacent
Lie algebra $(\frak g(A),[-,-])$. Note that for a pre-Lie algebra
$(A,\star)$, Eq.~(\ref{eq:defi:pre-Lie algebras}) is enough to
make the Jacobi identity for the sub-adjacent Lie algebra $(\frak
g(A),[-,-])$ hold automatically, whereas for an anti-pre-Lie
algebra $(A,\circ)$, only Eq.~(\ref{eq:defi:anti-pre-Lie
algebras1}) cannot do so and hence the additional
Eq.~(\ref{eq:defi:anti-pre-Lie algebras2}) or
Eq.~(\ref{eq:defi:anti-pre-Lie algebras5}) is needed.
\end{rmk}

\begin{rmk} Let $A$ be a vector space with a bilinear operation $\circ:
A\otimes A\rightarrow A$. $(A,\circ)$ is called {\bf two-sided
Alia} (\cite{Dzh09}) if Eqs.~(\ref{eq:defi:anti-pre-Lie
algebras2}) and (\ref{eq:defi:anti-pre-Lie algebras5}) hold.
Two-sided Alia algebras are Lie-admissible algebras and they play
an important role in the study of non-associative algebras
satisfying certain skew-symmetric identities of degree 3 (see
\cite{Dzh09,Dzh092} for more details). An anti-pre-Lie algebra is
a two-sided Alia algebra satisfying Eq.~(\ref{eq:defi:anti-pre-Lie
algebras1}).

\end{rmk}


\begin{ex} Let $A$ be a vector space with a bilinear operation $\circ:
A\otimes A\rightarrow A$. Define the
\textbf{anti-associator} as
\begin{equation}\label{eq:defi:anti-associator}
(x,y,z)_{aa}=(x\circ y)\circ z+x\circ(y\circ z),\;\; \forall x,y,z\in A.
\end{equation}
Then Eq.~(\ref{eq:defi:anti-pre-Lie
algebras1}) holds if and only if
$$(x,y,z)_{aa}=(y,x,z)_{aa},\;\;\forall x,y,z\in A,$$
that is, the anti-associator is \textbf{left-symmetric} in the
first two variables $x,y$. On the other hand, $(A,\circ)$ is
called \textbf{anti-associative} (\cite{Oku}) if for all $x,y,z\in
A$,  $(x,y,z)_{aa}=0$. Obviously, an anti-associative algebra is
an anti-pre-Lie algebra if and only if it is a Lie-admissible
algebra, that is, Eq.~(\ref{eq:defi:anti-pre-Lie algebras2})
holds. In particular, 
$(A,\circ)$ is both associative
and anti-associative if and only if $(A,\circ)$ is a 2-step
nilpotent associative algebra, that is,
$$x\circ(y\circ z)=(x\circ y)\circ z=0,\;\;\forall x,y,z\in A.$$
In this case, $(A,\circ)$ is an anti-pre-Lie algebra.
\end{ex}


\begin{pro}\label{pro:comm}
 Let $A$ be a vector space with a bilinear operation $\circ:
A\otimes A\rightarrow A$. Suppose that the operation is
commutative, that is,
$$x\circ y=y\circ x,\;\;\forall x,y\in A.$$
Then  $(A,\circ)$ is an anti-pre-Lie algebra if and only if
$(A,\circ)$ is associative.
\end{pro}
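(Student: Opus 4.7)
The plan is to exploit the fact that commutativity of $\circ$ forces the associated bracket $[x,y]=x\circ y-y\circ x$ to vanish identically, so that the sub-adjacent structure collapses and the two defining identities of an anti-pre-Lie algebra degenerate to something much more tractable.

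First I would observe that under the commutativity hypothesis, $[x,y]=0$ for all $x,y\in A$. This immediately makes Eq.~\eqref{eq:defi:anti-pre-Lie algebras2} hold trivially (every term vanishes), and reduces Eq.~\eqref{eq:defi:anti-pre-Lie algebras1} to the single identity
\begin{equation*}
x\circ(y\circ z)=y\circ(x\circ z),\qquad \forall x,y,z\in A.
\end{equation*}
So being an anti-pre-Lie algebra, under the commutativity assumption, is equivalent to this last identity alone.

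Next I would show this identity, together with commutativity, is equivalent to associativity. For the nontrivial direction, suppose $(A,\circ)$ is commutative and satisfies the displayed identity. Then, using commutativity and the identity repeatedly,
\begin{equation*}
(x\circ y)\circ z \;=\; z\circ(x\circ y) \;=\; x\circ(z\circ y) \;=\; x\circ(y\circ z),
\end{equation*}
which is associativity. Conversely, if $(A,\circ)$ is commutative and associative, then $x\circ(y\circ z)=(x\circ y)\circ z=(y\circ x)\circ z=y\circ(x\circ z)$, so the displayed identity holds.

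Combining the two steps yields the equivalence. There is no real obstacle here; the only subtlety is noticing that Eq.~\eqref{eq:defi:anti-pre-Lie algebras2} is automatically satisfied under commutativity, so one does not need to verify it separately, and all the work reduces to the three-line associativity computation above.
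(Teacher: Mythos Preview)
Your proof is correct and follows essentially the same approach as the paper: both note that commutativity forces $[x,y]=0$, making Eq.~\eqref{eq:defi:anti-pre-Lie algebras2} trivial, and then reduce Eq.~\eqref{eq:defi:anti-pre-Lie algebras1} to associativity via commutativity. The paper compresses your two-step argument into a single line by directly rewriting $x\circ(y\circ z)-y\circ(x\circ z)-[y,x]\circ z$ as the associator $(y\circ z)\circ x-y\circ(z\circ x)$, but the content is the same.
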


\begin{proof} Obviously
Eq.~(\ref{eq:defi:anti-pre-Lie algebras2}) holds. Moreover,
$$x\circ(y\circ z)-y\circ(x\circ z)-[y,x]\circ z=(y\circ z)\circ x-y\circ(z\circ x),\forall x,y,z\in A.$$
Therefore Eq.~(\ref{eq:defi:anti-pre-Lie algebras1}) holds if and
only if $(A,\circ)$ is associative. Hence the conclusion follows.
\end{proof}

\begin{pro}\label{pro:anti-pre-Lie algebras from linear functions}
Let $A$ be a vector space of dimension $n\geq 2$, and
$f,g:A\rightarrow\mathbb{F}$ be two linear functions. Define a
bilinear operation $\circ:A\otimes A\rightarrow A$ by
\begin{equation}\label{eq:anti-pre-Lie algebras from linear functions}
x\circ y=f(y)x+g(x)y, \forall x,y\in A.
\end{equation}
Then $(A,\circ)$ is an anti-pre-Lie algebra if and only if $f=0$
or $g=2f$.
\end{pro}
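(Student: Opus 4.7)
The plan is to substitute the explicit formula $x\circ y = f(y)x + g(x)y$ into the two defining axioms of an anti-pre-Lie algebra and read off the resulting conditions on $(f,g)$. Writing $h := f - g$, a direct computation gives $[x,y] = h(y)x - h(x)y$.

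First I would observe that Eq.~\eqref{eq:defi:anti-pre-Lie algebras2} holds automatically for \emph{every} pair $(f,g)$: expanding $[x,y]\circ z + [y,z]\circ x + [z,x]\circ y$ via the definition of $\circ$, the coefficient of each of $x$, $y$, $z$ is a cyclic sum in $f$ and $g$ that collapses to zero after using $h = f-g$. Hence the entire content of the statement is contained in Eq.~\eqref{eq:defi:anti-pre-Lie algebras1}.

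Next I would expand both sides of~\eqref{eq:defi:anti-pre-Lie algebras1}. The left-hand side simplifies to $x\circ(y\circ z) - y\circ(x\circ z) = f(z)\bigl(f(y)x - f(x)y\bigr)$, while $[y,x]\circ z = f(z)\bigl(h(x)y - h(y)x\bigr) + \bigl(h(x)g(y) - h(y)g(x)\bigr)z$. Setting $\phi := 2f - g = f + h$, the axiom~\eqref{eq:defi:anti-pre-Lie algebras1} is equivalent to
\begin{equation}
f(z)\bigl(\phi(y)x - \phi(x)y\bigr) = \bigl(h(x)g(y) - h(y)g(x)\bigr)z, \quad \forall\, x,y,z \in A. \tag{$\ast$}
\end{equation}
The ``if'' direction is now transparent: if $f = 0$ then $h = -g$ and $h(x)g(y) - h(y)g(x) = 0$, so both sides of $(\ast)$ vanish; if $g = 2f$ then $\phi = 0$ and $h = -f$, and again both sides vanish.

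For the ``only if'' direction, suppose $f \neq 0$ and choose $z_0 \in A$ with $f(z_0) = 1$. Specializing $(\ast)$ at $z = y = z_0$ yields
$$\phi(z_0)\,x \;=\; \bigl[\phi(x) + h(x)g(z_0) - h(z_0)g(x)\bigr]\,z_0, \qquad \forall\, x \in A.$$
The hypothesis $\dim A \geq 2$ enters precisely here: picking any $x \notin \mathrm{span}(z_0)$, the left-hand side lies in $\mathrm{span}(x)$ and the right-hand side in $\mathrm{span}(z_0)$, so linear independence forces both to vanish. This gives $\phi(z_0) = 0$ (hence $g(z_0) = 2$ and $h(z_0) = -1$) together with the scalar identity $\phi(x) + h(x)g(z_0) - h(z_0)g(x) = 0$ for every $x$; substituting $h = f - g$ and the known values of $g(z_0), h(z_0)$ reduces this to $g(x) = 2f(x)$, i.e.\ $g = 2f$. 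The only nontrivial point in the whole argument is this separation-of-variables step on the right-hand side of $(\ast)$, which is where the dimension hypothesis $n \geq 2$ is really used; everything else is linear bookkeeping.
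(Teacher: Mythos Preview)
Your proof is correct and follows the same overall strategy as the paper: verify that Eq.~\eqref{eq:defi:anti-pre-Lie algebras2} is automatic, reduce Eq.~\eqref{eq:defi:anti-pre-Lie algebras1} to a single identity in $f,g$ (your $(\ast)$ is equivalent to the paper's Eq.~\eqref{eq:anti-pre-Lie algebra from linear function1}), and then extract the conditions on $(f,g)$.

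Where you differ is in the ``only if'' direction. The paper splits into two cases: for $\dim A\geq 3$ it picks $y$ linearly independent from both $x$ and $z$ and reads off $(2f-g)(x)f(z)=0$ directly from the coefficient of $y$; for $\dim A=2$ it instead fixes a basis and solves a small system of four scalar equations by hand. Your argument is more economical: by specializing $y=z=z_0$ with $f(z_0)=1$ and then choosing a single $x\notin\mathrm{span}(z_0)$, you force $\phi(z_0)=0$ and the scalar identity simultaneously, and this works uniformly for every $n\geq 2$ without a case split. (One small point you glossed over: the scalar identity is only \emph{derived} for $x\notin\mathrm{span}(z_0)$; you should note that it also holds at $x=z_0$ since $\phi(z_0)=0$, and hence everywhere by linearity.) The paper's approach is more pedestrian but perhaps makes the role of the dimension more transparent in the $n\geq 3$ case; yours is shorter and avoids the somewhat tedious $2$-dimensional computation.
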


\begin{proof}
For all $x,y,z\in A$, we have
\begin{eqnarray*}
&&[y,x]\circ z
=f(x)f(z)y-g(x)f(z)y+f(x)g(y)z-f(y)g(x)z+g(y)f(z)x-f(y)f(z)x.
\end{eqnarray*}
Then Eq.~(\ref{eq:defi:anti-pre-Lie algebras2}) holds.
Moreover, 
Eq.~(\ref{eq:defi:anti-pre-Lie algebras1}) holds if and only
if
\begin{equation}\label{eq:anti-pre-Lie algebra from linear function1}
2f(x)f(z)y-g(x)f(z)y+f(x)g(y)z-f(y)g(x)z+g(y)f(z)x-2f(y)f(z)x=0,\;\;\forall
x,y,z\in A.
\end{equation}
\begin{enumerate}
\item Suppose that $\mathrm{dim}A\geq 3$. Then for any $x,z\in A$, there is a vector $y\in A$ that is linearly independent of $x$ and $z$. Hence by Eq.~(\ref{eq:anti-pre-Lie algebra from
linear function1}), we have $(2f-g)(x)f(z)=0, \forall x,z\in A$. Thus $f=0$ or $g=2f$.
\item Suppose that $\mathrm{dim}A=2$. Let $\{ e_1,e_2\}$ be a basis of $A$. Then Eq.~(\ref{eq:anti-pre-Lie algebra from
linear function1}) holds if and only if 
\begin{equation}\label{eq:A}
(2f(e_{1})-g(e_{1}))f(e_{1})=0,
\end{equation}
\begin{equation}\label{eq:B}
(2f(e_{2})-g(e_{2}))f(e_{2})=0,
\end{equation}
\begin{equation}\label{eq:C}
2f(e_{1})(g(e_{2})-f(e_{2}))-f(e_{2})g(e_{1})=0,
\end{equation}
\begin{equation}\label{eq:D}
2f(e_{2})(g(e_{1})-f(e_{1}))-f(e_{1})g(e_{2})=0.
\end{equation}
\begin{itemize}
\item[(i)] If $f(e_{1})\neq 0$, then by Eqs.~(\ref{eq:A}) and (\ref{eq:D}), we have $g(e_{1})=2f(e_{1}), g(e_{2})=2f(e_{2})$. Thus $g=2f$.
\item[(ii)] If $f(e_{1})=0$, then by Eq.~(\ref{eq:D}), we have $f(e_{2})g(e_{1})=0$.
\begin{itemize}
\item[(ii)-(a)] If $f(e_{2})=0$, then $f=0$.
\item[(ii)-(b)] If $f(e_{2})\neq 0$, then $g(e_{1})=0$. By Eq.~(\ref{eq:B}) we have $g(e_{2})=2f(e_{2})$. Thus $g=2f$.
\end{itemize}
\end{itemize}
\end{enumerate}
It is straightforward that Eq.~(\ref{eq:anti-pre-Lie
algebra from linear function1}) holds when $f=0$ or $g=2f$. Hence
the conclusion holds.
\end{proof}

\begin{cor}\label{cor:class}
With the conditions in Proposition~\ref{pro:anti-pre-Lie algebras
from linear functions}, we have the following results.
\begin{enumerate}
\item \label{it:f1}If $f=0$ and $g\ne 0$, then there exists a
basis $\{e_1,\cdots, e_n\}$ in $A$ such that the non-zero products
are given by
$$e_1\circ e_i=e_i, i=1,\cdots, n.$$
\item  \label{it:f2}If $f\ne 0$ and $g=2f$, then there exists a
basis $\{e_1,\cdots, e_n\}$ in $A$ such that the non-zero products
are given by
$$e_1\circ e_1=3e_1,\;\;e_1\circ e_i=2e_i,\;\;e_i\circ e_1=e_{i},\;\;i=2,\cdots, n.$$
\item  \label{it:f3} If $f=g=0$, then $A$ is trivial, that is, all
products are zero.
\end{enumerate}

\end{cor}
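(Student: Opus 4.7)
The plan is to handle each case by choosing a basis adapted to the nonzero linear functional(s), turning Eq.~(\ref{eq:anti-pre-Lie algebras from linear functions}) into the stated multiplication table by a direct computation.

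For case~(\ref{it:f1}), where $f=0$ and $g\ne 0$, the operation reduces to $x\circ y=g(x)y$. Since $g$ is a nonzero linear functional on an $n$-dimensional space, $\ker g$ has dimension $n-1$. Pick $e_{1}\in A$ with $g(e_{1})=1$ and choose $e_{2},\dots,e_{n}$ to be any basis of $\ker g$. Then plugging into $x\circ y=g(x)y$ immediately gives $e_{1}\circ e_{i}=e_{i}$ for $i=1,\dots,n$ and $e_{i}\circ e_{j}=0$ whenever $i\geq 2$, yielding the asserted multiplication table.

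For case~(\ref{it:f2}), where $f\ne 0$ and $g=2f$, the operation becomes $x\circ y=f(y)x+2f(x)y$. Again $\ker f$ has dimension $n-1$, so one can pick $e_{1}$ with $f(e_{1})=1$ and a basis $e_{2},\dots,e_{n}$ of $\ker f$. A direct substitution then gives
\begin{align*}
e_{1}\circ e_{1}&=f(e_{1})e_{1}+2f(e_{1})e_{1}=3e_{1},\\
e_{1}\circ e_{i}&=f(e_{i})e_{1}+2f(e_{1})e_{i}=2e_{i},\quad i\geq 2,\\
e_{i}\circ e_{1}&=f(e_{1})e_{i}+2f(e_{i})e_{1}=e_{i},\quad i\geq 2,\\
e_{i}\circ e_{j}&=f(e_{j})e_{i}+2f(e_{i})e_{j}=0,\quad i,j\geq 2,
\end{align*}
which matches the claimed products. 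Case~(\ref{it:f3}) is immediate from Eq.~(\ref{eq:anti-pre-Lie algebras from linear functions}).

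There is essentially no obstacle here: the content is just a normal-form result for a single (or a pair of proportional) linear functional(s) under change of basis, and the only thing to verify is that the candidate bases produce precisely the listed nonzero products, which is a one-line substitution in each case.
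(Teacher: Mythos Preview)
Your proof is correct and follows essentially the same approach as the paper: both arguments choose a basis adapted to the nonzero linear functional (one vector on which it takes the value $1$, together with a basis of its kernel) and then verify the multiplication table by direct substitution into Eq.~(\ref{eq:anti-pre-Lie algebras from linear functions}). Your write-up is in fact more explicit than the paper's, which simply states the choice of basis and leaves the verification to the reader.
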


\begin{proof}
For any non-zero linear function $f:A\rightarrow \mathbb F$, since
$A={\rm Ker} f\oplus f(A)={\rm Ker} f\oplus \mathbb F$, there
exists a basis  $\{e_1,\cdots, e_n\}$ of $A$ such that $f(e_1)=1,
f(e_i)=0, i=2,\cdots,n$. Hence Case (\ref{it:f1}) follows (also
see \cite{Bai2004}) by taking $g(e_{1})=1, g(e_{i})=0, i=2,
\cdots, n$ and Case (\ref{it:f2}) follows by taking $f(e_{1})=1,
f(e_{i})=0, i=2, \cdots, n$. Case (\ref{it:f3}) is obvious.
\end{proof}

\begin{rmk}
In fact, by \cite{Bai2004}, the assumption defined by
Eq.~(\ref{eq:anti-pre-Lie algebras from linear functions}) is
equivalent to the fact that for any $x,y\in A$, $x\circ y$ is
still in the subspace spanned by $x$ and $y$. Moreover,
Eq.~(\ref{eq:anti-pre-Lie algebras from linear functions}) defines
a pre-Lie algebra if and only if $f=0$ or $g=0$. So when $f=0$ or
equivalently, in Cases (\ref{it:f1}) and (\ref{it:f3}) in
Corollary~\ref{cor:class},
$(A,\circ)$ is both a pre-Lie algebra and an anti-pre-Lie algebra.
Furthermore, in these cases, $(A,\circ)$ is associative.
\end{rmk}




At the end of this subsection, we consider the classification of
complex anti-pre-Lie algebras in dimension 2. By
Proposition~\ref{pro:comm}, the commutative anti-pre-Lie algebras
are commutative associative algebras whose classification is
known (cf.~\cite{Bai2001} or~\cite{Burde1998}). So we give the classification of
2-dimensional complex non-commutative anti-pre-Lie algebras as
follows.

\begin{pro}\label{thm:classification}
Let $(A,\circ)$ be a 2-dimensional non-commutative anti-pre-Lie algebra over the complex field
$\mathbb{C}$ with a basis $\{e_{1},e_{2}\}$. Then $(A,\circ)$ is isomorphic to one of the following mutually non-isomorphic cases:
\begin{enumerate}

\item[$(A1)_{\mbox{\ }}$] $e_{1}\circ e_{1}=-e_{2}, e_{1}\circ
e_{2}=0, e_{2}\circ e_{1}=-e_{1}, e_{2}\circ e_{2}=0;$

\item[$(A2)_\lambda$] ($\lambda\in\mathbb{C}$)   $e_{1}\circ
e_{1}=0, e_{1}\circ e_{2}=0, e_{2}\circ e_{1}=-e_{1}, e_{2}\circ
e_{2}=\lambda e_{2};$

\item[$(A3)_{\mbox{\ }}$] $e_{1}\circ e_{1}=0, e_{1}\circ e_{2}=0,
e_{2}\circ e_{1}=-e_{1}, e_{2}\circ e_{2}=e_{1}-e_{2};$

\item[$(A4)_\lambda$] ($\lambda\ne -1$)  $e_{1}\circ e_{1}=0,
e_{1}\circ e_{2}=(\lambda+1)e_{1}, e_{2}\circ e_{1}=\lambda e_{1},
e_{2}\circ e_{2}=(\lambda-1)e_{2}$;

\item[$(A5)_{\mbox{\ }}$] $e_{1}\circ e_{1}=0, e_{1}\circ
e_{2}=-e_{1}, e_{2}\circ e_{1}=-2e_{1}, e_{2}\circ
e_{2}=e_{1}-3e_{2}.$

\end{enumerate}
\end{pro}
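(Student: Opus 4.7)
The plan is to exploit the sub-adjacent Lie algebra structure to fix a standard basis and then solve the anti-pre-Lie identities as a polynomial system, finally normalising by the automorphism group of that Lie algebra.

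First I would observe that if $(A,\circ)$ is non-commutative then the bracket $[x,y] = x\circ y - y\circ x$ is non-zero somewhere, so by Proposition~\ref{pro:anti-pre-Lie is Lie-admissible} the sub-adjacent Lie algebra $(\mathfrak{g}(A),[-,-])$ is the unique $2$-dimensional non-abelian Lie algebra over $\mathbb{C}$. Hence there exists a basis $\{e_1,e_2\}$ of $A$ with $[e_1,e_2]=e_1$. Write
\begin{equation*}
e_1\circ e_1=\alpha_1 e_1+\alpha_2 e_2,\quad e_1\circ e_2=\gamma_1 e_1+\gamma_2 e_2,\quad e_2\circ e_1=(\gamma_1-1)e_1+\gamma_2 e_2,\quad e_2\circ e_2=\beta_1 e_1+\beta_2 e_2,
\end{equation*}
where the constraint on $e_2\circ e_1$ enforces $[e_1,e_2]=e_1$.

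Next I would note that Eq.~(\ref{eq:defi:anti-pre-Lie algebras2}) is automatically satisfied: its left-hand side vanishes whenever two of $x,y,z$ coincide (since $[x,x]=0$ and the remaining two terms cancel), and in dimension~$2$ every triple of basis vectors has a repetition. Therefore the entire content of the axioms lies in Eq.~(\ref{eq:defi:anti-pre-Lie algebras1}), which by skew-symmetry in $x,y$ need only be tested for $(x,y)=(e_1,e_2)$ against $z=e_1$ and $z=e_2$. These two vector equations give exactly four polynomial equations in the six unknowns $\alpha_1,\alpha_2,\beta_1,\beta_2,\gamma_1,\gamma_2$. I would then extract and solve this system by branching on whether $\alpha_2$, $\gamma_1$, $\gamma_1-1$, or $\beta_1$ vanish; each branch forces the remaining parameters to lie on a low-dimensional variety, producing a finite collection of one- or two-parameter families.

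The last step is to quotient these families by the change-of-basis group preserving $[e_1,e_2]=e_1$, which is easily computed to consist of the maps $e_1\mapsto a e_1$, $e_2\mapsto be_1+e_2$ for $(a,b)\in\mathbb{C}^\ast\times\mathbb{C}$. Applying such transformations I would normalise the free parameters: scaling by $a$ rescales $\alpha_2$, $\beta_1$, and the $e_1$-components of $e_i\circ e_j$, while shifting by $b$ absorbs the $e_1$-components of $e_2\circ e_2$ and $e_1\circ e_2$ when the corresponding coefficients permit. This is expected to collapse each orbit to one of the five normal forms $(A1),(A2)_\lambda,(A3),(A4)_\lambda,(A5)$. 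The main obstacle, and the most delicate bookkeeping step, is verifying that these five families are pairwise non-isomorphic and that the parameter $\lambda$ in $(A2)_\lambda$ and $(A4)_\lambda$ is a genuine invariant; I would distinguish the orbits using invariants such as the dimensions of $\ker\mathcal{L}_{\circ}(e_i)$, the trace of $\mathcal{L}_{\circ}$ on the derived ideal $\mathbb{C}e_1$, and the value of $e_2\circ e_2$ modulo the ideal $\mathbb{C}e_1$, ruling out any residual automorphisms that could identify distinct $\lambda$'s.
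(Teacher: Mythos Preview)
Your proposal is correct and follows essentially the same approach as the paper: fix a basis with $[e_1,e_2]=e_1$ using the sub-adjacent Lie algebra, observe that Eq.~(\ref{eq:defi:anti-pre-Lie algebras2}) is automatic in dimension~$2$, reduce Eq.~(\ref{eq:defi:anti-pre-Lie algebras1}) to a polynomial system via the two nontrivial instances, solve by case branching, and normalise by the automorphism group $e_1\mapsto ae_1,\ e_2\mapsto be_1+e_2$. The paper branches first on your $\gamma_2$ (its $b_{12}=b_{21}$) rather than on $\alpha_2$, but this is a cosmetic difference; your explicit use of invariants to separate the families is in fact more detailed than the paper, which simply asserts mutual non-isomorphism is ``straightforward''.
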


\begin{proof}
Due to the classification of 2-dimensional complex non-abelian Lie
algebras, we assume that $[e_1,e_2]=e_1$. Set $e_{i}\circ
e_{j}=a_{ij}e_{1}+b_{ij}e_{2}$, where $a_{ij},b_{ij}\in\mathbb{C}$
and $i,j\in\{ 1,2\}$. Then we have
$$a_{12}-a_{21}=1, b_{12}=b_{21}.$$
Moreover, in this case, Eq.~(\ref{eq:defi:anti-pre-Lie algebras2}) holds
automatically, and Eq.~(\ref{eq:defi:anti-pre-Lie algebras1})
holds if and only if the following equations hold:
\begin{equation}\label{eq:thm:classification1}
e_{1}\circ (e_{2}\circ e_{1})-e_{2}\circ(e_{1}\circ e_{1})=-e_{1}\circ e_{1},
\end{equation}
\begin{equation}\label{eq:thm:classification2}
e_{1}\circ (e_{2}\circ e_{2})-e_{2}\circ(e_{1}\circ e_{2})=-e_{1}\circ e_{2}.
\end{equation}

\begin{enumerate} \item If $b_{12}=b_{21}\neq 0$, then replacing $e_{1}$ by $\dfrac{1}{b_{12}}e_{1}$, we can assume that $b_{12}=b_{21}=1$. Thus
by Eqs.~(\ref{eq:thm:classification1}) and (\ref{eq:thm:classification2}), we have
\begin{eqnarray*}
&&a_{11}+a_{21}-a_{22}b_{11}+1=0,\;\;
a_{11}-a_{21}b_{11}-b_{11}+b_{11}b_{22}-1=0,\\
&&a_{11}a_{22}-a_{21}^{2}+a_{21}b_{22}-a_{22}+b_{22}+1=0,\;\;
a_{21}-a_{22}b_{11} =0.
\end{eqnarray*}
Therefore we have
$$a_{11}=-1, b_{11}\neq 0, a_{21}=-\dfrac{b_{11}+1}{b_{11}}, a_{22}=-\dfrac{b_{11}+1}{b^{2}_{11}}, b_{22}=\dfrac{1}{b_{11}}.$$
Let $e_{1}\rightarrow\dfrac{1}{\sqrt{-b_{11}}}e_{1}, e_{2}\rightarrow -\dfrac{1}{b_{11}}e_{1}+e_{2}$, we get $(A1)$.\\

\item If $b_{12}=b_{21}=0$, 
then by Eqs.~(\ref{eq:thm:classification1}) and (\ref{eq:thm:classification2}), we have
\begin{eqnarray*}
&&a_{11}-a_{22}b_{11}=0,\;\; a_{21}b_{11}-b_{11}b_{22}+b_{11}=0,\\
&&a_{11}a_{22}+a_{21}b_{22}+b_{22}+(1-a_{21})(a_{21}+1)=0,
a_{22}b_{11}=0.
\end{eqnarray*}
Therefore we have
$$a_{11}=0, a_{22}b_{11}=0, b_{11}(a_{21}-b_{22}+1)=0, (a_{21}+1)(a_{21}-b_{22}-1)=0. $$
\begin{itemize}
\item[(i)] If $b_{11}\neq 0$, then $a_{22}=0, a_{21}=-1, b_{22}=0.$
Let $e_{1}\rightarrow \dfrac{\mathrm{1}}{\sqrt{-b_{11}}}e_{1}, e_{2}\rightarrow -e_{2}$, we still get $(A1)$.

\item[(ii)] If $b_{11}=0$, then the only constraint left is
$$(a_{21}+1)(a_{21}-b_{22}-1)=0.$$
\begin{itemize}
\item[(ii)-(a)] If $a_{21}=-1,$ then $a_{22},b_{22}$ are arbitrary.
\begin{itemize}
\item Let $a_{21}=-1, b_{22}=0.$ Let $e_1\rightarrow e_1, e_{2}\rightarrow a_{22}e_{1}+e_{2}$, we get
$(A2)_0$.

\item  Let $a_{21}=-1, b_{22}\neq 0, -1$. Let
$e_1\rightarrow e_1, e_{2}\rightarrow \dfrac{a_{22}}{b_{22}+1}e_{1}+e_{2}$, we get
$(A2)_\lambda$ with $\lambda\neq 0, -1$ by setting
$\lambda=b_{22}$.

\item Let $a_{21}=-1, b_{22}=-1, a_{22}=0$. Then we get
$(A2)_{-1}$.

\item Let $a_{21}=-1, b_{22}=-1, a_{22}\neq 0$. Let
$e_{1}\rightarrow a_{22}e_{1}, e_{2}\rightarrow e_{2}$, we get
$(A3)$.
\end{itemize}
\item[(ii)-(b)] If $a_{21}\neq -1$, then $a_{22}$ is arbitrary and $b_{22}=a_{21}-1$.
\begin{itemize}

\item Let $a_{21}\neq-1, a_{21}\neq-2$. Let $e_1\rightarrow e_1, e_{2}\rightarrow -\dfrac{a_{22}}{a_{21}+2}e_{1}+e_{2}$, we get
$(A4)_\lambda$ with $\lambda\neq -1, -2$ by setting
$a_{21}=\lambda$.

\item Let $a_{21}=-2, a_{22}=0$. Then we get $(A4)_{-2}$.

\item Let $a_{21}=-2, a_{22}\neq 0$. Let $e_{1}\rightarrow
a_{22}e_{1}, e_{2}\rightarrow e_{2}$, we get
$(A5)$.
\end{itemize}
\end{itemize}
\end{itemize}

\end{enumerate}
It is straightforward to show that they are anti-pre-Lie algebras and moreover,
they are not isomorphic mutually.  Therefore the conclusion follows.
\end{proof}

The following conclusion is obtained directly. Note that an
anti-pre-Lie algebra is called {\bf simple} if there is not an
ideal except for zero and itself.

\begin{cor} With the notations in Proposition~\ref{thm:classification}, among the 2-dimensional complex non-commutative anti-pre-Lie
algebras, we have the following results.
\begin{enumerate}
\item Associative algebras: $(A2)_{-1}$.
 \item Anti-associative algebras: None.  Furthermore, since any 2-dimensional anti-associative algebra is automatically Lie-admissible and thus an anti-pre-Lie algebra, there is not a complex non-commutative anti-associative
algebra in dimension 2. \item Pre-Lie algebras: $(A2)_\lambda$
($\lambda\in \mathbb C$) and $(A3)$.
  \item In Corollary~\ref{cor:class} with $\dim
A=2$, Case (\ref{it:f1}) is isomorphic to $(A2)_{-1}$ and Case
(\ref{it:f2}) is isomorphic to $(A4)_{-2}$. \item There is not a
complex simple anti-pre-Lie algebra in dimension 2.
\end{enumerate}
\end{cor}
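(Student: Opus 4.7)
My approach is to verify each of the five assertions by direct inspection of the five families $(A1)$--$(A5)$ of Proposition~\ref{thm:classification} (and, for item (5), the standard list of 2-dim complex commutative associative algebras). Each condition to be tested (associativity, anti-associativity, the pre-Lie identity, isomorphism to a given presentation, existence of a proper ideal) is a finite computation on the basis $\{e_{1},e_{2}\}$, so no new machinery beyond Propositions~\ref{pro:anti-pre-Lie is Lie-admissible}, \ref{pro:comm} and~\ref{thm:classification} together with Corollary~\ref{cor:class} is required.

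For items (1), (2) and (3), I compute the associator $(x\circ y)\circ z-x\circ(y\circ z)$, the anti-associator $(x\circ y)\circ z+x\circ(y\circ z)$, and the pre-Lie deviation on all triples drawn from $\{e_{1},e_{2}\}$ for each family, and read off which members of the classification are identically annihilated. This singles out $(A2)_{-1}$ for associativity, no member for anti-associativity, and the $(A2)_{\lambda}$-family together with $(A3)$ for the pre-Lie identity. For the ``furthermore'' clause of (2), I invoke the elementary fact that on a $2$-dim space any skew-symmetric bilinear bracket automatically satisfies the Jacobi identity, since three vectors in a $2$-dim space are linearly dependent and the cyclic sum then collapses by antisymmetry; hence every $2$-dim algebra is Lie-admissible. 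Anti-associativity yields the anti-pre-Lie identity directly via $x\circ(y\circ z)-y\circ(x\circ z)=-(x\circ y)\circ z+(y\circ x)\circ z=[y,x]\circ z$, so by Proposition~\ref{pro:anti-pre-Lie is Lie-admissible}(3) any $2$-dim anti-associative algebra is anti-pre-Lie, and the first half of (2) then precludes a $2$-dim complex non-commutative anti-associative algebra.

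Item (4) is settled by exhibiting explicit isomorphisms. With $\{e_{1},e_{2}\}$ the basis of the target algebra in Case (1) (respectively Case (2)) of Corollary~\ref{cor:class} with $n=2$, and $\{f_{1},f_{2}\}$ the standard basis of $(A2)_{-1}$ (respectively $(A4)_{-2}$), the linear map $f_{1}\mapsto e_{2}$, $f_{2}\mapsto -e_{1}$ intertwines the two sets of structure constants in both cases, and the verification reduces to comparing the four basis products.

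For item (5), I exhibit a proper ideal in each representative. In $(A2)_{\lambda},(A3),(A4)_{\lambda}$, and $(A5)$ the subspace $\mathbb{C}\,e_{1}$ is visibly absorbed by both left and right multiplication by elements of the algebra, hence is a proper ideal, a one-line verification per case. The commutative $2$-dim anti-pre-Lie algebras reduce via Proposition~\ref{pro:comm} to commutative associative algebras, every one of which admits a proper ideal by the classical classification. The main obstacle is $(A1)$: here neither $\mathbb{C}\,e_{1}$ nor $\mathbb{C}\,e_{2}$ is preserved by all of $L_{e_{1}},L_{e_{2}}$ and $R_{e_{1}}$, because $L_{e_{2}}$ is diagonal with distinct eigenvalues $-1,0$ while $R_{e_{1}}$ has eigenlines $\mathbb{C}(e_{1}\pm e_{2})$. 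Locating a proper ideal in $(A1)$ (or arguing its non-simplicity by another route) is the non-mechanical part of the proof and is what requires the most care.
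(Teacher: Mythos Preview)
Your treatment of items (1)--(4) is correct and is precisely the kind of direct case-by-case verification the paper has in mind (the paper offers no proof beyond ``obtained directly''); the explicit isomorphism $f_{1}\mapsto e_{2}$, $f_{2}\mapsto -e_{1}$ you give for item (4) checks out in both cases.

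For item (5), however, you have not merely left ``the non-mechanical part'' undone: the gap you flag is real and cannot be closed. The algebra $(A1)$ is \emph{simple}. Indeed, a $1$-dimensional two-sided ideal would have to be invariant under every $L_{x}$ and $R_{x}$. Now $L_{e_{2}}$ acts as $\mathrm{diag}(-1,0)$ in the basis $\{e_{1},e_{2}\}$, so its only $1$-dimensional invariant subspaces are $\mathbb{C}e_{1}$ and $\mathbb{C}e_{2}$; but $R_{e_{1}}$ sends $e_{1}\mapsto -e_{2}$ and $e_{2}\mapsto -e_{1}$, hence preserves neither of these lines. Therefore $(A1)$ has no proper nonzero two-sided ideal. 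Since also $(A1)\circ(A1)$ contains both $e_{2}=-e_{1}\circ e_{1}$ and $e_{1}=-e_{2}\circ e_{1}$, the algebra is genuinely simple in the sense defined just before the corollary.

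So your difficulty with $(A1)$ is not a matter of ``requiring the most care''; it is a counterexample to claim (5) as written. Your proof strategy for (5) is sound for $(A2)_{\lambda}$, $(A3)$, $(A4)_{\lambda}$, $(A5)$ and for the commutative cases via Proposition~\ref{pro:comm}, but the conclusion for $(A1)$ should read the opposite way.
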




\subsection{Anti-pre-Lie algebras and anti-$\mathcal{O}$-operators on Lie algebras}
We introduce the notion of anti-$\mathcal{O}$-operators to
interpret anti-pre-Lie algebras. There is an anti-pre-Lie algebra
structure on the representation space inducing from a strong
anti-$\mathcal O$-operator on a Lie algebra and in particular, the
existence of an invertible anti-$\mathcal O$-operator gives an
equivalent condition for a Lie algebra having a compatible
anti-pre-Lie algebra structure.

Let $(\frak g,[-,-])$ be a Lie algebra. Let ${\rm ad}:\frak
g\rightarrow {\rm End}(\frak g)$ be the linear map defined by ${\rm
ad}(x)(y)=[x,y]$ for all $x,y\in \frak g$. Then $(\mathrm{ad},
\frak g)$ is a representation of $(\frak g,[-,-])$, called the
\textbf{adjoint representation} of $(\frak g,[-,-])$.

\begin{defi}\label{defi:anti O-operators}
Let $(\frak g,[-,-])$ be a Lie algebra and $(\rho,V)$ be a
representation. A linear map $T:V\rightarrow \frak g$ is called an
\textbf{anti-$\mathcal O$-operator} of $(\frak g,[-,-])$
associated to $(\rho,V)$ if $T$ satisfies
\begin{equation}\label{eq:defi:anti O-operators1}
[T(u),T(v)]=T(\rho(T(v))u-\rho(T(u))v), \forall u,v\in V.
\end{equation}
An anti-$\mathcal O$-operator $T$ is called \textbf{strong} if $T$
satisfies
\begin{equation}\label{eq:defi:anti O-operators2}
\rho([T(u),T(v)])w+\rho([T(v),T(w)])u+\rho([T(w),T(u)])v=0,
\forall u,v,w\in V.
\end{equation}
In particular, an anti-$\mathcal O$-operator $R$ of $(\frak
g,[-,-])$ associated to the adjoint representation $({\rm
ad},\frak g)$ is called an {\bf anti-Rota-Baxter operator}, that
is, $R:\frak g\rightarrow \frak g$ is a linear map satisfying
\begin{equation}\label{eq:defi:anti Rota-Baxter operator}
[R(x),R(y)]=R([R(y),x]+[y,R(x)]),\forall x,y\in \frak g.
\end{equation}
An anti-Rota-Baxter operator $R$ is called {\bf strong} if $R$
satisfies
\begin{equation}\label{eq:RB-strong}
 [[R(x),R(y)],z]+[[R(y),R(z)],x]+[[R(z),R(x)],y]=0,\;\;\forall
 x,y,z\in \frak g.
\end{equation}
\end{defi}

\begin{rmk} Let $(\frak g,[-,-])$ be a Lie algebra and $(\rho,V)$ be a
representation. On the one hand, recall that a linear map
$T:V\rightarrow \frak g$ is called an {\bf $\mathcal O$-operator}
of  $(\frak g,[-,-])$ associated to $(\rho, V)$ if $T$ satisfies
\begin{equation}\label{eq:defi: O-operators1}
[T(u),T(v)]=T(\rho(T(u))v-\rho(T(v))u), \forall u,v\in V.
\end{equation}
It was introduced in \cite{Kup} as a natural generalization of the
classical Yang-Baxter equation in a Lie algebra. Hence the notion of anti-$\mathcal O$-operators
is justified. On the other
hand, recall (\cite{Dzh, Fil}) that a linear map $D:\frak
g\rightarrow V$ is called an {\bf anti-derivation} (or an {\bf
anti-1-cocycle}) if $D$ satisfies
\begin{equation}\label{eq:defi:anti-derivation}
D([x,y])=\rho(y) (D(x))-\rho(x)(D(y)),\;\;\forall x,y\in \frak g.
\end{equation}
Obviously, an invertible linear map $T:V\rightarrow \frak g$ is an
anti-$\mathcal O$-operator if and only if $T^{-1}$ is an
anti-derivation.
\end{rmk}


\begin{pro}\label{pro:O-operator}
Let $T:V\rightarrow \frak g$ be an anti-$\mathcal O$-operator of a
Lie algebra $(\frak g,[-,-]_{\frak g})$ associated to a
representation $(\rho,V)$. Define a bilinear operation $\circ: V\otimes V\rightarrow V$ by
\begin{equation}\label{eq:pro:O-operator}
u\circ v=-\rho(T(u))v, \forall u,v\in V.
\end{equation}
Then $(V,\circ)$ satisfies Eq.~(\ref{eq:defi:anti-pre-Lie
algebras1}). Moreover, $(V,\circ)$ is Lie-admissible such that
$(V,\circ)$ is an anti-pre-Lie algebra if and only if $T$ is
strong.
In this case, $T$ is a homomorphism of Lie algebras from the
sub-adjacent Lie algebra $(\frak g(V),[-,-])$ to $(\frak
g,[-,-]_{\frak g})$. Furthermore, there is an induced anti-pre-Lie
algebra structure on $T(V)=\{T(u)|u\in V\}\subset \frak g$ given
by
\begin{equation}\label{eq:pro:O-operator2}
T(u)\circ_{\frak g}T(v)=T(u\circ v), \forall u,v\in V,
\end{equation}
and $T$ is a homomorphism of anti-pre-Lie algebras.
\end{pro}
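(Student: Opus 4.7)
The plan is to unpack the definition $u\circ v=-\rho(T(u))v$ and translate every condition into the Lie algebra $(\frak g,[-,-]_{\frak g})$ using that $\rho$ is a representation and $T$ is an anti-$\mathcal O$-operator. The crucial preliminary identity I would establish first is that
\[
T([u,v]) = [T(u),T(v)]_{\frak g}, \qquad \forall u,v\in V,
\]
where $[u,v]=u\circ v-v\circ u = \rho(T(v))u-\rho(T(u))v$; this is literally Eq.~(\ref{eq:defi:anti O-operators1}). With this identity in hand, the verification of Eq.~(\ref{eq:defi:anti-pre-Lie algebras1}) is a three-line computation: the left side expands to $\rho([T(u),T(v)]_{\frak g})w$ because $\rho$ is a representation, and the right side $[v,u]\circ w=-\rho(T[v,u])w=-\rho([T(v),T(u)]_{\frak g})w$ agrees with it.

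Next I would relate the Lie-admissibility / anti-pre-Lie part to the strong condition. Using $[u,v]\circ w = -\rho(T[u,v])w = -\rho([T(u),T(v)]_{\frak g})w$, the cyclic sum
\[
[u,v]\circ w+[v,w]\circ u+[w,u]\circ v
\]
equals the negative of the left-hand side of Eq.~(\ref{eq:defi:anti O-operators2}). Hence Eq.~(\ref{eq:defi:anti-pre-Lie algebras2}) holds if and only if $T$ is strong. Combining this with the fact that Eq.~(\ref{eq:defi:anti-pre-Lie algebras1}) holds unconditionally, Proposition~\ref{pro:anti-pre-Lie is Lie-admissible} immediately gives that $(V,\circ)$ is Lie-admissible and anti-pre-Lie iff $T$ is strong (the Jacobi identity for $[-,-]$ follows from Eqs.~(\ref{eq:defi:anti-pre-Lie algebras1}) and (\ref{eq:defi:anti-pre-Lie algebras2}) by the computation already done in the proof of Proposition~\ref{pro:anti-pre-Lie is Lie-admissible}).

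Under the strong hypothesis, the identity $T([u,v])=[T(u),T(v)]_{\frak g}$ already shows that $T:(\frak g(V),[-,-])\to (\frak g,[-,-]_{\frak g})$ is a Lie algebra homomorphism. For the induced anti-pre-Lie structure on $T(V)$, the point to check is that $T(u)\circ_{\frak g}T(v):=T(u\circ v)$ is well defined, i.e.\ independent of the chosen preimages. Independence in the first slot is immediate because $(u-u')\circ v = -\rho(T(u-u'))v = 0$ whenever $T(u)=T(u')$. The nontrivial part, which I expect to be the main obstacle, is independence in the second slot. Here I would exploit Eq.~(\ref{eq:defi:anti O-operators1}) in the degenerate case $T(v-v')=0$: the equation then reads $0=[T(u),T(v-v')]_{\frak g}=-T(\rho(T(u))(v-v'))$, so $\rho(T(u))(v-v')\in\ker T$, which gives $T(u\circ(v-v'))=0$. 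This shows $\ker T$ is a two-sided ideal of $(V,\circ)$ and thus $(T(V),\circ_{\frak g})\cong (V/\ker T,\circ)$ inherits the anti-pre-Lie structure. Finally, $T$ being a homomorphism of anti-pre-Lie algebras is immediate from the defining formula $T(u)\circ_{\frak g}T(v)=T(u\circ v)$.
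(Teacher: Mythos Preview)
Your proposal is correct and follows essentially the same route as the paper: both compute $[u,v]\circ w$ via the representation identity $\rho(T(u))\rho(T(v))-\rho(T(v))\rho(T(u))=\rho([T(u),T(v)]_{\frak g})$ together with the anti-$\mathcal O$-operator equation, and both identify the cyclic sum in Eq.~(\ref{eq:defi:anti-pre-Lie algebras2}) with the strong condition~(\ref{eq:defi:anti O-operators2}). The paper simply declares the remaining assertions (the Lie homomorphism and the induced structure on $T(V)$) ``straightforward''; your careful check that $\ker T$ is a two-sided ideal of $(V,\circ)$, using Eq.~(\ref{eq:defi:anti O-operators1}) with $T(v-v')=0$ to get independence in the second variable, is a genuine addition that the paper omits.
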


\begin{proof}
Let $u,v,w\in V$. Then
\begin{eqnarray*}
[u,v]\circ w
&=&(\rho(T(v))u-\rho(T(u))v)\circ w=-\rho(T(\rho(T(v))u))w+\rho(T(\rho(T(u))v))w\\
&=&\rho([T(v),T(u)]_{\frak g})w=\rho(T(v))\rho(T(u))w-\rho(T(u))\rho(T(v))w\\
&=&v\circ(u\circ w)-u\circ(v\circ w).
\end{eqnarray*}
Thus Eq.~(\ref{eq:defi:anti-pre-Lie algebras1}) holds on
$(V,\circ)$. Moreover, $(V,\circ)$ is Lie-admissible if and only
if Eq.~(\ref{eq:defi:anti-pre-Lie algebras2}) holds on
$(V,\circ)$, that is,
\begin{eqnarray*}
&&[u,v]\circ w+[v,w]\circ u+[w,u]\circ v\\
&&=\rho([T(v),T(u)]_{\frak g})w+\rho([T(w),T(v)]_{\frak
g})u+\rho([T(u),T(w)]_{\frak g})v=0.
\end{eqnarray*}
It is straightforward to show  that the other results hold.
\end{proof}

\begin{cor}\label{cor:anti operator}
 Let $(\frak g,[-,-])$ be a Lie algebra and $R:\frak
g\rightarrow \frak g$ be a strong anti-Rota-Baxter operator. Then
\begin{equation}\label{eq:cons}
x\circ y=-[R(x),y],\;\;\forall x,y\in \frak g
\end{equation}
defines an anti-pre-Lie algebra. Conversely, if $R:\frak
g\rightarrow \frak g$ is a linear transformation on a Lie algebra
$(\frak g,[-,-])$ such that Eq.~(\ref{eq:cons}) defines an anti-pre-Lie
algebra, then $R$ satisfies Eq.~(\ref{eq:RB-strong}) and the
following equation:
\begin{equation}\label{eq:cons2}
[[R(x),R(y)]+R([x,R(y)]+[R(x),y]),z]=0,\;\;\forall x,y,z\in \frak g.
\end{equation}
\end{cor}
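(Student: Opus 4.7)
The plan is to obtain both directions as specializations of the more general results on anti-$\mathcal O$-operators. For the forward direction, I would apply Proposition~\ref{pro:O-operator} in the case $V=\frak g$, $\rho=\mathrm{ad}$ and $T=R$. By Definition~\ref{defi:anti O-operators} a strong anti-Rota-Baxter operator is precisely a strong anti-$\mathcal O$-operator associated to the adjoint representation, and Eq.~\eqref{eq:pro:O-operator} specializes to $x\circ y = -\mathrm{ad}(R(x))y = -[R(x),y]$; Proposition~\ref{pro:O-operator} then immediately gives that this bilinear operation defines an anti-pre-Lie algebra on $\frak g$.

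For the converse, I would use the characterization in Proposition~\ref{pro:anti-pre-Lie is Lie-admissible}, under which $(\frak g,\circ)$ is anti-pre-Lie iff both \eqref{eq:defi:anti-pre-Lie algebras1} and \eqref{eq:defi:anti-pre-Lie algebras5} hold, and translate each identity through $x\circ y = -[R(x),y]$, being careful that the bracket appearing in those identities is the commutator of $\circ$, namely $[y,x]=-[R(y),x]+[R(x),y]$ by \eqref{eq:defi:anti-pre-Lie algebras3}. For \eqref{eq:defi:anti-pre-Lie algebras1}, the left-hand side $x\circ(y\circ z) - y\circ(x\circ z)$ expands to $[R(x),[R(y),z]] - [R(y),[R(x),z]]$, which collapses via the Jacobi identity of $(\frak g,[-,-])$ to $[[R(x),R(y)],z]$; the right-hand side $[y,x]\circ z$ expands to $[R([R(y),x])-R([R(x),y]),z]$. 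Equating and using $-[R(y),x]=[x,R(y)]$ yields exactly Eq.~\eqref{eq:cons2}.

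For \eqref{eq:defi:anti-pre-Lie algebras5}, expanding each summand as $x\circ[y,z] = [R(x),[R(y),z]] - [R(x),[R(z),y]]$ and summing cyclically over $(x,y,z)$, I would regroup the six double brackets into three pairs of the form $[R(a),[R(b),c]]-[R(b),[R(a),c]]$ and apply the Jacobi identity of $(\frak g,[-,-])$ to each; the sum collapses to $[[R(x),R(y)],z]+[[R(y),R(z)],x]+[[R(z),R(x)],y]$, whose vanishing is precisely Eq.~\eqref{eq:RB-strong}. There is no significant obstacle; the observation worth highlighting is that the converse does \emph{not} recover the full anti-Rota-Baxter identity \eqref{eq:defi:anti Rota-Baxter operator}, only the weaker Eq.~\eqref{eq:cons2}, which asserts that the anti-Rota-Baxter ``defect'' $[R(x),R(y)]+R([x,R(y)]+[R(x),y])$ lies in the center of $\frak g$.
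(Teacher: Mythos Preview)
Your proof is correct and follows essentially the same approach as the paper: the forward direction is exactly the paper's argument (specialize Proposition~\ref{pro:O-operator} to $\rho=\mathrm{ad}$), and for the converse the paper simply says it ``follows from Definition~\ref{defi:anti-pre-Lie algebras} directly,'' which you have spelled out in detail. Your choice to route through the equivalent characterization \eqref{eq:defi:anti-pre-Lie algebras5} of Proposition~\ref{pro:anti-pre-Lie is Lie-admissible} rather than \eqref{eq:defi:anti-pre-Lie algebras2} is a minor but pleasant simplification, since it yields \eqref{eq:RB-strong} by Jacobi alone without having to feed \eqref{eq:cons2} back in.
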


\begin{proof}
The first half part follows from Proposition~\ref{pro:O-operator}
by letting $\rho={\rm ad}$. The second half part follows from
Definition~\ref{defi:anti-pre-Lie algebras} directly.
\end{proof}


\begin{ex}\label{ex:anti operator}
Let $(\frak g,[-,-])$ be the 2-dimensional complex non-abelian Lie
algebra with a basis $\{e_1,e_2\}$ whose product is given by
$[e_{1}, e_{2}]=e_{1}$. In this case any anti-Rota-Baxter operator
is automatically strong. Let $R:\frak g\rightarrow \frak g$ be a
linear map whose corresponding matrix is given by
$\left(\begin{matrix} r_{11}&r_{12}\cr
r_{21}&r_{21}\cr\end{matrix}\right)$ under the basis
$\{e_1,e_2\}$. Then by Eq.~(\ref{eq:defi:anti Rota-Baxter
operator}), $R$ is an anti-Rota-Baxter operator on $(\frak g,[-,-])$ if
and only if
$$r_{11}(r_{11}+2r_{22})=r_{21}r_{12},\;\;
r_{21}(r_{11}+r_{22})=0.$$ By a case by case analysis, we give all
anti-Rota-Baxter operators as follows.
$$
R_{1}=\left(
\begin{array}{cc}
0 & 0\\
0 & 0\\
\end{array}
\right), R_{2}=\left(
\begin{array}{cc}
0 & a\\
0 & 0\\
\end{array}
\right), R_{3}=\left(
\begin{array}{cc}
0 & 0\\
0 & a\\
\end{array}
\right), R_{4}=\left(
\begin{array}{cc}
0 & b\\
0 & a\\
\end{array}
\right),
$$
$$
R_{5}=\left(
\begin{array}{cc}
a & 0\\
0 & -\dfrac{1}{2}a\\
\end{array}
\right), R_{6}=\left(
\begin{array}{cc}
a & b\\
0 & -\dfrac{1}{2}a\\
\end{array}
\right), R_{7}=\left(
\begin{array}{cc}
0 & 0\\
a & 0\\
\end{array}
\right), R_{8}=\left(
\begin{array}{cc}
a & -\dfrac{a^{2}}{b}\\
b & -a\\
\end{array}
\right),
$$
where the parameters $a,b\ne 0$. Furthermore, the corresponding
anti-pre Lie algebras given by Eq.~(\ref{eq:cons}) in the sense of
isomorphism are listed as follows (we use the notations in
Proposition~\ref{thm:classification} or only give the non-zero
products):
\begin{enumerate}
\item[]($R_1$): the trivial anti-pre-Lie algebra.

\item[]($R_2$): $e_{2}\circ e_{2}=e_{1}$, which is commutative
associative.

\item[]($R_3$)  \& ($R_4$): $(A2)_0$.

\item[] ($R_5$) \& ($R_6$): $(A4)_1$.

\item[] ($R_7$) \& ($R_8$): $e_{1}\circ e_{1}=e_{1}$, which is
commutative associative.

\end{enumerate}
\end{ex}

Next we consider the invertible anti-$\mathcal O$-operators.

\begin{pro}\label{thm:invertible}
An invertible anti-$\mathcal{O}$-operator of a Lie algebra is
automatically strong.
\end{pro}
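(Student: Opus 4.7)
The plan is to reduce the strong condition on $T$ to Lie-admissibility of the operation on $V$ induced by $T$, and then to obtain that Lie-admissibility for free from the invertibility of $T$ together with the Jacobi identity on $\mathfrak g$.

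First, I would carry over the construction from Proposition~\mref{pro:O-operator} and set $u\circ v:=-\rho(T(u))v$ with $[u,v]:=u\circ v-v\circ u$ on $V$. By that proposition, $(V,\circ)$ already satisfies \meqref{eq:defi:anti-pre-Lie algebras1}; moreover, a direct unfolding of the anti-$\mathcal O$-operator identity \meqref{eq:defi:anti O-operators1} gives
$$T([u,v])=T\bigl(\rho(T(v))u-\rho(T(u))v\bigr)=[T(u),T(v)],\quad \forall\, u,v\in V,$$
so $T$ intertwines the candidate bracket on $V$ with the Lie bracket on $\mathfrak g$.

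Next, because $T$ is invertible and hence injective, I would pull back the Jacobi identity from $\mathfrak g$: applying $T^{-1}$ to $[[T(u),T(v)],T(w)]+\text{cyc.}=0$ and iterating $T([u,v])=[T(u),T(v)]$ yields $[[u,v],w]+[[v,w],u]+[[w,u],v]=0$ on $V$. Hence $(V,\circ)$ is Lie-admissible.

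Finally, the identity displayed in the proof of Proposition~\mref{pro:O-operator} reads
$$[u,v]\circ w+[v,w]\circ u+[w,u]\circ v=\rho([T(v),T(u)])w+\rho([T(w),T(v)])u+\rho([T(u),T(w)])v,$$
so Lie-admissibility of $(V,\circ)$ is, after a cyclic relabelling, exactly the strong condition \meqref{eq:defi:anti O-operators2}. Therefore $T$ is strong. The only real subtlety is sign and labelling bookkeeping in the transport-of-structure step; no substantial new calculation is required beyond what already appears in Proposition~\mref{pro:O-operator}.
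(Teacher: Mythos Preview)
Your proposal is correct and follows essentially the same route as the paper: define $\circ$ on $V$, use the anti-$\mathcal{O}$-operator identity to see that $T$ intertwines the induced bracket on $V$ with the Lie bracket on $\mathfrak g$ (the paper writes this as $[u,v]=T^{-1}([T(u),T(v)]_{\mathfrak g})$, you as $T([u,v])=[T(u),T(v)]$), pull back the Jacobi identity via invertibility of $T$, and then invoke Proposition~\ref{pro:O-operator} to conclude that Lie-admissibility of $(V,\circ)$ is equivalent to $T$ being strong. The only difference is cosmetic: the paper appeals directly to the statement of Proposition~\ref{pro:O-operator} for the final step, whereas you cite the displayed identity from its proof.
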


\begin{proof}
Let $T:V\rightarrow \frak g$ be an anti-$\mathcal{O}$-operator of
a Lie algebra $(\frak g,[-,-]_{\frak g})$ associated to a
representation $(\rho,V)$. Define a bilinear operation $\circ: V\otimes V\rightarrow V$ by Eq.~(\ref{eq:pro:O-operator}). Then we have
$$[u,v]=u\circ v-v\circ u={\rho(T(v))u-\rho(T(u))v}=T^{-1}([T(u),T(v)]_{\frak g}),\;\;\forall u,v\in V.$$
Therefore for all $u,v,w\in V$, we have
$$[[u,v],w]=[T^{-1}([T(u),T(v)]_{\frak g}),w]=T^{-1}([[T(u),T(v)]_{\frak g},T(w)]_{\frak g}). $$
Thus $(V,[-,-])$ is a Lie algebra, and hence $(V,\circ)$ is
Lie-admissible. Then by Proposition \ref{pro:O-operator}, $T$ is
strong.
\end{proof}

\begin{cor}\label{cor:anti O-operators and anti-pre-Lie algebras}
Let $(\frak g,[-,-])$ be a Lie algebra. Then there is a compatible
anti-pre-Lie algebra structure on $\frak g$ if and only if there
exists an invertible anti-$\mathcal{O}$-operator of $(\frak
g,[-,-])$.
\end{cor}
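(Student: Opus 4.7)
The plan is to prove both directions by leveraging Propositions~\ref{pro:O-operator} and~\ref{thm:invertible}, which together already do almost all the work.

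For the $(\Longleftarrow)$ direction, suppose $T:V\rightarrow\frak g$ is an invertible anti-$\mathcal O$-operator of $(\frak g,[-,-])$ associated to some representation $(\rho,V)$. By Proposition~\ref{thm:invertible}, $T$ is automatically strong, so Proposition~\ref{pro:O-operator} provides an anti-pre-Lie algebra structure $\circ$ on $V$ by $u\circ v=-\rho(T(u))v$, together with an induced anti-pre-Lie algebra structure $\circ_{\frak g}$ on $T(V)$ defined by $T(u)\circ_{\frak g}T(v)=T(u\circ v)$. Since $T$ is surjective, $T(V)=\frak g$, so $\circ_{\frak g}$ is defined on all of $\frak g$. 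Moreover, the same proposition asserts that $T$ is a Lie algebra homomorphism from the sub-adjacent Lie algebra $(\frak g(V),[-,-])$ to $(\frak g,[-,-])$; since $T$ is bijective, it is a Lie algebra isomorphism, which shows that the sub-adjacent bracket of $(\frak g,\circ_{\frak g})$ agrees with the original $[-,-]$. Hence $\circ_{\frak g}$ is a compatible anti-pre-Lie algebra structure on $(\frak g,[-,-])$.

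For the $(\Longrightarrow)$ direction, suppose $(\frak g,\circ)$ is a compatible anti-pre-Lie algebra structure on $(\frak g,[-,-])$. By Proposition~\ref{pro:anti-pre-Lie is Lie-admissible}, $(-\mathcal L_{\circ},\frak g)$ is a representation of $(\frak g,[-,-])$. I would take $T=\mathrm{id}_{\frak g}:\frak g\to\frak g$ and verify that it is an anti-$\mathcal O$-operator associated to the representation $(-\mathcal L_{\circ},\frak g)$. With $\rho=-\mathcal L_{\circ}$ and $T=\mathrm{id}_{\frak g}$, Eq.~(\ref{eq:defi:anti O-operators1}) reduces to
$$[u,v]=\rho(v)u-\rho(u)v=-v\circ u+u\circ v,$$
which is precisely Eq.~(\ref{eq:defi:anti-pre-Lie algebras3}). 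Since $\mathrm{id}_{\frak g}$ is obviously invertible, this yields the desired invertible anti-$\mathcal O$-operator.

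There is no real obstacle here; the conclusion is essentially packaged into the two preceding results, and the only new content is the observation that when the representation space coincides with the Lie algebra itself, the identity map is the canonical invertible anti-$\mathcal O$-operator. If anything, the mildly subtle point is noticing that invertibility is used twice in the $(\Longleftarrow)$ direction --- once to conclude $T(V)=\frak g$ (so $\circ_{\frak g}$ lives on all of $\frak g$), and once to upgrade the Lie algebra homomorphism property of $T$ into a genuine identification of brackets.
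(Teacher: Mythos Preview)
Your proof is correct and follows essentially the same approach as the paper: both use $T=\mathrm{id}$ with the representation $(-\mathcal L_{\circ},\frak g)$ for the forward direction, and both invoke Propositions~\ref{pro:O-operator} and~\ref{thm:invertible} for the converse. The only cosmetic difference is that the paper verifies $[x,y]=x\circ_{\frak g}y-y\circ_{\frak g}x$ by an explicit computation, whereas you deduce it from the fact that $T$ is simultaneously an anti-pre-Lie algebra isomorphism and a Lie algebra isomorphism; both arguments are equally valid.
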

\begin{proof}
Suppose that $(\frak g,\circ)$ is a compatible anti-pre-Lie
algebra structure on $(\frak g,[-,-])$. Then
$$[x,y]=x\circ y-y\circ x=-\mathcal{L}_{\circ}(y)x-(-\mathcal{L}_{\circ}(x))y, \forall x,y\in \frak g. $$
Thus $T=\mathrm{id}$ is an invertible anti-$\mathcal O$-operator
of $(\frak g,[-,-])$ associated to $(-\mathcal{L}_{\circ},\frak
g)$.

Conversely, suppose that $T:V\rightarrow \frak g$ is an invertible
anti-$\mathcal O$-operator of $(\frak g,[-,-])$ associated to
$(\rho, V)$. Then by Proposition~\ref{thm:invertible}, $T$ is
strong. And by Proposition~\ref{pro:O-operator}, there is an
anti-pre-Lie algebra structure on $V$ given by
Eq.~(\ref{eq:pro:O-operator}). Moreover, since $T$ is invertible,
there is an anti-pre-Lie algebra structure on the underlying
vector space of $\frak g$ given by Eq.~(\ref{eq:pro:O-operator2}).
Explicitly, for any $x,y\in \frak g$, there exist $u,v\in V$ such
that $x=T(u), y=T(v)$. Hence
\begin{equation}
x\circ_{\frak g} y=T(u)\circ_{\frak g} T(v)=T(u\circ
v)=-T(\rho(x)T^{-1}(y)), \forall x,y\in \frak g.
\end{equation}
Furthermore, by Eq.~(\ref{eq:defi:anti O-operators1}), we have
\begin{eqnarray*}
[x,y]=[T(u),T(v)]=T(\rho(T(v))u-\rho(T(u))v)=T(\rho(y)T^{-1}(x)-\rho(x)T^{-1}(y))
=x\circ_{\frak g} y-y\circ_{\frak g} x.
\end{eqnarray*}
Thus $(\frak g,\circ_{\frak g})$ is an anti-pre-Lie algebra whose
sub-adjacent Lie algebra is $(\frak g,[-,-])$.
\end{proof}

\subsection{Anti-pre-Lie algebras and commutative 2-cocycles on Lie algebras}
As a direct consequence of the study 
in the previous subsection, we illustrate that there are
anti-pre-Lie algebras obtained from nondegenerate commutative
2-cocycles on Lie algebras. Conversely, there is a natural
construction of nondegenerate commutative 2-cocycles on
semi-direct product Lie algebras induced from anti-pre-Lie
algebras. There is also a construction as well as the
classification of anti-pre-Lie algebras from symmetric bilinear
forms in which the bilinear forms are commutative 2-cocycles on
the sub-adjacent Lie algebras.

Let $(\rho,V)$ be a representation of a Lie algebra $(\frak
g,[-,-])$.  Then $(\rho^{*},V^{*})$  is also a representation of
$(\frak g,[-,-])$, where the linear map $\rho^{*}:\frak
g\rightarrow\mathrm{End}(V^{*})$ is defined as
$$
\langle\rho^{*}(x)u^{*},v\rangle=-\langle
u^{*},\rho(x)v\rangle,\forall x\in \frak g, u^{*}\in V^{*},v\in V.
$$
In particular,
  $(\mathrm{ad}^{*},\frak g^{*})$ is a representation of $(\frak g,[-,-])$.

\begin{thm}\label{thm:commutative 2-cocycles and anti-pre-Lie algebras}
Let $\mathcal{B}$ be a nondegenerate commutative 2-cocycle on a
Lie algebra $(\frak g,[-,-])$. Then there exists a compatible
anti-pre-Lie algebra structure $\circ$ on $(\frak g,[-,-])$ given
by
\begin{equation}\label{eq:thm:commutative 2-cocycles and anti-pre-Lie algebras}
\mathcal{B}(x\circ y,z)=\mathcal{B}(y,[x,z]), \;\;\forall x,y,z\in
\frak g.
\end{equation}
\end{thm}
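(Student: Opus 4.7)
The plan is to reduce the statement to the anti-$\mathcal O$-operator machinery of the preceding subsection, using $\mathcal{B}$ to identify $\frak g^*$ with $\frak g$ and turning the commutative 2-cocycle identity into the anti-$\mathcal O$-operator axiom.

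First, since $\mathcal{B}$ is nondegenerate, I define a linear isomorphism $T:\frak g^*\to\frak g$ by
$$\mathcal{B}(T(\xi),x)=\langle \xi,x\rangle,\quad\forall\,\xi\in\frak g^*,\ x\in\frak g.$$
The main claim is that $T$ is an anti-$\mathcal O$-operator of $(\frak g,[-,-])$ associated to the coadjoint representation $(\mathrm{ad}^*,\frak g^*)$. Writing $x=T(\xi),\ y=T(\eta)$, the defining equation \eqref{eq:defi:anti O-operators1} is equivalent, after pairing with an arbitrary $z\in\frak g$ via $\mathcal{B}$, to the identity
$$\mathcal{B}([x,y],z)=\mathcal{B}(y,[x,z])-\mathcal{B}(x,[y,z]).$$
This is exactly what the commutative 2-cocycle identity \eqref{eq:2-cocycle} gives: rearranging it and using symmetry of $\mathcal{B}$ together with skew-symmetry of the bracket,
$$\mathcal{B}([x,y],z)=-\mathcal{B}([y,z],x)-\mathcal{B}([z,x],y)=-\mathcal{B}(x,[y,z])+\mathcal{B}(y,[x,z]).$$

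Next, since $T$ is invertible, Proposition~\ref{thm:invertible} says $T$ is automatically strong, and hence Corollary~\ref{cor:anti O-operators and anti-pre-Lie algebras} produces a compatible anti-pre-Lie algebra structure $\circ_{\frak g}$ on $(\frak g,[-,-])$ given by $x\circ_{\frak g} y=-T(\mathrm{ad}^*(x)T^{-1}(y))$. It remains to check that this operation coincides with the one defined implicitly by \eqref{eq:thm:commutative 2-cocycles and anti-pre-Lie algebras}: for any $z\in\frak g$,
$$\mathcal{B}(x\circ_{\frak g} y,z)=-\langle \mathrm{ad}^*(x)T^{-1}(y),z\rangle=\langle T^{-1}(y),[x,z]\rangle=\mathcal{B}(y,[x,z]),$$
and nondegeneracy of $\mathcal{B}$ then pins down $x\circ_{\frak g} y$ uniquely.

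The only real computation is the verification that the commutative 2-cocycle identity is precisely the anti-$\mathcal O$-operator axiom under $T$; this is the step I expect to be the main (but in fact short) obstacle, as it requires carefully combining the symmetry of $\mathcal{B}$ with skew-symmetry of $[-,-]$ to match signs. Everything else is a direct appeal to results already established: Proposition~\ref{thm:invertible} gives strongness for free, and Corollary~\ref{cor:anti O-operators and anti-pre-Lie algebras} supplies the anti-pre-Lie structure and the compatibility with $[-,-]$, so no independent check of Eqs.~\eqref{eq:defi:anti-pre-Lie algebras1} and \eqref{eq:defi:anti-pre-Lie algebras2} is required.
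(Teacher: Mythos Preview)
Your proof is correct and follows essentially the same route as the paper: both identify $\frak g$ with $\frak g^*$ via $\mathcal{B}$, translate the commutative 2-cocycle identity into the anti-$\mathcal{O}$-operator axiom for the coadjoint representation, and then invoke Corollary~\ref{cor:anti O-operators and anti-pre-Lie algebras} to produce the anti-pre-Lie structure satisfying \eqref{eq:thm:commutative 2-cocycles and anti-pre-Lie algebras}. The only cosmetic difference is that the paper defines $T:\frak g\to\frak g^*$ and shows $T^{-1}$ is the anti-$\mathcal{O}$-operator, whereas you take $T:\frak g^*\to\frak g$ directly; the computations are otherwise identical.
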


\begin{proof}
Define a linear map $T:\frak g\rightarrow \frak g^{*}$ by
\begin{equation}\label{eq:thm:commutative 2-cocycles and anti-pre-Lie algebras2}
\langle T(x),y\rangle=\mathcal{B}(x,y),\;\; \forall x,y\in \frak
g.
\end{equation}
Then $T$ is invertible by the nondegeneracy of $\mathcal{B}$.
Moreover, for any $a^*,b^*\in \frak g^*$, there exist $x,y\in
\frak g$ such that $a^*=T(x), b^*=T(y)$. Then for any $z\in \frak
g$, we have
\begin{eqnarray*}
&&\mathcal{B}([x,y],z)=\langle T([x,y]),z\rangle=\langle
T([T^{-1}(a^{*}),T^{-1}(b^{*})]), z\rangle,\\
&&\mathcal{B}([y,z],x)=\langle
T(x),[y,z]\rangle=-\langle\mathrm{ad}^{*}(y)T(x),z\rangle=-\langle\mathrm{ad}^{*}(T^{-1}(b^{*}))a^{*},z\rangle,\\
&&\mathcal{B}([z,x],y)=\langle
T(y),[z,x]\rangle=\langle\mathrm{ad}^{*}(x)T(y),z\rangle=\langle\mathrm{ad}^{*}(T^{-1}(a^{*}))b^{*},z\rangle.
\end{eqnarray*}
Since $\mathcal{B}$ is a commutative 2-cocycle, we have
$$[T^{-1}(a^{*}),T^{-1}(b^{*})]=T^{-1}(\mathrm{ad}^{*}(T^{-1}(b^{*}))a^{*}-\mathrm{ad}^{*}(T^{-1}(a^{*}))b^{*}).$$
Thus $T^{-1}$ is an anti-$\mathcal{O}$-operator of $(\frak
g,[-,-])$ associated to $(\mathrm{ad}^{*},\frak g^{*})$. By
Corollary \ref{cor:anti O-operators and anti-pre-Lie algebras},
there is a compatible anti-pre-Lie algebra structure $\circ$ on
$(\frak g,[-,-])$ given by
\begin{equation*}\label{eq:compatible anti-pre-Lie algebra structure}
x\circ y=-T^{-1}(\mathrm{ad}^{*}(x)T(y)),\;\; \forall x,y\in \frak
g,
\end{equation*}
which gives
$$\mathcal{B}(x\circ y,z)=\langle T(x\circ y),z\rangle=-\langle\mathrm{ad}^{*}(x)T(y),z\rangle=\langle T(y), [x,z]\rangle=\mathcal{B}(y,[x,z]),\;\;\forall x,y,z\in \frak g.$$
Hence the conclusion holds.
\end{proof}



\begin{ex} There are two approaches by applying Theorem \ref{thm:commutative 2-cocycles and anti-pre-Lie
algebras}:
one is to consider all nondegenerate commutative 2-cocycles on a
fixed Lie algebra and another is to consider all Lie algebras
admitting a fixed nondegenerate symmetric bilinear form such that it is a commutative 2-cocycle. As an
example, we apply and illustrate the second approach in dimension
2. Let $\mathcal{B}$ be a nondegenerate symmetric bilinear form on
the complex 2-dimensional non-abelian Lie algebra
$(\mathfrak{g},[-,-])$. Note that in this case any symmetric
bilinear form is a commutative 2-cocycle. Hence there exists a
basis $\{e_{1}, e_{2}\}$ of $\frak g$ such that the non-zero values of
$\mathcal{B}$ are given by $\mathcal{B}(e_{1},
e_{1})=\mathcal{B}(e_{2}, e_{2})=1$. Assume that $[e_{1},
e_{2}]=ae_{1}+be_{2}$, where $a\neq 0$ or $b\neq 0$. Then by
Theorem \ref{thm:commutative 2-cocycles and anti-pre-Lie
algebras}, we have the following compatible anti-pre Lie algebras:
$$e_{1}\circ e_{1}=ae_{2}, e_{1}\circ e_{2}=be_{2}, e_{2}\circ e_{1}=-ae_{1}, e_{2}\circ e_{2}=-be_{1}.$$
By a similar proof as the one for
Proposition~\ref{thm:classification}, they are isomorphic to
$(A1)$ or $(A4)_0$ in Proposition~\ref{thm:classification}.
\end{ex}

\begin{ex}\label{ex:anti-pre-Lie algebra from sl(2)}
Let $(\frak g,[-,-])$ be the 3-dimensional simple Lie algebra
$\frak s\frak l(2,\mathbb{F})$ with a basis $\{
x=\left(\begin{matrix} 0&1\cr
0&0\cr\end{matrix}\right),h=\left(\begin{matrix} 1&0\cr
0&-1\cr\end{matrix}\right),y=\left(\begin{matrix} 0&0\cr
1&0\cr\end{matrix}\right)\}$ whose products are given by
$$[h,x]=2x,[h,y]=-2y,[x,y]=h.$$
It is straightforward to check that the bilinear form $\mathcal B$
whose non-zero values are given by
$$\mathcal{B}(x,y)=\mathcal{B}(y,x)=-1,\mathcal{B}(h,h)=4$$
is a nondegenerate commutative 2-cocycle on $(\frak g,[-,-])$.
Thus by Theorem \ref{thm:commutative 2-cocycles and anti-pre-Lie
algebras}, there is a compatible anti-pre-Lie algebra structure on
$(\frak g,[-,-])$ given by the following non-zero products:
$$x\circ h=-4x, x\circ y=\dfrac{1}{2}h, h\circ x=-2x, h\circ y=2y, y\circ x=-\dfrac{1}{2}h, y\circ h=4y.$$
Since any ideal of an anti-pre-Lie algebra is still an ideal of
the sub-adjacent Lie algebra, any compatible anti-pre-Lie algebra
structure on a simple Lie algebra is simple. Hence the above
anti-pre-Lie algebra is simple.
\end{ex}

\begin{defi}\label{defi:quadratic}
A bilinear form $\mathcal B$ on an anti-pre-Lie algebra
$(A,\circ)$ is called \textbf{invariant}  if
Eq.~(\ref{eq:thm:commutative
2-cocycles and anti-pre-Lie algebras}) holds. %

\end{defi}

\begin{cor}\label{cor:corres}
Any symmetric invariant bilinear form on an anti-pre-Lie algebra
$(A,\circ)$ is a commutative 2-cocycle on the sub-adjacent Lie
algebra $(\frak g(A),[-,-])$. Conversely, a nondegenerate
commutative 2-cocycle on a Lie algebra $(\frak g,[-,-])$ is
invariant on the compatible anti-pre-Lie algebra given by
Eq.~(\ref{eq:thm:commutative 2-cocycles and anti-pre-Lie
algebras}).
\end{cor}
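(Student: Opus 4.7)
The plan is to handle the two implications separately; both reduce to manipulations of the single invariance identity $\mathcal{B}(x\circ y,z)=\mathcal{B}(y,[x,z])$, together with symmetry of $\mathcal{B}$ and the definition $[x,y]=x\circ y-y\circ x$ of the sub-adjacent bracket, so no deep ingredient is required.

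For the forward direction I would take a symmetric invariant bilinear form $\mathcal{B}$ on an anti-pre-Lie algebra $(A,\circ)$ and directly expand the cyclic sum. Writing $[x,y]=x\circ y-y\circ x$ and applying invariance twice gives
\begin{equation*}
\mathcal{B}([x,y],z)=\mathcal{B}(x\circ y,z)-\mathcal{B}(y\circ x,z)=\mathcal{B}(y,[x,z])-\mathcal{B}(x,[y,z]).
\end{equation*}
Using the symmetry of $\mathcal{B}$ to rewrite the right-hand side as $\mathcal{B}([x,z],y)-\mathcal{B}([y,z],x)=-\mathcal{B}([z,x],y)-\mathcal{B}([y,z],x)$ yields exactly the commutative 2-cocycle identity (\ref{eq:2-cocycle}) on $(\frak g(A),[-,-])$.

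For the converse direction, the work is essentially already done inside the proof of Theorem~\ref{thm:commutative 2-cocycles and anti-pre-Lie algebras}. Given a nondegenerate commutative 2-cocycle $\mathcal{B}$ on $(\frak g,[-,-])$, the compatible anti-pre-Lie algebra structure produced there was defined precisely by the relation $\mathcal{B}(x\circ y,z)=\mathcal{B}(y,[x,z])$ (the last displayed equation in that proof). Since this is Eq.~(\ref{eq:thm:commutative 2-cocycles and anti-pre-Lie algebras}), Definition~\ref{defi:quadratic} immediately identifies $\mathcal{B}$ as an invariant bilinear form on $(\frak g,\circ)$.

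There is no real obstacle; the only point requiring a moment of care is to make sure the symmetry of $\mathcal{B}$ is used correctly when converting $\mathcal{B}(y,[x,z])-\mathcal{B}(x,[y,z])$ into a cyclic sum in the first slot, since otherwise one obtains an identity involving $\mathcal{B}([x,z],y)$ etc.\ with the wrong sign pattern. Once the symmetry step is made explicit, the proof fits in a few lines and doubles as a conceptual companion to Theorem~\ref{thm:commutative 2-cocycles and anti-pre-Lie algebras}: nondegeneracy is unnecessary for the forward direction, and is used in the converse only to produce the anti-pre-Lie structure in the first place.
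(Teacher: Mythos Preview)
Your proposal is correct and follows essentially the same route as the paper: the paper expands $\mathcal{B}([x,y],z)$ via invariance to obtain $\mathcal{B}(y,[x,z])-\mathcal{B}(x,[y,z])$ and declares the 2-cocycle identity, then cites Theorem~\ref{thm:commutative 2-cocycles and anti-pre-Lie algebras} for the converse. Your version simply makes the symmetry step explicit where the paper leaves it implicit.
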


\begin{proof}
For the first half part, by Eq.~(\ref{eq:thm:commutative
2-cocycles and anti-pre-Lie algebras}), we have
$$\mathcal{B}([x,y],z)=\mathcal{B}(x\circ y,z)-\mathcal{B}(y\circ x,z)=\mathcal{B}(y,[x,z])-\mathcal{B}(x,[y,z]),\forall x,y,z\in A.$$
Thus $\mathcal{B}$ is a commutative 2-cocycle on the sub-adjacent
Lie algebra $(\frak g(A),[-,-])$. The second half part follows
from Theorem~\ref{thm:commutative 2-cocycles and anti-pre-Lie
algebras}.
\end{proof}

Recall that two representations $(\rho_{1},V_{1})$ and
$(\rho_{2},V_{2})$ of a Lie algebra $(\frak g,[-,-])$ are called
\textbf{equivalent} if there is a linear isomorphism
$\varphi:V_{1}\rightarrow V_{2}$ such that
$\varphi(\rho_{1}(x)v)=\rho_{2}(x)\varphi(v), \forall x\in \frak
g, v\in V_{1}$. 


\begin{pro}\label{pro:equivalence and invariance}
Let $(A,\circ)$ be an anti-pre-Lie algebra. Then there is a
nondegenerate invariant bilinear form on $(A,\circ)$ if and only
if $(-\mathcal{L}_{\circ}, A)$ and $(\mathrm{ad}^{*},A^{*})$ are
equivalent as representations of the sub-adjacent Lie algebra
$(\frak g(A),[-,-])$.
\end{pro}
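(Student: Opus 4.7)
The plan is to convert the statement about nondegenerate invariant bilinear forms into a statement about linear isomorphisms between representation spaces, via the standard identification of bilinear forms on $A$ with linear maps $A\to A^*$.

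First, given a nondegenerate bilinear form $\mathcal{B}$ on $A$, I would define $\varphi:A\to A^*$ by $\langle \varphi(x),y\rangle=\mathcal{B}(x,y)$ for all $x,y\in A$. Nondegeneracy of $\mathcal{B}$ is then equivalent to $\varphi$ being a linear isomorphism. The heart of the argument is to show that the invariance condition~(\ref{eq:thm:commutative 2-cocycles and anti-pre-Lie algebras}) is the same as the intertwining identity $\varphi\circ(-\mathcal{L}_{\circ}(x))=\mathrm{ad}^{*}(x)\circ\varphi$ for every $x\in A$. Unwinding definitions, for all $x,y,z\in A$,
\begin{equation*}
\langle \varphi(-\mathcal{L}_{\circ}(x)y),z\rangle=-\mathcal{B}(x\circ y,z),\qquad
\langle \mathrm{ad}^{*}(x)\varphi(y),z\rangle=-\langle\varphi(y),[x,z]\rangle=-\mathcal{B}(y,[x,z]),
\end{equation*}
so the two sides agree on all $z$ exactly when $\mathcal{B}(x\circ y,z)=\mathcal{B}(y,[x,z])$. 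Since $z$ is arbitrary, invariance of $\mathcal{B}$ is equivalent to $\varphi$ intertwining $-\mathcal{L}_{\circ}$ with $\mathrm{ad}^{*}$.

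For the converse direction, given a linear isomorphism $\varphi:A\to A^{*}$ that equivalences the representations, I would define $\mathcal{B}(x,y):=\langle\varphi(x),y\rangle$. Nondegeneracy follows from $\varphi$ being an isomorphism, and the computation above run backwards gives the invariance of $\mathcal{B}$. Thus the two directions of the equivalence reduce to a single identity read in opposite directions.

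I do not expect any serious obstacle here: the only subtlety is bookkeeping the minus sign coming from the definition of the dual representation $\langle\mathrm{ad}^{*}(x)u^{*},v\rangle=-\langle u^{*},[x,v]\rangle$, which combines with the minus sign in $-\mathcal{L}_{\circ}$ so that the invariance equation~(\ref{eq:thm:commutative 2-cocycles and anti-pre-Lie algebras}) matches $\varphi(-\mathcal{L}_{\circ}(x)y)=\mathrm{ad}^{*}(x)\varphi(y)$ on the nose. Note that no symmetry of $\mathcal{B}$ is assumed or needed, consistent with Definition~\ref{defi:quadratic}. The fact that both sides of the intertwining relation automatically respect the Lie bracket of $(\frak g(A),[-,-])$ is guaranteed by Proposition~\ref{pro:anti-pre-Lie is Lie-admissible}(3), so once the intertwining identity holds on generators it is consistent with the Lie algebra action and no further verification is required.
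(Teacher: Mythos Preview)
Your proof is correct and follows essentially the same approach as the paper: both convert between $\mathcal{B}$ and a linear isomorphism $\varphi:A\to A^*$ via $\mathcal{B}(x,y)=\langle\varphi(x),y\rangle$ and verify that invariance is equivalent to the intertwining relation by the same one-line computation. The paper organizes the two implications separately while you package them as a single equivalence, but the content is identical.
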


\begin{proof}
Suppose  $\varphi:A\rightarrow A^{*}$ is the linear isomorphism
satisfying
$$\varphi (-\mathcal L_\circ(x)y)={\rm
ad}^*(x)\varphi(y),\;\;\forall x,y\in A.$$ Define a nondegenerate
bilinear form $\mathcal{B}$ on $A$ by
\begin{equation}\label{eq:pro:equivalence and invariance}
\mathcal{B}(x,y)=\langle \varphi(x),y\rangle,\;\; \forall x,y\in
A.
\end{equation}
Then we have
$$\mathcal{B}(x\circ y, z)=\langle\varphi(\mathcal{L}_{\circ}(x)y), z\rangle=-\langle\mathrm{ad}^{*}(x)\varphi(y), z\rangle=\langle\varphi(y),[x,z]\rangle=\mathcal{B}(y,[x,z]),\;\;\forall x,y,z\in A.$$
Thus $\mathcal{B}$ is invariant on $(A,\circ)$.

Conversely, suppose that $\mathcal{B}$ is a nondegenerate
invariant bilinear form on $(A,\circ)$. Define a linear map
$\varphi:A\rightarrow A^{*}$ by Eq.~(\ref{eq:pro:equivalence and
invariance}). Then by a similar proof as above, we  show that
$\varphi$ gives the equivalence between $(-\mathcal{L}_{\circ},A)$
and $(\mathrm{ad}^{*},A^{*})$ as representations of $(\frak
g(A),[-,-])$.
\end{proof}

\begin{cor}
Let $(\frak g,[-,-])$ be a Lie algebra. If there is a nondegenerate commutative 2-cocycle
on $(\frak g,[-,-])$, then there is a compatible anti-pre-Lie algebra $(\frak g,\circ)$ given by
Eq.~(\ref{eq:thm:commutative 2-cocycles and anti-pre-Lie
algebras}) and moreover, $(-\mathcal{L}_{\circ}, \frak g)$ and $(\mathrm{ad}^{*},\mathfrak{g}^{*})$ are
equivalent as representations of $(\frak g,[-,-])$. Conversely, if there is a compatible anti-pre-Lie algebra $(\frak g,\circ)$ such that
$(-\mathcal{L}_{\circ}, \mathfrak{g})$ and $(\mathrm{ad}^{*},\mathfrak{g}^{*})$ are
equivalent as representations of $(\frak g,[-,-])$, then there is a nondegenerate bilinear form $\mathcal B$ satisfying
\begin{equation}
\mathcal B([x,y],z)+\mathcal B(y,[z,x])+\mathcal B(x,[y,z])=0,\;\;\forall x,y,z\in \frak g.
\end{equation}
\end{cor}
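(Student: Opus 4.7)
The plan is to obtain both directions as essentially immediate consequences of Theorem~\ref{thm:commutative 2-cocycles and anti-pre-Lie algebras}, Corollary~\ref{cor:corres} and Proposition~\ref{pro:equivalence and invariance}, with one small subtlety in the converse direction that I will flag below.

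For the forward direction, I would start from a nondegenerate commutative 2-cocycle $\mathcal{B}$ on $(\mathfrak{g},[-,-])$. Theorem~\ref{thm:commutative 2-cocycles and anti-pre-Lie algebras} produces a compatible anti-pre-Lie algebra $(\mathfrak{g},\circ)$ by $\mathcal{B}(x\circ y,z)=\mathcal{B}(y,[x,z])$, and in particular $\mathcal{B}$ itself is a nondegenerate invariant (and symmetric) bilinear form on $(\mathfrak{g},\circ)$ in the sense of Definition~\ref{defi:quadratic}. Proposition~\ref{pro:equivalence and invariance} then immediately delivers the claimed equivalence between $(-\mathcal{L}_{\circ},\mathfrak{g})$ and $(\mathrm{ad}^*,\mathfrak{g}^*)$ as representations of $(\mathfrak{g},[-,-])$.

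For the converse, suppose $(\mathfrak{g},\circ)$ is a compatible anti-pre-Lie algebra such that $\varphi:\mathfrak{g}\to\mathfrak{g}^*$ realizes the equivalence of $(-\mathcal{L}_{\circ},\mathfrak{g})$ with $(\mathrm{ad}^*,\mathfrak{g}^*)$. By Proposition~\ref{pro:equivalence and invariance}, the formula $\mathcal{B}(x,y)=\langle\varphi(x),y\rangle$ defines a nondegenerate bilinear form on $\mathfrak{g}$ which is invariant on $(\mathfrak{g},\circ)$, i.e.\ satisfies $\mathcal{B}(x\circ y,z)=\mathcal{B}(y,[x,z])$. The key point is that I cannot directly quote Corollary~\ref{cor:corres}, because here $\mathcal{B}$ need not be symmetric. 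However, the computation in the first half of that corollary only uses invariance: writing $[x,y]=x\circ y-y\circ x$ and applying invariance twice gives
\begin{equation*}
\mathcal{B}([x,y],z)=\mathcal{B}(x\circ y,z)-\mathcal{B}(y\circ x,z)=\mathcal{B}(y,[x,z])-\mathcal{B}(x,[y,z]),
\end{equation*}
which after rearrangement is precisely the desired identity
\begin{equation*}
\mathcal{B}([x,y],z)+\mathcal{B}(y,[z,x])+\mathcal{B}(x,[y,z])=0.
\end{equation*}

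The only mildly nontrivial point, and the one place I would be careful, is recognizing that in the converse the invariant form produced by Proposition~\ref{pro:equivalence and invariance} need not be symmetric, so the cyclic identity takes the asymmetric form stated in the corollary rather than the commutative 2-cocycle identity of Eq.~(\ref{eq:2-cocycle}); nevertheless, the derivation of this cyclic identity from invariance goes through verbatim, since it uses only $[x,y]=x\circ y-y\circ x$ and the invariance relation, not symmetry of $\mathcal{B}$.
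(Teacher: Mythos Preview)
Your proof is correct and follows essentially the same route as the paper, which simply states that the corollary ``follows from Proposition~\ref{pro:equivalence and invariance} and Corollary~\ref{cor:corres}, and their proofs.'' You have in fact made explicit exactly the point the paper leaves implicit in the phrase ``and their proofs'': in the converse direction the invariant form produced by Proposition~\ref{pro:equivalence and invariance} need not be symmetric, so one must rerun the short computation from the proof of Corollary~\ref{cor:corres} (which uses only invariance) rather than cite its statement.
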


\begin{proof}
It follows from Proposition~\ref{pro:equivalence and invariance} and Corollary~\ref{cor:corres}, and their proofs.
\end{proof}

Let $(\rho,V)$ be a representation of a Lie algebra $(\frak
g,[-,-])$. Then there is a Lie algebra structure on  the direct
sum $\frak g\oplus V$ of vector spaces (the {\bf semi-direct
product}) defined by
$$
[x+u,y+v]=[x,y]+\rho(x)v-\rho(y)u,\;\;\forall x,y\in \frak g,
u,v\in V.
$$
We denote it by $\frak g\ltimes_{\rho}V$. Furthermore, there is the
following construction of
nondegenerate commutative 2-cocycles from anti-pre-Lie algebras.

\begin{pro}\label{pro:commutative 2-cocycles on semi-direct Lie algebras}
Let $(A,\circ)$ be an anti-pre-Lie algebra and $(\frak
g(A),[-,-])$ be the sub-adjacent Lie algebra. Define a bilinear
form $\mathcal{B}$ on $A\oplus A^{*}$ by
\begin{equation}\label{eq:pro:commutative 2-cocycles on semi-direct Lie algebras1}
\mathcal{B}(x+a^{*},y+b^{*})=\langle x,b^{*}\rangle+\langle
a^{*},y\rangle, \forall x,y\in A, a^{*},b^{*}\in A^{*}.
\end{equation}
Then $\mathcal{B}$ is a nondegenerate commutative 2-cocycle on the
Lie algebra $\frak g(A)\ltimes_{-\mathcal{L}^{*}_{\circ}}A^*$.
Conversely, let $(\frak g,[-,-])$ be a Lie algebra and
$(\rho,\frak g^*)$ be a representation. Suppose that the bilinear
form given by Eq.~(\ref{eq:pro:commutative 2-cocycles on
semi-direct Lie algebras1}) is a commutative 2-cocycle on $\frak
g\ltimes_{\rho}\frak g^{*}$. Then there is a compatible
anti-pre-Lie algebra structure $(\frak g,\circ)$ on $(\frak
g,[-,-])$ such that $\rho=-\mathcal{L}^{*}_{\circ}$.
\end{pro}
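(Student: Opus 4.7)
The plan is to reduce both directions to a direct calculation, exploiting the fact that the semi-direct bracket
\[
[x + a^{*}, y + b^{*}] = [x, y] + \rho(x)b^{*} - \rho(y)a^{*}
\]
involves the $\circ$-operation only through a single action of $\rho$ on $A^{*}$, so the cyclic 2-cocycle sum never produces nested products and collapses to a linear identity.

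For the forward direction, the bilinear form $\mathcal{B}$ is manifestly symmetric and nondegenerate, being the canonical hyperbolic pairing between $A$ and $A^{*}$. By Proposition~\ref{pro:anti-pre-Lie is Lie-admissible}, $(-\mathcal{L}_{\circ}, A)$ is a representation of $(\frak g(A), [-,-])$, hence the dual $(-\mathcal{L}^{*}_{\circ}, A^{*})$ is also a representation, and $\frak g(A) \ltimes_{-\mathcal{L}^{*}_{\circ}} A^{*}$ is a genuine Lie algebra. To verify the commutative 2-cocycle identity, I expand $\mathcal{B}([X,Y],Z)+\mathcal{B}([Y,Z],X)+\mathcal{B}([Z,X],Y)$ for $X = x+a^{*}$, $Y = y+b^{*}$, $Z = z+c^{*}$, using $\langle -\mathcal{L}^{*}_{\circ}(x)b^{*}, z\rangle = \langle b^{*}, x \circ z\rangle$. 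Grouping the six resulting scalar terms by the dual factor $a^{*}$, $b^{*}$, or $c^{*}$ they pair with, each group reduces to an expression of the form $\langle c^{*}, [u,v]-(u\circ v - v\circ u)\rangle$, which vanishes by the definition of the sub-adjacent bracket in Eq.~\eqref{eq:defi:anti-pre-Lie algebras3}.

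For the converse, I first recover the $\circ$-operation on $\frak g$. In finite dimensions the equation $\rho(x) = -\mathcal{L}^{*}_{\circ}(x)$ determines a unique operator $\mathcal{L}_{\circ}(x) \in \mathrm{End}(\frak g)$ characterized by $\langle \xi, \mathcal{L}_{\circ}(x)y\rangle = \langle \rho(x)\xi, y\rangle$ for all $\xi \in \frak g^{*}$, and I set $x \circ y := \mathcal{L}_{\circ}(x)y$. Dualizing the representation identity $\rho([x,y]) = \rho(x)\rho(y)-\rho(y)\rho(x)$ shows that $(-\mathcal{L}_{\circ}, \frak g)$ is a representation of $(\frak g, [-,-])$, which is exactly Eq.~\eqref{eq:defi:anti-pre-Lie algebras1}. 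To establish Lie-admissibility with $[x,y] = x\circ y - y\circ x$, I apply the hypothesis that $\mathcal{B}$ is a commutative 2-cocycle to $X = x$, $Y = y$ (both in $\frak g$) and $Z = c^{*} \in \frak g^{*}$; the three terms of the cyclic sum are $\langle [x,y], c^{*}\rangle$, $\langle \rho(y)c^{*}, x\rangle = \langle c^{*}, y\circ x\rangle$, and $\langle -\rho(x)c^{*}, y\rangle = -\langle c^{*}, x\circ y\rangle$, so the hypothesis yields $\langle c^{*}, [x,y]-x\circ y + y\circ x\rangle = 0$ for all $c^{*}$. Combining Lie-admissibility with the representation property and invoking Proposition~\ref{pro:anti-pre-Lie is Lie-admissible}(3) concludes that $(\frak g, \circ)$ is an anti-pre-Lie algebra compatible with $(\frak g, [-,-])$, and $\rho = -\mathcal{L}^{*}_{\circ}$ holds by construction.

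There is no deep obstacle; the main care is bookkeeping the sign conventions through the chain of dualizations $\rho \leftrightarrow -\mathcal{L}^{*}_{\circ} \leftrightarrow -\mathcal{L}_{\circ}$ and organizing the six-term cyclic expansion. Note that the cases where all three arguments lie in $\frak g$ or all three in $\frak g^{*}$ contribute trivially (since $\mathcal{B}$ vanishes on $\frak g \times \frak g$ and on $\frak g^{*} \times \frak g^{*}$, and $\frak g^{*}$ is abelian in the semi-direct product), and the ``one-$\frak g$, two-$\frak g^{*}$'' case also collapses to $0 = 0$; only the ``two-$\frak g$, one-$\frak g^{*}$'' case carries information, and it encodes precisely the sub-adjacent-bracket identification that completes the converse.
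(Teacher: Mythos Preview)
Your forward direction is the same direct computation the paper has in mind (and in fact spells out in a suppressed passage). Your converse, however, takes a different route from the paper. The paper applies Theorem~\ref{thm:commutative 2-cocycles and anti-pre-Lie algebras} to the nondegenerate form $\mathcal{B}$ on the whole space $\frak g\ltimes_{\rho}\frak g^{*}$, obtaining an anti-pre-Lie product there via Eq.~(\ref{eq:thm:commutative 2-cocycles and anti-pre-Lie algebras}); it then observes that $\mathcal{B}(x\circ y,z)=\mathcal{B}(y,[x,z])=0$ for $x,y,z\in\frak g$ forces $x\circ y\in\frak g$, so the product restricts to $\frak g$, and finally identifies $\rho=-\mathcal{L}^{*}_{\circ}$ from $\mathcal{B}(x\circ y,a^{*})=\mathcal{B}(y,[x,a^{*}])$. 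You instead define $\circ$ on $\frak g$ directly as the transpose of $\rho$, read Eq.~(\ref{eq:defi:anti-pre-Lie algebras1}) off the representation identity for $\rho$, and use the 2-cocycle condition on the triple $(x,y,c^{*})$ to get $[x,y]=x\circ y-y\circ x$, feeding both into Proposition~\ref{pro:anti-pre-Lie is Lie-admissible}(3). Your argument is more self-contained (it does not go through anti-$\mathcal O$-operators via Theorem~\ref{thm:commutative 2-cocycles and anti-pre-Lie algebras}) and makes transparent that only the ``two-$\frak g$, one-$\frak g^{*}$'' instance of the 2-cocycle hypothesis is actually needed; the paper's argument is shorter because it recycles the existing machinery.
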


\begin{proof}
It is straightforward to show that $\mathcal{B}$ is a
nondegenerate commutative 2-cocycle on the Lie algebra $\frak
g(A)\ltimes_{-\mathcal{L}^{*}_{\circ}}A^*$. \delete{ Let $x,y,z\in
A, a^{*},b^{*},c^{*}\in A^{*}$. Then
we have
\begin{eqnarray*}
&&\mathcal{B}([x+a^{*},y+b^{*}],z+c^{*})
=\langle[x,y],c^{*}\rangle+\langle b^{*},x\circ z\rangle-\langle
a^{*},y\circ z\rangle,\\
&&\mathcal{B}([y+b^{*},z+c^{*}],x+a^{*})=\langle
[y,z],a^{*}\rangle+\langle c^{*},y\circ x\rangle-\langle
b^{*},z\circ x\rangle,\\
&&\mathcal{B}([z+c^{*},x+a^{*}],y+b^{*})=\langle[z,x],b^{*}\rangle+\langle
a^{*},z\circ y\rangle-\langle c^{*},x\circ y\rangle.
\end{eqnarray*}
 Thus
$$\mathcal{B}([x+a^{*},y+b^{*}],z+c^{*})+\mathcal{B}([y+b^{*},z+c^{*}],x+a^{*})+\mathcal{B}([z+c^{*},x+a^{*}],y+b^{*})=0.$$
}
Conversely, 
by Theorem~\ref{thm:commutative 2-cocycles and anti-pre-Lie
algebras}, there is a compatible anti-pre-Lie algebra structure
given by a bilinear operation $\circ$ on $\frak
g\ltimes_{\rho}\frak g^{*}$ defined by
Eq.~(\ref{eq:thm:commutative 2-cocycles and anti-pre-Lie
algebras}). In particular, we have
$$\mathcal B(x\circ y,z)=\mathcal B(y,[x,z])=0,\;\;\forall
x,y,z\in \frak g.$$ So {for all} $x,y\in \frak g$, $x\circ y\in
\frak g$ and thus $(\frak g,\circ)$ is a compatible anti-pre-Lie
algebra on $(\frak g,[-,-])$. Moreover,
\begin{equation*}
\langle y, -\mathcal L^*_\circ(x) a^*\rangle=\langle x\circ y,
a^{*}\rangle=\mathcal B(x\circ y, a^*)=\mathcal
B(y,[x,a^*])=\langle y,\rho(x)a^{*}\rangle, \;\; \forall x,y\in
\frak g, a^{*}\in \frak g^{*}.
\end{equation*}
Hence $\rho=-\mathcal{L}^{*}_{\circ}$.
\end{proof}

At the end of this subsection, we consider a construction of
anti-pre-Lie algebras from symmetric bilinear forms, where in
particular these bilinear forms are invariant.

\begin{pro}\label{ex:anti-pre-Lie algebras from symmetric bilinear forms}
Let $\mathcal{B}$ be a symmetric bilinear form on a vector space
$A$ and $s\in A$ be a fixed vector. Define a bilinear operation
$\circ:A\otimes A\rightarrow A$ by
\begin{equation}\label{eq:ex:anti-pre-Lie algebras from symmetric bilinear forms}
x\circ y=\mathcal{B}(x,y)s-\mathcal{B}(x,s)y,\;\;\forall x,y\in A.
\end{equation}
 Then $(A,\circ)$ is an anti-pre-Lie
algebra. Moreover, $\mathcal{B}$ is invariant on $(A,\circ)$ and
thus $\mathcal{B}$ is a commutative 2-cocycle on the sub-adjacent
Lie algebra $(\frak g(A),[-,-])$.
\end{pro}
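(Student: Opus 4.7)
The plan is to verify everything by direct substitution, exploiting two simple observations that make the cancellations transparent. First, note that $x\circ s=\mathcal{B}(x,s)s-\mathcal{B}(x,s)s=0$ for every $x\in A$, so $s$ is a right absorber. Second, by symmetry of $\mathcal{B}$, the commutator simplifies to
\begin{equation*}
[x,y]=x\circ y-y\circ x=\mathcal{B}(y,s)x-\mathcal{B}(x,s)y,
\end{equation*}
which is a rank-one linear combination of $x$ and $y$ with scalar coefficients depending only on pairings against $s$.

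Using these two observations, I would verify Eq.~(\ref{eq:defi:anti-pre-Lie algebras1}) by expanding $x\circ(y\circ z)=\mathcal{B}(y,z)\,(x\circ s)-\mathcal{B}(y,s)\,(x\circ z)$ and using $x\circ s=0$, so that $x\circ(y\circ z)-y\circ(x\circ z)$ reduces to a multiple of $s$; at the same time $[y,x]\circ z$ expands via the formula for $[y,x]$ and collapses to the same multiple of $s$. Both sides come out to $\bigl(\mathcal{B}(x,s)\mathcal{B}(y,z)-\mathcal{B}(y,s)\mathcal{B}(x,z)\bigr)s$, so Eq.~(\ref{eq:defi:anti-pre-Lie algebras1}) holds. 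For Eq.~(\ref{eq:defi:anti-pre-Lie algebras2}), the computation of $[x,y]\circ z$ already gives a multiple of $s$, and the cyclic sum becomes
\begin{equation*}
\bigl(\mathcal{B}(y,s)\mathcal{B}(x,z)-\mathcal{B}(x,s)\mathcal{B}(y,z)+\mathcal{B}(z,s)\mathcal{B}(y,x)-\mathcal{B}(y,s)\mathcal{B}(z,x)+\mathcal{B}(x,s)\mathcal{B}(z,y)-\mathcal{B}(z,s)\mathcal{B}(x,y)\bigr)s,
\end{equation*}
which vanishes termwise by the symmetry of $\mathcal{B}$. This establishes that $(A,\circ)$ is an anti-pre-Lie algebra.

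For the invariance of $\mathcal{B}$, I would expand both sides of Eq.~(\ref{eq:thm:commutative 2-cocycles and anti-pre-Lie algebras}) directly: $\mathcal{B}(x\circ y,z)=\mathcal{B}(x,y)\mathcal{B}(s,z)-\mathcal{B}(x,s)\mathcal{B}(y,z)$, and $\mathcal{B}(y,[x,z])=\mathcal{B}(z,s)\mathcal{B}(y,x)-\mathcal{B}(x,s)\mathcal{B}(y,z)$, which agree by the symmetry of $\mathcal{B}$. Finally, the fact that $\mathcal{B}$ is a commutative 2-cocycle on $(\mathfrak{g}(A),[-,-])$ then follows at once from Corollary~\ref{cor:corres}.

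There is no real obstacle here beyond bookkeeping; the key labor-saving step is recognizing $x\circ s=0$ up front, which forces every relevant composition to land in the line spanned by $s$ and reduces all identities to manipulations of the scalars $\mathcal{B}(-,s)$ and $\mathcal{B}(-,-)$ under the symmetry of $\mathcal{B}$.
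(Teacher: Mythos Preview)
Your proof is correct and follows essentially the same direct-verification approach as the paper (which records the proof simply as ``straightforward'' and, in its working computations, expands $x\circ(y\circ z)$, $[x,y]\circ z$, and $\mathcal{B}(x\circ y,z)$ exactly as you do). Your preliminary observation that $x\circ s=0$ is a pleasant shortcut---it collapses $x\circ(y\circ z)$ to $-\mathcal{B}(y,s)(x\circ z)$ in one step rather than expanding all four terms---but the underlying method is identical, and your appeal to Corollary~\ref{cor:corres} for the 2-cocycle conclusion is also what the paper intends.
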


\begin{proof}
    It is straightforward.
\end{proof}

\delete{
\begin{proof}
Let $x,y,z\in A$. Then 
we have
\begin{eqnarray*}
x\circ(y\circ z)
&=&\mathcal{B}(y,z)\mathcal{B}(x,s)s-\mathcal{B}(y,z)\mathcal{B}(x,s)s-\mathcal{B}(y,s)\mathcal{B}(x,z)s+\mathcal{B}(y,s)\mathcal{B}(x,s)z\\
\mbox{} &=&-\mathcal{B}(y,s)\mathcal{B}(x,z)s+\mathcal{B}(y,s)\mathcal{B}(x,s)z,\\
\mbox{}[x,y]\circ z&=&(\mathcal{B}(y,s)x-\mathcal{B}(x,s)y)\circ
z=
-\mathcal{B}(x,s)\mathcal{B}(y,z)s+\mathcal{B}(y,s)\mathcal{B}(x,z)s.
\end{eqnarray*}
Therefore we have
\begin{eqnarray*}
&&x\circ(y\circ z)-y\circ(x\circ z)-[y,x]\circ
z\\
&&=\mathcal{B}(x,s)\mathcal{B}(y,z)s-\mathcal{B}(y,s)\mathcal{B}(x,z)s-\mathcal{B}(x,s)\mathcal{B}(y,z)s+\mathcal{B}(y,s)\mathcal{B}(x,z)s=0,\\
&&[x,y]\circ z+[y,z]\circ x+[z,x]\circ y\\
&&=-\mathcal{B}(x,s)\mathcal{B}(y,z)s+\mathcal{B}(y,s)\mathcal{B}(x,z)s-\mathcal{B}(y,s)\mathcal{B}(z,x)s+\mathcal{B}(z,s)\mathcal{B}(y,x)s\\
&&\hspace{0.4cm}
-\mathcal{B}(z,s)\mathcal{B}(x,y)s+\mathcal{B}(x,s)\mathcal{B}(z,y)s=0.
\end{eqnarray*}
Hence $(A,\circ)$ is an anti-pre-Lie algebra. Moreover, we have
$$\mathcal{B}(y,[x,z])=\mathcal{B}(z,s)\mathcal{B}(y,x)-\mathcal{B}(x,s)\mathcal{B}(y,z)=\mathcal{B}(x\circ y,z),\;\;\forall x,y,z\in A.$$
Therefore $\mathcal B$ is invariant on $(A,\circ)$.
\end{proof}
}


\begin{lem}$($\cite{Bai2004}$)$\label{lem}
Let $(A,\circ)$ be a complex vector space with
a bilinear operation $\circ:A\otimes A\rightarrow A$. Suppose that
the following conditions are satisfied.
\begin{enumerate}
\item $A=A_{1}\oplus A_{2}$ as the direct sum of two vector spaces
and $A_{1}$ is a subalgebra. \item For any $x\in A_1$, the action
of $\mathcal{L}_{\circ}(x)$ and $\mathcal{R}_{\circ}(x)$ on
$A_{2}$ is zero or $\mathrm{id}$, where $\mathcal{R}_{\circ}(x)
(y)=y\circ x, \forall y\in A_2$. \item There exists a non-zero
vector $v\in A_{1}$ such that $x\circ y=y\circ x\in\mathbb{C}v,
\forall x,y\in A_{2}$.
\end{enumerate}
Then the classification of the algebraic operation in $A_{2}$
(without changing other products) is given by the classification
of symmetric bilinear forms on an $n$-dimensional complex vector
space, where $n=\mathrm{dim}A_{2}$. That is, there exists a basis
$\{ e_{1},\cdots, e_{n}\}$ of $A_{2}$ such that the classification
is given as follows: $A_{2}$ is trivial or for every
$k=1,\cdots,n$:
$$e_{i}\circ e_{j}=
\begin{cases}
\delta_{ij}v& i,j=1,\cdots,k;\\
0& \text{otherwise.}
\end{cases} $$
\end{lem}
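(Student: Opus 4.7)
The plan is to reduce the classification of the restricted operation on $A_2$ to the classical classification of symmetric bilinear forms over $\mathbb{C}$. First I would exploit condition (3) to encode the operation on $A_2$ by a bilinear form: since $x\circ y \in \mathbb{C} v$ for all $x,y \in A_2$, there is a well-defined $\mathcal{B}: A_2 \otimes A_2 \to \mathbb{C}$ determined by $x \circ y = \mathcal{B}(x,y)\, v$, and the commutativity $x\circ y = y\circ x$ forces $\mathcal{B}$ to be symmetric. Conversely, any symmetric bilinear form on $A_2$ recovers via the same formula an operation on $A_2$ satisfying (3), so the data of $\circ|_{A_2\otimes A_2}$ is equivalent to the data of a symmetric bilinear form on $A_2$.

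Next I would pin down the notion of equivalence that makes the problem purely linear. For any linear isomorphism $\varphi:A_2 \to A_2$, extend it to $\tilde\varphi: A \to A$ by the identity on $A_1$. This $\tilde\varphi$ preserves every product not involving two vectors of $A_2$: products inside $A_1$ are untouched because $\tilde\varphi|_{A_1}=\mathrm{id}$ and $A_1$ is a subalgebra by (1), while the mixed products $x\circ u = \mathcal{L}_\circ(x)(u)$ and $u\circ x = \mathcal{R}_\circ(x)(u)$ for $x\in A_1$, $u\in A_2$ are preserved because, by (2), $\mathcal{L}_\circ(x)$ and $\mathcal{R}_\circ(x)$ act on $A_2$ as $0$ or $\mathrm{id}$, both of which commute with any $\mathbb{C}$-linear $\varphi$. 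Hence two operations on $A$ sharing the same $A_1, A_2, v$ and the same ``other products'' are isomorphic through such a $\tilde\varphi$ if and only if the associated symmetric bilinear forms on $A_2$ are related by a change of basis.

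Finally I would invoke the standard classification of symmetric bilinear forms on an $n$-dimensional complex vector space: every such form admits a basis in which it is diagonal with entries $(1,\ldots,1,0,\ldots,0)$, the number of $1$'s being its rank $k \in \{0,1,\ldots,n\}$. Translating through the correspondence $\mathcal{B} \leftrightarrow \circ$ produces exactly the stated normal forms, namely either $A_2$ is trivial (the rank zero case) or there is a basis $\{e_1,\ldots,e_n\}$ of $A_2$ such that $e_i\circ e_j = \delta_{ij} v$ for $1\le i,j\le k$ and $e_i \circ e_j = 0$ otherwise. The only subtle step is the compatibility argument in the previous paragraph; once one verifies that an arbitrary change of basis of $A_2$ does not disturb the products linking $A_1$ and $A_2$, the classification is a direct translation of the bilinear-form classification, and I expect this bookkeeping to be the main (though mild) obstacle.
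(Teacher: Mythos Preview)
Your argument is correct. The paper itself does not supply a proof of this lemma; it is quoted from \cite{Bai2004} and used as a black box in the subsequent classification proposition. Your reduction---encoding $\circ|_{A_2\otimes A_2}$ by a symmetric bilinear form via $x\circ y=\mathcal{B}(x,y)v$, checking that an arbitrary $\mathrm{GL}(A_2)$-change of basis extended by the identity on $A_1$ preserves both the subalgebra products in $A_1$ and the mixed products (since $0$ and $\mathrm{id}$ commute with every linear map), and then invoking the rank classification of complex symmetric bilinear forms---is exactly the natural proof and is what the cited reference does. There is no gap.
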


\begin{pro}\label{pro:classification}
Let $(A,\circ)$ be a complex anti-pre-Lie algebra given in
Proposition~ \ref{ex:anti-pre-Lie algebras from symmetric bilinear
forms} with a basis $\{e_{1},\cdots,e_{n}\}$. Then $(A,\circ)$ is
isomorphic to one of the following mutually non-isomorphic cases (only non-zero products
are given):
\begin{enumerate}
\item[$(B1)_{\mbox{\ }}$] $(A,\circ)$ is trivial;

\item[$(B2)_k$] ($k\in\{ 2,\cdots, n\}$) $e_{j}\circ e_{j}=e_{1},
j=2,\cdots, k$;

\item[$(B3)_{\mbox{\ }}$] $e_{1}\circ e_{j}=e_{j}, j=2,\cdots, n$;

\item[$(B4)_k$]  ($k\in\{ 2,\cdots, n\}$) $e_{1}\circ
e_{j}=e_{j},e_{l}\circ e_{l}=e_{1}, j=2,\cdots, n, l=2,\cdots, k$;

\item[$(B5)_{\mbox{\ }}$] $e_{1}\circ e_{2}=-e_{1},e_{2}\circ
e_{j}=e_{j},
 j=2,\cdots, n$;


\item[$(B6)_k$] ($k\in\{ 3,\cdots, n\}$) $e_{1}\circ
e_{2}=-e_{1},e_{2}\circ e_{2}=e_{2}, e_{2}\circ e_{j}=e_{j},
e_{l}\circ e_{l}=e_{1}, j=3,\cdots, n, l=3,\cdots, k$.
\end{enumerate}
\end{pro}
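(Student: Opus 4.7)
The plan is to reduce the classification to a normalization of the pair $(\mathcal{B}, s)$ up to a change of basis of $A$, and then to extract the form of $\circ$ from the normalized data, applying Lemma~\ref{lem} on a suitable complementary subspace to reduce to the classification of a symmetric bilinear form. The case split is driven by two invariants of $(\mathcal{B},s)$: whether $s=0$, and (when $s\neq 0$) whether the linear functional $f(x)=\mathcal{B}(x,s)$ is zero, and if it is nonzero, whether $\mathcal{B}(s,s)=0$.

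First, if $s=0$, then $x\circ y=0$ for all $x,y$, giving case $(B1)$. Assume $s\neq 0$. I would next treat the case $f=0$: here $\mathcal{B}(\cdot, s)\equiv 0$ so $x\circ y=\mathcal{B}(x,y)s$, and picking $e_1=s$ together with a basis of a complement on which $\mathcal{B}$ is diagonalized produces either the trivial algebra (subsumed into $(B1)$) or case $(B2)_k$, where $k-1$ is the rank of $\mathcal{B}$. For $f\neq 0$ and $\mathcal{B}(s,s)\neq 0$, rescale $s$ so that $\mathcal{B}(s,s)=-1$, set $e_1=s$, and choose the complement $W=\{y:\mathcal{B}(y,s)=0\}$; one checks $e_1\circ e_j=e_j$ for any $e_j\in W$ and that the products among elements of $W$ are governed solely by $\mathcal{B}|_W$ (taking values in $\mathbb{C}e_1$), so diagonalizing $\mathcal{B}|_W$ yields $(B3)$ (when $\mathcal{B}|_W=0$) or $(B4)_k$ (when $\mathcal{B}|_W$ has rank $k-1\geq 1$). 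For $f\neq 0$ and $\mathcal{B}(s,s)=0$, pick $t\in A$ with $\mathcal{B}(t,s)=-1$; by replacing $t$ with $t+\tfrac{1}{2}\mathcal{B}(t,t)s$ we may also arrange $\mathcal{B}(t,t)=0$. Set $e_1=s$, $e_2=t$, and choose a complement $U$ with $\mathcal{B}(u,s)=\mathcal{B}(u,t)=0$ for $u\in U$; direct computation using \eqref{eq:ex:anti-pre-Lie algebras from symmetric bilinear forms} gives $e_1\circ e_2=-e_1$, $e_2\circ e_2=e_2$, $e_2\circ u=u$ and all other products among $\{e_1,e_2,U\}$ concentrated inside $\mathbb{C}e_1$ through $\mathcal{B}|_U$. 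Diagonalizing $\mathcal{B}|_U$ produces $(B5)$ or $(B6)_k$ with $k\geq 3$.

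In each of the four nontrivial cases the reduction of the bilinear form on the complementary subspace is exactly of the shape addressed by Lemma~\ref{lem}: $A_1$ is the subalgebra generated by $\{s\}$ (or $\{s,t\}$), $A_2$ is the complement, the left and right multiplication operators by the spanning elements of $A_1$ act on $A_2$ as $0$ or $\mathrm{id}$ (as shown by the computations above), and products inside $A_2$ land in $\mathbb{C}s=\mathbb{C}e_1$ symmetrically. Applying the lemma gives the stated normal form of the residual products in terms of $k$.

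Finally, to justify that the listed algebras are mutually non-isomorphic, I would separate them by simple invariants: the dimensions of the image of $\circ$, of the left/right annihilators $\{x:x\circ A=0\}$ and $\{x:A\circ x=0\}$, of the derived algebra of the sub-adjacent Lie algebra $\mathfrak g(A)$, and the parameter $k$ (which is recovered intrinsically as one plus the rank of the restriction of $\mathcal{B}$ to an appropriate invariant complement, or equivalently as the codimension of a suitable annihilator). The main obstacle I anticipate is not the invariance of $\mathcal{B}$ or the case division, which are largely bookkeeping, but arranging the normalizations of $s$ and the complement consistently so that the chosen basis actually yields the exact scalars appearing in $(B1)$–$(B6)$; all other steps are routine calculation given \eqref{eq:ex:anti-pre-Lie algebras from symmetric bilinear forms} and Lemma~\ref{lem}.
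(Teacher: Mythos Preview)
Your proposal is correct and follows essentially the same route as the paper: both split into the cases $f=\mathcal{B}(\cdot,s)\equiv 0$, $\mathcal{B}(s,s)\neq 0$, and $\mathcal{B}(s,s)=0$ with $f\neq 0$, normalize the data, and then invoke Lemma~\ref{lem} to diagonalize the residual symmetric form on the complement. Your presentation is slightly more coordinate-free (normalizing $(\mathcal{B},s)$ directly rather than writing out explicit basis changes) and you address mutual non-isomorphism with explicit invariants, which the paper leaves implicit.
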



\begin{proof}
Without losing generality, we assume that $s=e_{1}$. If
$\mathrm{dim}A=1$, then we get $(B1)$. Next we assume that
$\mathrm{dim}A\geq 2$. Set
$$b_{ij}=b_{ji}=\mathcal{B}(e_{i},e_{j}),\;\; \forall i,j=1,\cdots,
n.$$ Then by Eq.~(\ref{eq:ex:anti-pre-Lie algebras from symmetric
bilinear forms}), we have
\begin{equation}\label{eq:classification of bilinear form}
e_{i}\circ e_{j}=b_{ij}e_{1}-b_{1i}e_{j}, \ \ i,j=1,\cdots, n.
\end{equation}
\begin{enumerate}
\item[(1)] Suppose that
$b_{1i}=\mathcal{B}(e_{1},e_{i})=0,\;\;\forall i=1,\cdots, n$.
Then by Eq.~(\ref{eq:classification of bilinear form}), the
anti-pre-Lie algebra $(A,\circ)$ is given by the following
non-zero products:
$$e_{i}\circ e_{j}=b_{ij}e_{1}, \ \ i,j=2,\cdots, n.$$
Let $A_{1}=\mathbb{C}e_{1}$ be a subalgebra and $A_{2}$ be a
subspace spanned by $e_{2},\cdots, e_{n}$. Then by Lemma
\ref{lem}, the classification is given by $(B1)$ and $(B2)$.

\item[(2)] Suppose that $b_{11}=\mathcal{B}(e_{1},e_{1})\neq 0.$
Then there are vectors $e_{2},\cdots, e_{n}$ such that
$\{e_{1},\cdots, e_{n}\}$ is a basis and
$b_{1j}=\mathcal{B}(e_{1},e_{j})=0,\;\forall j=2,\cdots, n$. Then
by Eq.~(\ref{eq:classification of bilinear form}), the
anti-pre-Lie algebra $(A,\circ)$ is given by the following
non-zero products:
$$e_{1}\circ e_{j}=-b_{11}e_{j}, e_{j}\circ e_{k}=b_{jk}e_{1}, \ \  j,k=2,\cdots, n.$$
Let $e_{1}\rightarrow -\dfrac{1}{b_{11}}e_{1}, e_{j}\rightarrow
\dfrac{\mathrm{1}}{\sqrt{-b_{11}}}e_{j}, j=2,\cdots, n$. Then we
can assume that $b_{11}=-1$. Let $A_{1}=\mathbb{C}e_{1}$ be a
subalgebra and $A_{2}$ be a subspace spanned by $e_{2},\cdots,
e_{n}$. Then by Lemma \ref{lem}, the classification is given by
$(B3)$ and $(B4)$.

\item[(3)] Suppose that $b_{11}=\mathcal{B}(e_{1}, e_{1})=0$. Then
there exists a $u\in A$ such that $\mathcal{B}(u,e_{1})\neq 0$.
Thus we let $u=e_{2}$ and if $\mathrm{dim}A>2$, then
 there are vectors $e_{3},\cdots, e_{n}$
 such that $\{ e_{1}, \cdots, e_{n}\}$ is a
basis and $b_{1j}=\mathcal{B}(e_{1},e_{j})=0,\;\forall j=3,\cdots,
n$. Thus by Eq.~(\ref{eq:classification of bilinear form}), the
anti-pre-Lie algebra $(A,\circ)$ is given by the following
non-zero products:
$$e_{1}\circ e_{2}=b_{12}e_{1}, e_{2}\circ e_{2}=b_{22}e_{1}-b_{12}e_{2}, e_{2}\circ e_{j}=b_{2j}e_{1}-b_{12}e_{j}, $$
$$e_{j}\circ e_{2}=b_{2j}e_{1}, e_{j}\circ e_{l}=b_{jl}e_{l},\ \  j,l=3,\cdots, n.$$
Let
$$e_{1}\rightarrow e_{1}, e_{2}\rightarrow \dfrac{b_{22}}{2b^{2}_{12}}e_{1}-\dfrac{1}{b_{12}}e_{2}, e_{j}\rightarrow -\dfrac{b_{2j}}{b_{12}}e_{1}+e_{j}, \ \ j=3,\cdots, n. $$
Under the new basis, the non-zero products are given by:
$$e_{1}\circ e_{2}=-e_{1}, e_{2}\circ e_{2}=e_{2}, e_{2}\circ e_{j}=e_{j}, e_{j}\circ e_{l}=b_{jl}e_{1}, \ \ j,l=3,\cdots, n.$$
Let $A_{1}$ be a subalgebra spanned by $e_{1},e_{2}$, and $A_{2}$
be a subspace spanned by $e_{3},\cdots, e_{n}$. Then by Lemma
\ref{lem}, the classification is given by $(B5)$ and $(B6)$.
\end{enumerate}
Hence the conclusion holds.
\end{proof}

\begin{rmk}
Among the anti-pre-Lie algebras given in
Proposition~\ref{pro:classification}, we have the following
results.
\begin{enumerate}
\item Associative algebras: $(B1)$ and $(B2)_k$ ($k\in \{2,\cdots,
n\}$). All of them are commutative.
\item Anti-associative
algebras: $(B1)$ and $(B2)_k$ ($k\in \{2,\cdots, n\}$).
\item
Pre-Lie algebras: $(B1)$, $(B2)_k$ ($k\in \{2,\cdots, n\}$) and
$(B3)$.
\item Suppose the bilinear form in Eq.~(\ref{eq:ex:anti-pre-Lie algebras from symmetric bilinear forms}) is nondegenerate. If $\mathrm{dim}A=1$, then $(A,\circ)$ is isomorphic to $(B1)$; if $\mathrm{dim}A=2$, then $(A,\circ)$ is isomorphic to $(B4)_{2}$ or $(B5)$; if $\mathrm{dim}A=n\geq 3$, then $(A,\circ)$ is isomorphic to $(B4)_{n}$ or $(B6)_{n}$.
\item If $\mathrm{dim}A=2$, then $(B3)$ is isomorphic to
$(A2)_0$, $(B4)_2$ is isomorphic to $(A1)$, and $(B5)$ is
isomorphic to $(A4)_0$, where we use the notations in
Proposition~\ref{thm:classification}.
\end{enumerate}
\end{rmk}


\subsection{Comparison between anti-pre-Lie algebras and pre-Lie algebras} We summarize the study in the previous subsections to
exhibit a clear analogy between anti-pre-Lie algebras and pre-Lie
algebras.

The study in the previous subsections allows us to compare
anti-pre-Lie algebras and pre-Lie algebras in terms of the
following properties (for the part involving pre-Lie algebras, see
\cite{Bai2004,Bai2006,Bai2007,Chu,Ku1,SS}):
\begin{enumerate}
\item Representations of the sub-adjacent Lie algebras, giving the
characterization of the algebraic structures;

\item Operators on Lie algebras, giving the characterization and
construction;

\item Bilinear forms on the sub-adjacent Lie algebras, giving the
close relationships between the nondegenerate ones and the
compatible algebraic structures;

\item Construction from linear functions;

\item Construction from symmetric bilinear forms;

\item Compatibility on simple Lie algebras.

\end{enumerate}
 We list them in Table 1. From this table, we observe
that there is a clear analogy between anti-pre-Lie algebras and
pre-Lie algebras.

\begin{table}[h]
\small
\begin{longtable}{|c|c|c|}
\multicolumn{3}{c}{\textbf{Table 1. Comparison between anti-pre-Lie algebras and pre-Lie algebras}}\\
\hline
Algebras & Anti-pre-Lie algebras $(A,\circ)$ & Pre-Lie algebras $(A,\star)$\\
\hline
\tabincell{c}{Representations of the \\ sub-adjacent Lie algebras} & $(-\mathcal{L}_{\circ},A)$ & $(\mathcal{L}_{\star},A)$\\
\hline
\tabincell{c}{Operators on Lie algebras} & (strong) anti-$\mathcal{O}$-operators & $\mathcal{O}$-operators\\
\hline
\multirow{4}{*}{\tabincell{c}{Bilinear forms on the \\ sub-adjacent Lie algebras}} & \multicolumn{2}{c|}{$\mathcal{B}([x,y],z)+\mathcal{B}([y,z],x)+\mathcal{B}([z,x],y)=0$}\\
\cline{2-3}
 & symmetric & skew-symmetric\\
 \cline{2-3}
 & $\mathcal{B}(x\circ y,z)=\mathcal{B}(y,[x,z])$ & $\mathcal{B}(x\star y,z)=-\mathcal{B}(y,[x,z])$\\
 \cline{2-3}
 & $\frak g(A)\ltimes_{-\mathcal L_\circ^*}A^*$ & $\frak g(A)\ltimes_{\mathcal L_\star^*}A^*$\\
\hline
\multirow{2}{*}{\tabincell{c}{Construction from \\ linear functions}}
& $x\circ y=g(x)y$ & $x\star y=g(x)y$\\
 \cline{2-3}
& $x\circ y= f(y)x+2f(x)y$ & $x\star y=f(y)x$\\
\hline
\tabincell{c}{Construction from \\ symmetric bilinear forms} & $x\circ y=\mathcal{B}(x,y)s-\mathcal{B}(x,s)y$ & $x\star y=\mathcal{B}(x,y)s+\mathcal{B}(x,s)y$\\
\hline
\tabincell{c}{Compatibility on \\ simple Lie algebras} & Yes & No\\
\hline
\end{longtable}
\end{table}

\section{Novikov algebras and admissible Novikov algebras}

We introduce the notion of admissible Novikov algebras as a
subclass of anti-pre-Lie algebras, corresponding to Novikov
algebras in terms of $q$-algebras (\cite{Dzh09}). Such a
correspondence gives the construction of anti-pre-Lie algebras
from commutative associative algebras with derivations or
admissible pairs and leads to the introduction of the notions of anti-pre-Lie
Poisson algebras and admissible Novikov-Poisson algebras.

\subsection{Correspondence between Novikov algebras and admissible Novikov algebras}

We introduce the notion of admissible Novikov algebras as a
subclass of anti-pre-Lie algebras, whose name comes from a
correspondence between them and Novikov algebras in terms of
$q$-algebras. The interpretation of admissible Novikov algebras in
terms of the corresponding anti-$\mathcal O$-operators and some
examples are given.

\begin{defi}(\cite{Bal, Gel})\label{defi:Novikov algebras}
A \textbf{Novikov algebra} is a pre-Lie algebra $(A,\star)$ such that
\begin{equation}\label{eq:defi:Novikov algebras1}
(x\star y)\star z=(x\star z)\star y, \forall x,y,z\in A.
\end{equation}
\end{defi}

\begin{defi}\label{defi:admissible Novikov algebras} Let $A$ be a
vector space with a bilinear operation $\circ:A\otimes A
\rightarrow A$. $(A,\circ)$ is called an \textbf{admissible
Novikov algebra} if Eq.~(\ref{eq:defi:anti-pre-Lie algebras1}) and
the following equation hold:
\begin{equation}\label{eq:defi:admissible Novikov algebras1}
2x\circ[y,z]=(x\circ y)\circ z-(x\circ z)\circ y, \;\; \forall
x,y,z\in A.
\end{equation}
\end{defi}

\begin{pro}\label{pro:admissible is anti}
An admissible Novikov algebra is an anti-pre-Lie algebra.
\end{pro}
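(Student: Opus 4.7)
The plan is to reduce to Proposition~\ref{pro:anti-pre-Lie is Lie-admissible}: since \eqref{eq:defi:anti-pre-Lie algebras1} is already built into Definition~\ref{defi:admissible Novikov algebras}, I only need to verify the cyclic identity \eqref{eq:defi:anti-pre-Lie algebras2}. Introduce the shorthand
\[
S=[x,y]\circ z+[y,z]\circ x+[z,x]\circ y,\qquad T=x\circ[y,z]+y\circ[z,x]+z\circ[x,y],
\]
so the goal is to show $S=0$ for all $x,y,z\in A$.

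First I would cyclically permute $(x,y,z)$ in \eqref{eq:defi:admissible Novikov algebras1} and add the three resulting equations. The six terms on the right collect, via the definition \eqref{eq:defi:anti-pre-Lie algebras3} of $[-,-]$, into precisely $S$, while the left becomes $2T$; hence $2T=S$.

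Second, I would invoke the chain of equalities \eqref{eq:equal} from the proof of Proposition~\ref{pro:anti-pre-Lie is Lie-admissible}, which depends only on \eqref{eq:defi:anti-pre-Lie algebras1} and yields $T=[y,x]\circ z+[z,y]\circ x+[x,z]\circ y=-S$. Substituting into $2T=S$ gives $-2S=S$, i.e., $3S=0$, and hence $S=0$ by the standing characteristic-zero assumption. This is exactly \eqref{eq:defi:anti-pre-Lie algebras2}, so by Proposition~\ref{pro:anti-pre-Lie is Lie-admissible} the algebra $(A,\circ)$ is anti-pre-Lie.

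I expect no genuine obstacle: the whole argument is a short cyclic-sum computation combined with the identity already extracted from the proof of Proposition~\ref{pro:anti-pre-Lie is Lie-admissible}. The only subtle point is the final division by $3$, which is precisely why the hypothesis ``characteristic $0$'' is used (in characteristic $3$ the argument would only yield $3S=0$, which is vacuous).
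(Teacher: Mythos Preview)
Your proposal is correct and follows essentially the same route as the paper's proof: both derive $2T=S$ by cyclically summing \eqref{eq:defi:admissible Novikov algebras1} and $T=-S$ from \eqref{eq:equal}, then combine these to conclude $S=T=0$. The only cosmetic differences are the order in which the two identities are established and your explicit mention of the division by $3$, which the paper leaves implicit.
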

\begin{proof}
Let $(A,\circ)$ be an admissible Novikov algebra, and $x,y,z\in
A$. By Eq.~(\ref{eq:defi:anti-pre-Lie algebras1}), Eq.~(\ref{eq:equal}) holds, that is,
we have
$$x\circ[y,z]+y\circ[z,x]+z\circ[x,y]=[y,x]\circ z+[z,y]\circ x+[x,z]\circ y.$$
On the other hand, by Eq.~(\ref{eq:defi:admissible Novikov
algebras1}), we have
\begin{eqnarray*}
&&2x\circ[y,z]+2y\circ[z,x]+2z\circ[x,y]\\
&&=(x\circ y)\circ z-(x\circ z)\circ y+(y\circ z)\circ x-(y\circ x)\circ z+(z\circ x)\circ y-(z\circ y)\circ x\\
&&=[x,y]\circ z+[y,z]\circ x+[z,x]\circ y.
\end{eqnarray*}
Thus
$$x\circ[y,z]+y\circ[z,x]+z\circ[x,y]=[x,y]\circ z+[y,z]\circ x+[z,x]\circ y=0. $$
Hence $(A,\circ)$ is an anti-pre-Lie algebra.
\end{proof}

We introduce the notion of ``admissible Novikov algebras" since
they ``correspond" to Novikov algebras in the following sense:
Novikov algebras and admissible Novikov algebras can be realized
as $q$-algebras  each other. Note that the $q$-algebras are used
in the classification of algebras with skew-symmetric identities
of degree 3 (\cite{Dzh09}).

\begin{defi} (\cite{Dzh09})\label{defi:q-algebra}
Let $A$ be a vector space with a bilinear operation
$\star:A\otimes A \rightarrow A$. Define a bilinear operation
$\circ:A\otimes A\rightarrow A$ by
\begin{equation}\label{eq:defi:q-algebra}
x\circ y=x\star y+qy\star x, \forall x,y\in A,
\end{equation}
for some $q\in\mathbb{F}$. Then $(A,\circ)$ is called the
\textbf{$q$-algebra} of $(A,\star)$.
\end{defi}

\begin{pro}\label{pro:anti-pre-Lie algebras from Novikov algebras}
Let $(A,\star)$ be a pre-Lie algebra, and $(A,\circ)$ be the
2-algebra of $(A,\star)$, that is,
\begin{equation}\label{eq:pro:anti-pre-Lie algebras from Novikov algebras}
x\circ y=x\star y+2y\star x,\forall x,y\in A.
\end{equation}
Then $(A,\circ)$ is an anti-pre-Lie algebra if and only if
$(A,\star)$ is further a Novikov algebra. Moreover, in this case,
$(A,\circ)$ is an admissible Novikov algebra.
\end{pro}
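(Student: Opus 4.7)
The plan is to reduce everything to the characterization in Proposition \ref{pro:anti-pre-Lie is Lie-admissible}(3), then expand the resulting identities in terms of $\star$, using the pre-Lie identity as the engine for collapsing nested products.

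First I would compute the commutator of $(A,\circ)$ directly:
\begin{equation*}
[x,y]_\circ = x\circ y - y\circ x = (x\star y + 2y\star x) - (y\star x + 2x\star y) = -[x,y]_\star.
\end{equation*}
So $[-,-]_\circ$ is the opposite of the sub-adjacent Lie bracket of $(A,\star)$ and is itself a Lie bracket; hence $(A,\circ)$ is automatically Lie-admissible. By Proposition \ref{pro:anti-pre-Lie is Lie-admissible}, checking that $(A,\circ)$ is anti-pre-Lie reduces to verifying Eq.(\ref{eq:defi:anti-pre-Lie algebras1}) together with Eq.(\ref{eq:defi:anti-pre-Lie algebras5}), since the equivalence of Eqs.(\ref{eq:defi:anti-pre-Lie algebras2}) and (\ref{eq:defi:anti-pre-Lie algebras5}) under (\ref{eq:defi:anti-pre-Lie algebras1}) is supplied by Eq.(\ref{eq:equal}).

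Next I would expand $x\circ(y\circ z) - y\circ(x\circ z) - [y,x]_\circ\circ z$ using $a\circ b = a\star b + 2b\star a$, producing twelve cubic monomials in $\star$. Applying the pre-Lie identity repeatedly in the compact form $a\star(b\star c) - b\star(a\star c) = [a,b]_\star\star c$, the double-product terms telescope against $[x,y]_\star\circ z = [x,y]_\star\star z + 2z\star[x,y]_\star$, and the entire expression collapses to a nonzero scalar multiple of $(z\star y)\star x - (z\star x)\star y$. Since this remainder vanishes for all $x,y,z$ exactly when the Novikov identity holds, this establishes the equivalence: Eq.(\ref{eq:defi:anti-pre-Lie algebras1}) for $(A,\circ) \iff (A,\star)$ is Novikov. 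This handles both directions of the first claim.

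Assuming now that $(A,\star)$ is Novikov, it remains to check Eq.(\ref{eq:defi:anti-pre-Lie algebras5}) for $(A,\circ)$ and the admissible Novikov identity. For Eq.(\ref{eq:defi:anti-pre-Lie algebras5}): expanding $x\circ[y,z]_\circ = -x\star[y,z]_\star - 2[y,z]_\star\star x$ and summing cyclically, the pre-Lie identity lets one rewrite the cyclic sum as a multiple of $\sum_{\mathrm{cyc}}[x,y]_\star\star z$; under the Novikov identity every $(a\star b)\star c$ becomes $(a\star c)\star b$, and the cyclic sum cancels termwise. For the admissible Novikov identity, I would expand both $2x\circ[y,z]_\circ$ and $(x\circ y)\circ z - (x\circ z)\circ y$ via $\circ = \star + 2\star^{\mathrm{op}}$, then apply Novikov to kill all $(a\star b)\star c - (a\star c)\star b$ terms and pre-Lie to match the remaining nested products; both sides reduce to the same expression.

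The main obstacle is purely combinatorial: each identity yields roughly a dozen cubic monomials in $\star$, and care is needed to pair them correctly. The cleanest way to keep track is to work at the operator level, writing $\mathcal{L}_\circ(x) = \mathcal{L}_\star(x) + 2\mathcal{R}_\star(x)$ and packaging the two hypotheses as $[\mathcal{L}_\star(x),\mathcal{L}_\star(y)] = \mathcal{L}_\star([x,y]_\star)$ (pre-Lie) and $[\mathcal{R}_\star(y),\mathcal{R}_\star(z)] = 0$ (Novikov); this makes the remaining manipulations systematic and the critical coefficient "$6$" (equivalently, the overall factor distinguishing characteristic $0$) transparent.
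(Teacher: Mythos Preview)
Your approach is correct and essentially identical to the paper's: both compute the commutator $[x,y]_\circ=-[x,y]_\star$ to get Lie-admissibility for free, expand $x\circ(y\circ z)-y\circ(x\circ z)-[y,x]_\circ\circ z$ using the pre-Lie identity, and reduce it to $6\big((z\star y)\star x-(z\star x)\star y\big)$, then verify the admissible Novikov identity directly. The one redundancy in your plan is the separate verification of Eq.~(\ref{eq:defi:anti-pre-Lie algebras5}): since you have already established that $(A,\circ)$ is Lie-admissible, Proposition~\ref{pro:anti-pre-Lie is Lie-admissible}(3) tells you that anti-pre-Lie is equivalent to Eq.~(\ref{eq:defi:anti-pre-Lie algebras1}) alone, so that extra cyclic check can be dropped---this is exactly how the paper streamlines it.
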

\begin{proof}
Let the sub-adjacent Lie algebra of $(A,\star)$ be $(\frak
g(A),\lbrace-,-\rbrace)$. Then by Eq.~(\ref{eq:pro:anti-pre-Lie
algebras from Novikov algebras}), we have
\begin{equation*}\label{eq:pro:anti-pre-Lie algebras from Novikov algebras2}
[x,y]=x\circ y-y\circ x=y\star x-x\star y=\lbrace y,x\rbrace,
\forall x,y\in A.
\end{equation*}
Thus $(A,\circ)$ is a Lie-admissible algebra. 
Let $x,y,z\in A$. Then we have
\begin{eqnarray*}
&&x\circ(y\circ z)-y\circ(x\circ z)+[x,y]\circ z\\
&&\overset{(\ref{eq:pro:anti-pre-Lie algebras from Novikov algebras})}{=}x\star(y\star z)+2(y\star z)\star x+2x\star(z\star y)+4(z\star y)\star x-y\star(x\star z)-2(x\star z)\star y\\
&&\hspace{0.4cm}-2y\star(z\star x)-4(z\star x)\star y+(y\star x)\star z+2z\star(y\star x)-(x\star y)\star z-2z\star(x\star y)\\
&&\overset{(\ref{eq:defi:pre-Lie algebras})}{=}2(y\star z)\star x-2y\star(z\star x)+2z\star(y\star x)-2(x\star z)\star y+2x\star(z\star y)-2z\star(x\star y)\\
&&\hspace{0.4cm}+4(z\star y)\star x-4(z\star x)\star y\\
&&\overset{(\ref{eq:defi:pre-Lie algebras})}{=}6(z\star y)\star
x-6(z\star x)\star y.
\end{eqnarray*}
Then $(A,\circ)$ is an anti-pre-Lie algebra if and only if
$(A,\star)$ is a Novikov algebra. Moreover, if  $(A,\star)$ is a
Novikov algebra, then we have
\begin{eqnarray*}
&&2x\circ[y,z]-(x\circ y)\circ z+(x\circ z)\circ y\\
&&=2x\circ(z\star y)-2x\circ(y\star z)-(x\circ y)\circ z+(x\circ z)\circ y\\
&&\overset{(\ref{eq:pro:anti-pre-Lie algebras from Novikov
algebras})}{=}2x\star(z\star y)+4(z\star y)\star x-2x\star(y\star
z)-4(y\star z)\star x
-(x\star y)\star z-2z\star(x\star y)\\
&&\hspace{0.4cm}-2(y\star x)\star z-4z\star(y\star x)+(x\star z)\star y+2y\star(x\star z)+2(z\star x)\star y+4y\star(z\star x)\\
&&\overset{(\ref{eq:defi:pre-Lie algebras}),(\ref{eq:defi:Novikov
algebras1})}{=}2(x\star z)\star y-2(x\star y)\star z=0.
\end{eqnarray*}
Thus $(A,\circ)$ is an admissible Novikov algebra.
\end{proof}


By Eq.~(\ref{eq:pro:anti-pre-Lie algebras from Novikov algebras}),
there is an equivalent expression:
\begin{equation}\label{eq:pro:admissible Novikov algebras and Novikov algebras1}
x\star y=-\dfrac{1}{3}x\circ y+\dfrac{2}{3}y\circ x, \forall
x,y\in A.
\end{equation}
In terms of $q$-algebras, we adjust the above equation to be
\begin{equation}\label{eq:pro:admissible Novikov algebras and Novikov algebras2}
x\star y=x\circ y-2y\circ x, \forall x,y\in A.
\end{equation}

\begin{pro}\label{pro:admissible Novikov algebras and Novikov algebras}
Let $(A,\circ)$ be an anti-pre-Lie algebra, and $(A,\star)$ be the
(-2)-algebra of $(A,\circ)$ given by Eq.~(\ref{eq:pro:admissible
Novikov algebras and Novikov algebras2}).
Then $(A,\star)$ is a pre-Lie algebra if and only if $(A,\circ)$
is further an admissible Novikov algebra. Moreover, in this case,
$(A,\star)$ is a Novikov algebra.
\end{pro}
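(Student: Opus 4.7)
The plan is to split the biconditional into two directions. The ``only if'' direction will be reduced to Proposition~\ref{pro:anti-pre-Lie algebras from Novikov algebras} via a rescaling trick, while the ``if'' direction requires a direct expansion of the pre-Lie identity.

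I begin with a preliminary simplification: from $x\star y = x\circ y - 2y\circ x$ one obtains
\begin{equation*}
x\star y - y\star x = 3(x\circ y - y\circ x),
\end{equation*}
so $(A,\star)$ is automatically Lie-admissible, its commutator being $3$ times the Lie bracket of $\circ$. Hence it suffices to verify the pre-Lie identity~\eqref{eq:defi:pre-Lie algebras} for $\star$. For the ``only if'' direction, I set $\tilde\star := -\tfrac{1}{3}\star$, so that $x\tilde\star y = -\tfrac{1}{3}x\circ y + \tfrac{2}{3}y\circ x$; this is exactly the product of Eq.~\eqref{eq:pro:admissible Novikov algebras and Novikov algebras1}. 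A one-line check gives $x\tilde\star y + 2(y\tilde\star x) = x\circ y$, so $(A,\circ)$ is the $2$-algebra of $(A,\tilde\star)$. The pre-Lie identity~\eqref{eq:defi:pre-Lie algebras} and the Novikov identity~\eqref{eq:defi:Novikov algebras1} are both homogeneous quadratic in the product, so $\star$ is pre-Lie (resp.\ Novikov) if and only if $\tilde\star$ is. Assuming $\star$ is pre-Lie, Proposition~\ref{pro:anti-pre-Lie algebras from Novikov algebras} applied to $(A,\tilde\star)$ forces $\tilde\star$ to be Novikov (since by hypothesis $\circ$ is anti-pre-Lie) and further yields that $\circ$ is admissible Novikov; rescaling back, $\star$ is Novikov, which also disposes of the ``moreover'' clause.

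For the ``if'' direction, suppose $\circ$ is admissible Novikov. I verify the pre-Lie identity in the form $x\star(y\star z) - y\star(x\star z) = [x,y]_\star\star z$ by direct expansion. Substituting $x\star y = x\circ y - 2y\circ x$, the left-hand side decomposes into eight nested products of the forms $a\circ(b\circ c)$ and $(a\circ b)\circ c$. I reduce the sum in three stages: (i) Eq.~\eqref{eq:defi:anti-pre-Lie algebras1} converts left-nested differences $a\circ(b\circ c) - b\circ(a\circ c)$ into $[b,a]\circ c$; (ii) the admissible Novikov identity~\eqref{eq:defi:admissible Novikov algebras1} converts antisymmetrized right-nested differences $(a\circ b)\circ c - (a\circ c)\circ b$ into $2a\circ[b,c]$; (iii) the cyclic identities~\eqref{eq:defi:anti-pre-Lie algebras2} and~\eqref{eq:defi:anti-pre-Lie algebras5} absorb the residual cyclic sums. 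The target $[x,y]_\star\star z$ equals $3[x,y]\circ z - 6z\circ[x,y]$ after one more expansion of $\star$, and the left-hand side should collapse to exactly this.

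The main obstacle is the bookkeeping in the ``if'' direction: eight initial terms interact through four nontrivial identities, and these must be applied in an order that ensures each invocation of the admissible Novikov identity receives an antisymmetric input and each residual cyclic triple matches~\eqref{eq:defi:anti-pre-Lie algebras2} or~\eqref{eq:defi:anti-pre-Lie algebras5}. Structurally the reduction succeeds because~\eqref{eq:defi:anti-pre-Lie algebras1} controls left-nested compositions while~\eqref{eq:defi:admissible Novikov algebras1} controls antisymmetrized right-nested ones, which together exhaust the two modes of nesting present in the pre-Lie associator of $\star$.
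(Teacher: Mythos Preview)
Your ``only if'' direction via the rescaling $\tilde\star=-\tfrac{1}{3}\star$ and Proposition~\ref{pro:anti-pre-Lie algebras from Novikov algebras} is correct and genuinely different from the paper. The paper instead carries out a single direct expansion of the pre-Lie associator
\[
(x\star y)\star z - x\star(y\star z) - (y\star x)\star z + y\star(x\star z)
\]
and, after applying Eqs.~\eqref{eq:defi:anti-pre-Lie algebras1}, \eqref{eq:defi:anti-pre-Lie algebras2}, \eqref{eq:defi:anti-pre-Lie algebras5}, reduces it to exactly $-4z\circ[x,y]+2(z\circ x)\circ y-2(z\circ y)\circ x$, which is $-2$ times the admissible Novikov identity~\eqref{eq:defi:admissible Novikov algebras1} evaluated at $(z,x,y)$. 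This single computation thus yields both directions of the biconditional at once; the paper then does a second direct computation for the Novikov identity. Your rescaling trick buys you the ``only if'' direction and the ``moreover'' clause essentially for free, which is elegant; the trade-off is that you must still do the ``if'' direction by hand, so overall you do not save work compared to the paper's unified computation.

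Your sketch of the ``if'' direction is correct in spirit and matches the paper's strategy, but as written it is only a plan, not a proof: you describe the three-stage reduction and assert that the eight terms ``should collapse'' to $3[x,y]\circ z - 6z\circ[x,y]$ without actually tracking them. Since the paper's computation shows the associator reduces cleanly to a multiple of~\eqref{eq:defi:admissible Novikov algebras1}, you would be better served either carrying out the expansion explicitly (it is short once organized) or noting that the same rescaling reasoning does \emph{not} close this direction because Proposition~\ref{pro:anti-pre-Lie algebras from Novikov algebras} presupposes $\tilde\star$ is pre-Lie, which is exactly what you are trying to prove --- a point the paper's Remark following the proposition makes explicit.
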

\begin{proof}
Let $x,y,z\in A$. Then we have
\begin{eqnarray*}
&&(x\star y)\star z-x\star(y\star z)-(y\star x)\star z+y\star(x\star z)\\
&&\overset{(\ref{eq:pro:admissible Novikov algebras and Novikov
algebras2})}{=}(x\circ y)\circ z-2z\circ(x\circ y)-2(y\circ
x)\circ z+4z\circ(y\circ x)-x\circ(y\circ z)+2(y\circ z)\circ x\\
&&\hspace{0.4cm}+2x\circ(z\circ y)-4(z\circ y)\circ x-(y\circ x)\circ z+2z\circ(y\circ x)+2(x\circ y)\circ z-4z\circ(x\circ y)\\
&&\hspace{0.4cm}+y\circ(x\circ z)-2(x\circ z)\circ y-2y\circ(z\circ x)+4(z\circ x)\circ y\\
&&\overset{(\ref{eq:defi:anti-pre-Lie algebras1})}{=}4[x,y]\circ z-6z\circ[x,y]+2[y,z]\circ x+2[z,x]\circ y-2(z\circ y)\circ x+2(z\circ x)\circ y\\
&&\hspace{0.4cm}+2x\circ(z\circ y)-2y\circ(z\circ x)\\
&&\overset{(\ref{eq:defi:anti-pre-Lie algebras5})}{=}4[x,y]\circ z-6z\circ[x,y]+2[y,z]\circ x+2[z,x]\circ y-2(z\circ y)\circ x+2(z\circ x)\circ y\\
&&\hspace{0.4cm}+2[y,x]\circ z-2z\circ[y,x]\\
&&\overset{(\ref{eq:defi:anti-pre-Lie
algebras2})}{=}-4z\circ[x,y]+2(z\circ x)\circ y-2(z\circ y)\circ
x.
\end{eqnarray*}
Thus $(A,\star)$ is a pre-Lie algebra if and only if $(A,\circ)$
is an admissible Novikov algebra. Moreover, if $(A,\circ)$ is an
admissible Novikov algebra, then we have
\begin{eqnarray*}
&&(x\star y)\star z-(x\star z)\star y\\
&&\overset{(\ref{eq:pro:admissible Novikov algebras and Novikov
algebras2})}{=}(x\circ y)\circ z-2z\circ(x\circ y)-2(y\circ
x)\circ z+4z\circ(y\circ x)-(x\circ z)\circ y+2y\circ(x\circ z)\\
&&\hspace{0.4cm}+2(z\circ x)\circ y-4y\circ(z\circ x)\\
&&\overset{(\ref{eq:defi:anti-pre-Lie algebras3})}{=}[x,y]\circ z-(y\circ x)\circ z-2z\circ[x,y]+2z\circ(y\circ x)-[x,z]\circ y+(z\circ x)\circ y\\
&&\hspace{0.4cm}+2y\circ[x,z]-2y\circ(z\circ x)\\
&&\overset{(\ref{eq:defi:anti-pre-Lie algebras1})}{=}[x,y]\circ z+[z,x]\circ y+2[y,z]\circ x+2z\circ[y,x]+2y\circ[x,z]-(y\circ x)\circ z+(z\circ x)\circ y\\
&&\overset{(\ref{eq:defi:anti-pre-Lie algebras2}),(\ref{eq:defi:anti-pre-Lie algebras5})}{=}[x,z]\circ y+[y,x]\circ z+2x\circ[y,z]-(y\circ x)\circ z+(z\circ x)\circ y\\
&&\overset{(\ref{eq:defi:anti-pre-Lie algebras3})}{=}2x\circ
[y,z]+(x\circ z)\circ y-(x\circ y)\circ
z\overset{(\ref{eq:defi:admissible Novikov algebras1})}{=}0.
\end{eqnarray*}
Thus $(A,\star)$ is a Novikov algebra.
\end{proof}

\begin{rmk}
If we only consider the categories of Novikov algebras and
admissible Novikov algebras, then the fact that $(A,\star)$ is a
Novikov algebra if and only if its 2-algebra $(A,\circ)$ is an
admissible Novikov
 algebra implies the ``converse" side that $(A,\circ)$ is an admissible
Novikov algebra if and only if its (-2)-algebra $(A,\star)$ is a
Novikov algebra. However, since the above correspondence is put
into a bigger framework involving pre-Lie algebras and
anti-pre-Lie algebras, we would like to point out that
Proposition~\ref{pro:admissible Novikov algebras and Novikov
algebras} cannot be obtained from
Proposition~\ref{pro:anti-pre-Lie algebras from Novikov algebras}
directly.
\end{rmk}




Propositions \ref{pro:anti-pre-Lie algebras from Novikov algebras}
and \ref{pro:admissible Novikov algebras and Novikov algebras}
give the following relationship, as already illustrated in
Introduction.


$$\mbox{\{pre-Lie\} $\hookleftarrow$ \{Novikov\}
$\overset{2-algebra}{\underset{(-2)-algebra}{\rightleftharpoons}}$
\{admissible Novikov\} $\hookrightarrow$ \{anti-pre-Lie\}.}$$

\begin{ex}
Any commutative associative algebra is both a Novikov algebra and
an admissible Novikov algebra.
\end{ex}

\begin{ex}\label{ex:Novikov algebras from linear functions}
Let $A$ be a vector space of dimension $n\geq 2$. Let $f,g:
A\rightarrow \mathbb{C}$ be two linear functions. Then it is
straightforward to show Eq.~(\ref{eq:anti-pre-Lie algebras from
linear functions}) defines a Novikov algebra $(A,\star)$ if and
only if $g=0$, that is, $x\star y=f(y)x$ for all $x,y\in A$. On
the other hand, Eq.~(\ref{eq:anti-pre-Lie algebras from linear
functions}) defines an admissible Novikov algebra $(A,\circ)$ if
and only if $g=2f$, that is, $x\circ y=f(y)x+2f(x)y$ for all
$x,y\in A$, whose classification is given as Cases (\ref{it:f2})
and (\ref{it:f3}) in Corollary~\ref{cor:class}. The latter can be
obtained directly or from the correspondence with Novikov
algebras. Explicitly, let $x,y\in A$. If $(A,\circ)$ is an
admissible Novikov algebra, then the $(-2)$-algebra $(A,\star)$ of
$(A,\circ)$ is a Novikov algebra, where
$$x\star y=x\circ y-2y\circ x=(f-2g)(y)x+(g-2f)(x)y.$$
Then we have $g-2f=0$. Conversely, if $g=2f$, then $(A,\circ)$ is
the $2$-algebra of the Novikov algebra $(A,\star)$ given by
$x\star y=f(y)x$. Thus $(A,\circ)$ is an admissible Novikov
algebra.
\end{ex}

\begin{ex}
With the notations given in Proposition~\ref{thm:classification},
any complex 2-dimensional non-commutative admissible Novikov
algebra is isomorphic to one of the following cases:
\begin{enumerate}
\item[(AN1):] $(A2)_{-2}$ and $(A4)_\lambda$ with $\lambda\ne -1$,
that is,
$$e_{1}\circ e_{1}=0,e_{1}\circ e_{2}=(\lambda+1)e_{1},
e_{2}\circ e_{1}=\lambda e_{1}, e_{2}\circ e_{2}=(\lambda-1)e_{2},
\lambda\in\mathbb{C};$$ \item[(AN2):] $(A5)$, that is,
$$e_{1}\circ e_{1}=0, e_{1}\circ e_{2}=-e_{1}, e_{2}\circ
e_{1}=-2e_{1}, e_{2}\circ e_{2}=e_{1}-3e_{2}.$$
\end{enumerate}
The $(-2)$-algebras of the above admissible Novikov algebras give
the classification of 2-dimensional non-commutative complex
Novikov algebras:
\begin{enumerate}
\item[(N1)] $e_{1}\star e_{1}=0, e_{1}\star
e_{2}=(1-\lambda)e_{1}, e_{2}\star e_{1}=-(\lambda+2)e_{1},
e_{2}\star e_{2}=(1-\lambda)e_{2},\lambda\in\mathbb{C};$

\item[(N2)] $e_{1}\star  e_{1}=0, e_{1}\star e_{2}=3e_{1},
e_{2}\star e_{1}=0, e_{2}\star e_{2}=-e_{1}+3e_{2}$.
\end{enumerate}
On the other hand, the classification of complex 2-dimensional
non-commutative Novikov algebras is presented in another way in
\cite{Bai2001}.
\begin{enumerate}
\item[(N'1)] $e_{1}\star e_{1}=0, e_{1}\star e_{2}=0, e_{2}\star
e_{1}=-e_{1}, e_{2}\star e_{2}=0;$

\item[(N'2)] $e_{1}\star e_{1}=0, e_{1}\star e_{2}=e_{1},
e_{2}\star e_{1}=0, e_{2}\star e_{2}=e_{2};$

\item[(N'3)] $e_{1}\star e_{1}=0, e_{1}\star e_{2}=e_{1},
e_{2}\star e_{1}=le_{1}, e_{2}\star e_{2}=e_{2},
l\in\mathbb{C}\setminus\lbrace 0,1\rbrace;$

\item[(N'4)] $e_{1}\star e_{1}=0, e_{1}\star e_{2}=e_{1},
e_{2}\star e_{1}=0, e_{2}\star e_{2}=e_{1}+e_{2}.$
\end{enumerate}
The correspondence between the two presentations of Novikov
algebras is given as follows.

$(N1)$ with $\lambda=1$ $\longleftrightarrow$ $(N'1)$;\quad\quad
$(N1)$ with $\lambda=-2$ $\longleftrightarrow$ $(N'2)$;

$(N1)$ with $\lambda\ne 1,-2$ $\longleftrightarrow$ $(N'3)$ with
$l=1-\dfrac{3}{1-\lambda}$;\quad \quad $(N2)$
$\longleftrightarrow$ $(N'4)$.
\end{ex}


The correspondence between Novikov algebras and admissible Novikov
algebras induces the following correspondence on symmetric
bilinear forms.

\begin{pro}\label{pro:bilinear forms on Novikov algebras and admissible Novikov algebras}
Let $\mathcal B$ be a symmetric bilinear form on a Novikov algebra
$(A,\star)$. Assume that $(A,\circ)$ is the corresponding
admissible Novikov algebra, that is, the 2-algebra of $(A,\star)$.
Then $\mathcal B$ is invariant on $(A,\circ)$ if and only if the
following condition is satisfied:
\begin{equation}\label{eq:pro:bilinear forms on Novikov algebras and admissible Novikov algebras1}
\mathcal{B}(x\star y,z)=-\mathcal{B}(y, x\star z+z\star
x),\;\;\forall x,y,z\in A.
\end{equation}
\end{pro}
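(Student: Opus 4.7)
The plan is to translate the invariance condition for $\mathcal B$ on $(A,\circ)$ into a relation purely in terms of $\star$. Recall from the proof of Proposition~\ref{pro:anti-pre-Lie algebras from Novikov algebras} that the commutator on the 2-algebra is $[x,z]=z\star x-x\star z$. Expanding $\mathcal B(x\circ y,z)=\mathcal B(y,[x,z])$ with $x\circ y=x\star y+2y\star x$ produces, after rearrangement, the single identity
\begin{equation}\label{eq:proposal-star}
\mathcal B(x\star y,z)+2\mathcal B(y\star x,z)+\mathcal B(y,x\star z)-\mathcal B(y,z\star x)=0,\quad\forall x,y,z\in A. \tag{$\ast$}
\end{equation}
The statement therefore reduces to proving the equivalence of $(\ast)$ with Eq.~(\ref{eq:pro:bilinear forms on Novikov algebras and admissible Novikov algebras1}), which I will abbreviate as $(\ast\ast)$.

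First I would establish $(\ast)\Rightarrow(\ast\ast)$. Swapping $y$ and $z$ in $(\ast)$ and then using the symmetry of $\mathcal B$ to rewrite each term as an evaluation on $(\,\cdot\,,z)$ produces a companion equation; subtracting it from $(\ast)$ makes the $\mathcal B(x\star y,z)$ and $\mathcal B(y,x\star z)$ terms cancel and yields the right-invariance-type identity
\[
\mathcal B(y\star x,z)=\mathcal B(y,z\star x),\quad\forall x,y,z\in A.
\]
Substituting this back into $(\ast)$ immediately collapses it to $(\ast\ast)$.

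For the converse, I would symmetrize $(\ast\ast)$ itself by swapping $x$ and $z$ and applying the symmetry of $\mathcal B$; comparing the result with $(\ast\ast)$ again forces the auxiliary identity $\mathcal B(y\star x,z)=\mathcal B(y,z\star x)$. Using this to replace $\mathcal B(y,z\star x)$ by $\mathcal B(y\star x,z)$ on the right of $(\ast\ast)$ recovers $(\ast)$ after one line of arithmetic.

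The proof is really just a bookkeeping exercise: no facts about the Novikov identity itself or about the pre-Lie axiom enter beyond the formula $[x,z]=z\star x-x\star z$ for the commutator of the 2-algebra. The only place care is needed is in tracking how the symmetry of $\mathcal B$ interacts with the variable swap, since it is this combination that produces the auxiliary right-invariance identity sitting between $(\ast)$ and $(\ast\ast)$; once that identity is isolated, both implications are immediate.
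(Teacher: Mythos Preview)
Your argument is correct and runs along the same line as the paper's: both proofs isolate the auxiliary ``right-invariance'' identity $\mathcal B(y\star x,z)=\mathcal B(y,z\star x)$ (the paper's Eq.~(\ref{eq:bilinear})) as the bridge between invariance on $(A,\circ)$ and condition~(\ref{eq:pro:bilinear forms on Novikov algebras and admissible Novikov algebras1}), and the backward direction is handled identically in both by noting that the right-hand side of~(\ref{eq:pro:bilinear forms on Novikov algebras and admissible Novikov algebras1}) is symmetric in $x$ and $z$.

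The one substantive difference is in the forward direction. The paper first passes through Corollary~\ref{cor:corres} (symmetric invariance on $(A,\circ)$ forces $\mathcal B$ to be a commutative 2-cocycle on the sub-adjacent Lie algebra) and then combines the cocycle identity with invariance to obtain the symmetry $\mathcal B(x\star y,z)=\mathcal B(z\star y,x)$. Your derivation is more self-contained: you get the auxiliary identity directly by subtracting the $y\leftrightarrow z$ instance of $(\ast)$ from $(\ast)$, so no appeal to the 2-cocycle result is needed. Your closing remark that neither the Novikov nor the pre-Lie identity is actually used is accurate and worth noting---the whole proposition is a statement about symmetric bilinear forms and the 2-algebra construction alone.
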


\begin{proof}
Let $x,y,z\in A$. Suppose that $\mathcal B$ is invariant on
$(A,\circ)$. Then we have
\begin{eqnarray*}
3\mathcal{B}(x\star y,z) &\overset{(\ref{eq:pro:admissible Novikov
algebras and Novikov algebras1})}{=}&-\mathcal{B}(x\circ y,
z)+2\mathcal{B}(y\circ x,z)
\overset{(\ref{eq:thm:commutative 2-cocycles and anti-pre-Lie algebras})}{=}-\mathcal{B}(y,[x,z])+2\mathcal{B}(x,[y,z])\\
&\overset{(\ref{eq:2-cocycle})}{=}&\mathcal{B}(x,[y,z])+\mathcal{B}(z,[y,x]).
\end{eqnarray*}
Due to the symmetry of $x$ and $z$ in the above equation, we have
\begin{equation}\label{eq:bilinear}
\mathcal B(x\star y,z)=\mathcal B(z\star y,x),\;\;\forall x,y,z\in
A.
\end{equation}
Moreover, we have
\begin{eqnarray*}
3\mathcal{B}(x\star y,z)
&=&\mathcal{B}(x,z\star y)-\mathcal{B}(x,y\star z)+\mathcal{B}(z,x\star y)-\mathcal{B}(z,y\star x)\\
&\overset{(\ref{eq:bilinear})}{=}&2\mathcal{B}(z,x\star
y)-\mathcal{B}(y,x\star z)-\mathcal{B}(y,z\star x).
\end{eqnarray*}
Hence Eq.~(\ref{eq:pro:bilinear forms on Novikov algebras and
admissible Novikov algebras1}) holds.

Conversely, if $\mathcal B$ satisfies Eq.~(\ref{eq:pro:bilinear
forms on Novikov algebras and admissible Novikov algebras1}) on
the Novikov algebra $(A,\star)$, then Eq.~(\ref{eq:bilinear})
holds and
\begin{eqnarray*}
\mathcal{B}(x\circ y,z)&\overset{(\ref{eq:pro:anti-pre-Lie
algebras from Novikov algebras})}=&\mathcal{B}(x\star
y,z)+2\mathcal{B}(y\star x,z)=\mathcal B(y\star
x,z)+\mathcal{B}(z,x\star y+y\star x)\\
&\overset{(\ref{eq:pro:bilinear forms on Novikov algebras and
admissible Novikov
algebras1}),(\ref{eq:bilinear})}=&\mathcal{B}(z\star
x,y)-\mathcal{B}(x\star z,y) =\mathcal{B}([x,z],y).
\end{eqnarray*}
Thus $\mathcal B$ is invariant on $(A,\circ)$.
\end{proof}

The correspondence between Novikov algebras and
infinite-dimensional Lie algebras (\cite{Bal}) gives a
correspondence between admissible Novikov algebras and
infinite-dimensional Lie algebras.

\begin{pro} \label{lem:BN} {\rm (\cite{Bal})} Let $A$ be a vector space with a
bilinear operation $\star:A\otimes A\rightarrow A$. Set $\hat
A=A\otimes {\mathbb F}[t,t^{-1}]$. Define a bilinear operation
$[-,-]:\hat A\otimes \hat A\rightarrow \hat A$ by
\begin{equation}
[x\otimes t^{m+1}, y\otimes t^{n+1}]=((m+1)x\star y-(n+1)y\star
x)\otimes t^{m+n+1},\;\;\forall x,y\in A, m,n\in {\mathbb Z}.
\end{equation}
Then $(\hat A,[-,-])$ is a Lie algebra if and only if $(A,\star)$
is a Novikov algebra.
\end{pro}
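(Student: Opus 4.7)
The plan is to check skew-symmetry directly and then analyze the cyclic sum in the Jacobi identity as a polynomial in the integer parameters $m,n,p$.

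First, skew-symmetry of $[-,-]$ on $\hat A$ follows immediately from the defining formula: swapping the two arguments exchanges the roles of $m+1$ and $n+1$ and the roles of $x$ and $y$, which just negates the bracket. This step needs no hypothesis on $\star$.

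Next, I would expand the cyclic sum
\begin{equation*}
J(x,y,z;m,n,p):=[[x\otimes t^{m+1},y\otimes t^{n+1}],z\otimes t^{p+1}]+\text{cyclic in }(x,y,z;m,n,p),
\end{equation*}
using the defining formula twice. Each triple bracket lives in $A\otimes t^{m+n+p+1}$, so $J$ may be identified with an element $P(x,y,z;m,n,p)\in A$. A direct expansion shows that $P$ is a polynomial of total degree at most $2$ in $m,n,p$ whose coefficients are particular $\mathbb{F}$-linear combinations of the six quadruple products $(x\star y)\star z,\,(y\star x)\star z,\,(x\star z)\star y,\,(z\star x)\star y,\,z\star(x\star y),\,z\star(y\star x)$ together with their cyclic relatives. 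Since the defining formula must hold for all integers $m,n,p$ and $\mathbb{Z}$ is Zariski-dense in any field of characteristic zero, $J=0$ identically is equivalent to $P=0$ as a polynomial in $m,n,p$, which in turn is equivalent to the vanishing of every monomial coefficient of $P$.

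Then I would read off the coefficients that pin down the Novikov axioms. The coefficient of $m^{2}$ in $P$ comes only from the term $(m+n+1)(m+1)(x\star y)\star z$ of $S(x,y,z;m,n,p)$ and from the term $-(pm+m^{2}+p+2m+1)(x\star z)\star y$ of $S(z,x,y;p,m,n)$, so it equals $(x\star y)\star z-(x\star z)\star y$; forcing it to vanish yields the right-symmetry identity \eqref{eq:defi:Novikov algebras1}. The coefficient of $mn$ in $P$ collects contributions from four of the twelve terms and evaluates to $((x\star y)\star z-x\star(y\star z))-((y\star x)\star z-y\star(x\star z))$; forcing it to vanish gives exactly the pre-Lie identity \eqref{eq:defi:pre-Lie algebras}. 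This proves the ``only if'' direction.

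For the ``if'' direction, I would show that, conversely, the two Novikov axioms make every monomial coefficient of $P$ vanish. The coefficients of $m^{2}$, $n^{2}$, $p^{2}$ reduce to instances of \eqref{eq:defi:Novikov algebras1} by a cyclic relabeling; the coefficients of $mn$, $np$, $pm$ reduce to instances of \eqref{eq:defi:pre-Lie algebras}; and the linear and constant coefficients collapse after using \eqref{eq:defi:pre-Lie algebras} together with the consequence of \eqref{eq:defi:Novikov algebras1} that the associator of $\star$ is symmetric in its last two arguments. The main obstacle is purely bookkeeping: there are twelve terms to track through three cyclic rotations, and one must verify that the lower-order coefficients genuinely collapse under the two Novikov identities rather than requiring a fresh relation. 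In practice, once the $m^{2}$ and $mn$ coefficients are handled, the remaining coefficients involve the same four expressions $(u\star v)\star w-(u\star w)\star v$ and $(u\star v)\star w-u\star(v\star w)-(v\star u)\star w+v\star(u\star w)$ (symmetrized in various ways), so the verification is mechanical.
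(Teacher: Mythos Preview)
The paper does not prove this proposition at all: it is stated with the attribution ``(\cite{Bal})'' and used as a black box, so there is no proof in the paper to compare against. Your direct computation---checking skew-symmetry, expanding the Jacobiator as a polynomial in $m,n,p$, and reading off the Novikov axioms from the $m^{2}$ and $mn$ coefficients---is the standard argument and is correct in outline.

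One small inaccuracy in your sketch: the associator of a Novikov algebra is \emph{not} symmetric in its last two arguments. Right-commutativity \eqref{eq:defi:Novikov algebras1} gives $(x\star y)\star z=(x\star z)\star y$, but $(x,y,z)-(x,z,y)=-x\star[y,z]$, which is generally nonzero. When you reduce the linear-in-$m$ coefficient, what actually happens is that the $2(x\star y)\star z-2(x\star z)\star y$ part dies by right-commutativity, and the remaining six terms rearrange (via the pre-Lie identity) to $(x,z,y)-(x,y,z)-x\star[y,z]=(x\star z)\star y-(x\star y)\star z$, which again vanishes by right-commutativity. So the collapse of the lower-order coefficients uses the two Novikov axioms directly, not the (false) symmetry of the associator in the last two variables. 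This is only a mislabelling of the identity used; the mechanical verification you describe goes through once you correct it.
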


Combining Propositions~\ref{pro:admissible Novikov algebras and
Novikov algebras} and \ref{lem:BN} together, we have the following
conclusion.

\begin{cor}\label{cor:BN-A}
Let $A$ be a vector space with a bilinear operation
$\circ:A\otimes A\rightarrow A$. Set $\hat A=A\otimes {\mathbb
F}[t,t^{-1}]$. Define a bilinear operation $[-,-]:\hat A\otimes \hat A\rightarrow \hat A$ by
\begin{equation}
[x\otimes t^{m+1}, y\otimes t^{n+1}]=((m+2n+3)x\circ
y-(2m+n+3)y\circ x)\otimes t^{m+n+1},\;\;\forall x,y\in A, m,n\in
{\mathbb Z}.
\end{equation}
Then $(\hat A,[-,-])$ is a Lie algebra if and only if $(A,\star)$
is an admissible Novikov algebra.
\end{cor}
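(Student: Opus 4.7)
The plan is to reduce the statement to Proposition~\ref{lem:BN} via the $2$-algebra/$(-2)$-algebra correspondence of Proposition~\ref{pro:admissible Novikov algebras and Novikov algebras}. Given the bilinear operation $\circ$ on $A$, I would first define $\star:A\otimes A\to A$ by
\begin{equation*}
x\star y=x\circ y-2y\circ x,\;\;\forall x,y\in A,
\end{equation*}
so that $(A,\circ)$ is the $2$-algebra of $(A,\star)$ (as in Eq.~(\ref{eq:pro:anti-pre-Lie algebras from Novikov algebras})). By Proposition~\ref{pro:admissible Novikov algebras and Novikov algebras}, $(A,\circ)$ is an admissible Novikov algebra if and only if $(A,\star)$ is a Novikov algebra.

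Next I would apply Proposition~\ref{lem:BN} to $(A,\star)$: the bracket
\begin{equation*}
[x\otimes t^{m+1},y\otimes t^{n+1}]_{\star}=((m+1)x\star y-(n+1)y\star x)\otimes t^{m+n+1}
\end{equation*}
gives a Lie algebra structure on $\hat A=A\otimes \mathbb{F}[t,t^{-1}]$ if and only if $(A,\star)$ is a Novikov algebra. The key step is to show that this bracket $[-,-]_{\star}$ coincides with the bracket $[-,-]$ defined in the corollary in terms of $\circ$. This is a direct substitution: expanding $(m+1)x\star y-(n+1)y\star x$ using $x\star y=x\circ y-2y\circ x$ and collecting the coefficients of $x\circ y$ and $y\circ x$ yields $(m+2n+3)x\circ y-(2m+n+3)y\circ x$, which matches the defining formula of $[-,-]$. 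Stringing together these equivalences, $(\hat A,[-,-])$ is a Lie algebra if and only if $(A,\circ)$ is an admissible Novikov algebra.

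The only real work is the routine coefficient computation identifying the two brackets; there is no genuine obstacle, since once the correspondence $x\star y=x\circ y-2y\circ x$ is in place, both directions follow immediately from the two cited propositions. I would present the proof in two short sentences: one invoking Proposition~\ref{pro:admissible Novikov algebras and Novikov algebras} to exchange $\circ$ for $\star$, and one invoking Proposition~\ref{lem:BN} after noting that the brackets agree.
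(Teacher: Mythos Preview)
Your approach is exactly the paper's: combine Proposition~\ref{pro:admissible Novikov algebras and Novikov algebras} with Proposition~\ref{lem:BN} after verifying that the bracket built from $\star$ coincides with the one built from $\circ$, which your coefficient computation does correctly. One small inaccuracy: defining $x\star y=x\circ y-2y\circ x$ makes $(A,\star)$ the $(-2)$-algebra of $(A,\circ)$, but it does \emph{not} make $(A,\circ)$ the $2$-algebra of $(A,\star)$ (that would give $-3\circ$); this slip is harmless here since the equivalence ``$(A,\circ)$ admissible Novikov $\Leftrightarrow$ $(A,\star)$ Novikov'' and the bracket identification both hold as stated.
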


Next we interpret admissible Novikov algebras in terms of
anti-$\mathcal O$-operators.

\begin{defi}\label{defi:admissible anti O-operators}
Let $T:V\rightarrow \frak g$ be an anti-$\mathcal{O}$-operator of
a Lie algebra $(\frak g,[-,-])$ associated to a representation
$(\rho,V)$. Then $T$ is called \textbf{admissible} if {\small
\begin{equation}\label{eq:defi:admissible anti O-operators}
2\rho(T(u))\rho(T(v))w-2\rho(T(u))\rho(T(w))v
=\rho(T(\rho(T(u))v))w-\rho(T(\rho(T(u))w))v,\;\;\forall u,v,w\in
V.
\end{equation}}
In particular, an anti-Rota-Baxter operator $R$ of $(\frak
g,[-,-])$ is called \textbf{admissible} if
\begin{equation}\label{eq:defi:admissible anti-Rota-Baxter perators}
2[R(x),[R(y),z]]-2[R(x),[R(z),y]]=[R([R(x),y]),z]-[R([R(x),z]),y], \forall x,y,z\in A.
\end{equation}
\end{defi}

\begin{pro}\label{pro:admissible operator}
Let $T:V\rightarrow \frak g$ be an admissible
anti-$\mathcal{O}$-operator of a Lie algebra $(\frak g,[-,-])$
associated to a representation $(\rho,V)$.  Then $T$ is strong.
Moreover, $(V,\circ)$ is an admissible Novikov algebra, where the
operation $\circ$ is given by Eq.~(\ref{eq:pro:O-operator}).
\end{pro}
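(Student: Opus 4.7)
The plan is to show directly that $(V,\circ)$ satisfies the admissible Novikov identity, and then derive the strong property of $T$ as a consequence. The approach will chain together results already proved earlier in the section rather than verify the strong condition by an independent calculation.

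First, I would observe that Eq.~(\ref{eq:defi:anti-pre-Lie algebras1}) already holds for $(V,\circ)$ as soon as $T$ is an anti-$\mathcal O$-operator; this is given by Proposition~\ref{pro:O-operator}. The core of the proof is then the verification of Eq.~(\ref{eq:defi:admissible Novikov algebras1}). Using $u\circ v=-\rho(T(u))v$ and $[v,w]=v\circ w-w\circ v=-\rho(T(v))w+\rho(T(w))v$, a short unwinding gives
\begin{equation*}
2u\circ[v,w]=2\rho(T(u))\rho(T(v))w-2\rho(T(u))\rho(T(w))v,
\end{equation*}
\begin{equation*}
(u\circ v)\circ w-(u\circ w)\circ v=\rho(T(\rho(T(u))v))w-\rho(T(\rho(T(u))w))v.
\end{equation*}
The identity Eq.~(\ref{eq:defi:admissible Novikov algebras1}) for $(V,\circ)$ is therefore literally the admissibility condition Eq.~(\ref{eq:defi:admissible anti O-operators}) for $T$, so no further computation is needed; the two equations coincide term for term.

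Combining the two displays, $(V,\circ)$ is an admissible Novikov algebra. By Proposition~\ref{pro:admissible is anti}, every admissible Novikov algebra is an anti-pre-Lie algebra, so in particular $(V,\circ)$ is Lie-admissible. Applying Proposition~\ref{pro:O-operator} once more, the Lie-admissibility of $(V,\circ)$ is equivalent to $T$ being strong, which gives the first assertion. This also completes the admissible Novikov claim.

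I do not foresee a real obstacle: the proof is just an alignment of definitions, with Eq.~(\ref{eq:defi:admissible anti O-operators}) engineered to match the admissible Novikov axiom under the formula $u\circ v=-\rho(T(u))v$. The only point that requires slight care is the logical order—deducing the strong property of $T$ from admissible Novikov via Propositions~\ref{pro:admissible is anti} and~\ref{pro:O-operator}, rather than trying to derive Eq.~(\ref{eq:defi:anti O-operators2}) directly from Eq.~(\ref{eq:defi:admissible anti O-operators}).
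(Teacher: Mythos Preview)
Your argument is correct and takes a genuinely different route from the paper's. The paper first establishes that $T$ is strong by a direct computation: it expands $2\rho([T(u),T(v)])w+2\rho([T(v),T(w)])u+2\rho([T(w),T(u)])v$ using the representation identity and the admissibility condition, rewrites the result via the anti-$\mathcal O$-operator equation, and finds that this cyclic sum equals its own negative, forcing it to vanish. Only then does it check Eq.~(\ref{eq:defi:admissible Novikov algebras1}) and invoke Proposition~\ref{pro:O-operator}. You instead verify Eq.~(\ref{eq:defi:admissible Novikov algebras1}) first (the same translation of the admissibility condition that the paper also performs), conclude that $(V,\circ)$ is admissible Novikov directly from the definition, and then let the chain Proposition~\ref{pro:admissible is anti} $\Rightarrow$ Lie-admissible $\Rightarrow$ Proposition~\ref{pro:O-operator} deliver strongness of $T$ for free. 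Your route is shorter and reuses the machinery already in place; the paper's route is more self-contained and makes the ``$3X=0$'' step visible at the level of operators rather than hiding it inside Proposition~\ref{pro:admissible is anti}. Both are valid, and the logical-order remark in your last paragraph is exactly the point of difference.
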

\begin{proof}
Let $u,v,w\in V$. By Eqs.~(\ref{eq:defi:anti O-operators1}) and
(\ref{eq:defi:admissible anti O-operators}), we have
\begin{eqnarray*}
&&2\rho([T(u),T(v)])w+2\rho([T(v),T(w)])u+2\rho([T(w),T(u)])v\\
&&=\rho(T(\rho(T(u))v))w-\rho(T(\rho(T(u))w))v+\rho(T(\rho(T(v))w))u-\rho(T(\rho(T(v))u))w\\
&&\hspace{0.4cm}+\rho(T(\rho(T(w))u))v-\rho(T(\rho(T(w))v))u\\
&&=\rho([T(v),T(u)])w+\rho([T(w),T(v)])u+\rho([T(u),T(w)])v.
\end{eqnarray*}
Hence Eq.~(\ref{eq:defi:anti O-operators2}) holds and thus $T$ is
strong. Moreover, we have
\begin{eqnarray*}
2u\circ[v,w]&=&2\rho(T(u))\rho(T(v))w-2\rho(T(u))\rho(T(w))v=\rho(T(\rho(T(u))v))w-\rho(T(\rho(T(u))w))v\\
&=&(u\circ v)\circ w-(u\circ w)\circ v.
\end{eqnarray*}
Then by Proposition \ref{pro:O-operator}, $(V,\circ)$ is an
admissible Novikov algebra.
\end{proof}

\begin{cor}
Let $(\mathfrak{g},[-,-])$ be a Lie algebra, and
$R:\mathfrak{g}\rightarrow \mathfrak{g}$ be an admissible
anti-Rota-Baxter operator. Then $(A,\circ)$ given by
Eq.~(\ref{eq:cons}) defines an admissible Novikov algebra.
Conversely, if $R:\mathfrak{g}\rightarrow \mathfrak{g}$ is a
linear transformation on a Lie algebra $(\mathfrak{g},[-,-])$ such
that Eq.~(\ref{eq:cons}) defines an admissible Novikov algebra,
then $R$ satisfies Eqs.~(\ref{eq:cons2}) and
(\ref{eq:defi:admissible anti-Rota-Baxter perators}).
\end{cor}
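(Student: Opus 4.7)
The plan is to derive both directions from Proposition~\ref{pro:admissible operator} specialized to the adjoint representation, together with Corollary~\ref{cor:anti operator}. The key preliminary observation is that setting $\rho=\mathrm{ad}$ in Eq.~(\ref{eq:defi:admissible anti O-operators}) reproduces Eq.~(\ref{eq:defi:admissible anti-Rota-Baxter perators}) verbatim; hence an admissible anti-Rota-Baxter operator is nothing but an admissible anti-$\mathcal{O}$-operator associated to $(\mathrm{ad},\mathfrak{g})$. Likewise, with $\rho=\mathrm{ad}$, Eq.~(\ref{eq:pro:O-operator}) becomes Eq.~(\ref{eq:cons}).

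Given this dictionary, the forward direction is immediate: Proposition~\ref{pro:admissible operator} applied to $T=R$ and $\rho=\mathrm{ad}$ shows that $(\mathfrak{g},\circ)$ with $x\circ y=-[R(x),y]$ is an admissible Novikov algebra.

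For the converse, suppose $(\mathfrak{g},\circ)$ defined by Eq.~(\ref{eq:cons}) is an admissible Novikov algebra. By Proposition~\ref{pro:admissible is anti} it is in particular an anti-pre-Lie algebra, so the second half of Corollary~\ref{cor:anti operator} yields Eq.~(\ref{eq:cons2}) (and incidentally also Eq.~(\ref{eq:RB-strong})). It remains to extract Eq.~(\ref{eq:defi:admissible anti-Rota-Baxter perators}) from the extra admissible Novikov identity Eq.~(\ref{eq:defi:admissible Novikov algebras1}). I would substitute $x\circ y=-[R(x),y]$ directly: writing $[y,z]_{\circ}=y\circ z-z\circ y=-[R(y),z]+[R(z),y]$ for the sub-adjacent bracket, one checks
$$2x\circ[y,z]_{\circ}=2[R(x),[R(y),z]]-2[R(x),[R(z),y]],$$
$$(x\circ y)\circ z-(x\circ z)\circ y=[R([R(x),y]),z]-[R([R(x),z]),y],$$
and equating these two expressions is exactly Eq.~(\ref{eq:defi:admissible anti-Rota-Baxter perators}).

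There is no serious obstacle here; the corollary is essentially a dictionary translation from the operator side to the algebra side, with Corollary~\ref{cor:anti operator} supplying the anti-pre-Lie piece and the admissibility identity built in by design of Definition~\ref{defi:admissible anti O-operators}. The only thing to be careful about in writing out the converse is to confirm that the routine substitution above is correct and that no additional identity beyond Eqs.~(\ref{eq:cons2}) and (\ref{eq:defi:admissible anti-Rota-Baxter perators}) is forced by the admissible Novikov condition in this form.
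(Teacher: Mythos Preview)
Your proposal is correct and follows essentially the same approach as the paper: the forward direction is Proposition~\ref{pro:admissible operator} with $\rho=\mathrm{ad}$, and the converse is obtained by substituting $x\circ y=-[R(x),y]$ into the defining identities of an admissible Novikov algebra. The paper phrases the converse more tersely (``follows from Definition~\ref{defi:admissible Novikov algebras} directly''), while you route Eq.~(\ref{eq:cons2}) through Corollary~\ref{cor:anti operator}, but this is only a cosmetic difference.
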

\begin{proof}
The first half part follows from Proposition \ref{pro:admissible
operator} by letting $\rho=\mathrm{ad}$. The second half part
follows from Definition~\ref{defi:admissible Novikov algebras}
directly.
\end{proof}

\begin{ex}
Among the anti-Rota-Baxter operators given in Example \ref{ex:anti
operator}, $R_{1}, R_{2}, R_{5}, R_{6}$, $R_{7}$, $R_{8}$ are
admissible. Furthermore, the corresponding admissible Novikov
algebras given by Eq.~(\ref{eq:cons}) are also listed in Example~
\ref{ex:anti operator}. Note that $(A4)_1$ in
Proposition~\ref{thm:classification} is the only non-commutative
admissible Novikov algebra obtained by this way in the sense of
isomorphisms.



\end{ex}

\begin{cor}\label{cor:admissible anti O-operator}
Let $(\frak g,[-,-])$ be a Lie algebra. Then there is a compatible
admissible Novikov algebra structure on $\frak g$ if and only if
there exists an invertible admissible anti-$\mathcal{O}$-operator
of $(\frak g,[-,-])$.
\end{cor}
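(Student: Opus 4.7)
The plan is to mimic the proof of Corollary~\ref{cor:anti O-operators and anti-pre-Lie algebras} and just track the extra admissibility identity through both directions, using Proposition~\ref{pro:admissible operator} in place of Proposition~\ref{pro:O-operator}.

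For the ``only if'' direction, assume $(\frak g, \circ)$ is a compatible admissible Novikov algebra structure on $(\frak g,[-,-])$. By Corollary~\ref{cor:anti O-operators and anti-pre-Lie algebras}, the identity map $T=\mathrm{id}$ is already an invertible anti-$\mathcal O$-operator associated to $(-\mathcal L_\circ,\frak g)$. It remains to verify Eq.~(\ref{eq:defi:admissible anti O-operators}). With $\rho=-\mathcal L_\circ$ and $T=\mathrm{id}$ the left-hand side unpacks to $2x\circ(y\circ z)-2x\circ(z\circ y)=2x\circ[y,z]$ and the right-hand side unpacks to $(x\circ y)\circ z-(x\circ z)\circ y$, so the admissibility of $T$ is exactly Eq.~(\ref{eq:defi:admissible Novikov algebras1}).

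For the ``if'' direction, let $T:V\to\frak g$ be an invertible admissible anti-$\mathcal O$-operator of $(\frak g,[-,-])$ associated to a representation $(\rho,V)$. By Proposition~\ref{pro:admissible operator}, $T$ is strong and $(V,\circ)$ defined by Eq.~(\ref{eq:pro:O-operator}) is an admissible Novikov algebra. Then, exactly as in the proof of Corollary~\ref{cor:anti O-operators and anti-pre-Lie algebras}, the invertibility of $T$ allows us to transport this structure to $\frak g$ via
\begin{equation*}
x\circ_{\frak g} y=T(T^{-1}(x)\circ T^{-1}(y))=-T(\rho(x)T^{-1}(y)),\;\;\forall x,y\in\frak g,
\end{equation*}
and the same computation as in that corollary shows $(\frak g,\circ_{\frak g})$ is a compatible anti-pre-Lie algebra structure on $(\frak g,[-,-])$.

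The only thing left is to check that $(\frak g,\circ_{\frak g})$ further satisfies Eq.~(\ref{eq:defi:admissible Novikov algebras1}). Since by construction $T:(V,\circ)\to(\frak g,\circ_{\frak g})$ is a linear bijection satisfying Eq.~(\ref{eq:pro:O-operator2}), it is an isomorphism of algebras, and the admissible Novikov identity Eq.~(\ref{eq:defi:admissible Novikov algebras1}) is preserved under isomorphism. Thus $(\frak g,\circ_{\frak g})$ is the desired compatible admissible Novikov structure. The main (and essentially only) step requiring care is the unpacking in the ``only if'' direction to confirm that Eq.~(\ref{eq:defi:admissible anti O-operators}) reduces precisely to Eq.~(\ref{eq:defi:admissible Novikov algebras1}); everything else is an immediate consequence of Proposition~\ref{pro:admissible operator} and Corollary~\ref{cor:anti O-operators and anti-pre-Lie algebras}.
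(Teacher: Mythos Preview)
Your proof is correct and follows essentially the same approach as the paper. The only cosmetic difference is in the ``if'' direction: the paper verifies Eq.~(\ref{eq:defi:admissible Novikov algebras1}) directly on $(\frak g,\circ_{\frak g})$ by unwinding the admissibility of $T$, whereas you first invoke Proposition~\ref{pro:admissible operator} on $(V,\circ)$ and then transport the identity via the isomorphism $T$ --- these are equivalent.
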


\begin{proof}
Let $x,y,z\in A$. Suppose that $(\frak g,\circ)$ is a compatible
admissible Novikov algebra structure on $(\frak g,[-,-])$. Then by
Eq.~(\ref{eq:defi:admissible Novikov algebras1}), we have
$$2\mathcal{L}_{\circ}(x)\mathcal{L}_{\circ}(y)z-2\mathcal{L}_{\circ}(x)\mathcal{L}_{\circ}(z)y=\mathcal{L}_{\circ}(\mathcal{L}_{\circ}(x)y)z-\mathcal{L}_{\circ}(\mathcal{L}_{\circ}(x)z)y.$$
By Corollary \ref{cor:anti O-operators and anti-pre-Lie algebras},
$T=\mathrm{id}$ is an invertible admissible
anti-$\mathcal{O}$-operator of $(\frak g,[-,-])$ associated to
$(-\mathcal{L}_{\circ},\frak g)$. Conversely, suppose that
$T:V\rightarrow \frak g$ is an invertible admissible
anti-$\mathcal{O}$-operator of $(\frak g,[-,-])$ associated to
$(\rho, V)$. Let $x\circ_{\frak g} y=-T(\rho(x)T^{-1}(y))$. Then
\begin{eqnarray*}
2x\circ_{\frak g}[y,z]
&=&2T\rho(x)\rho(y)T^{-1}(z)-2T\rho(x)\rho(z)T^{-1}(y)\\
&=&T(\rho(T(\rho(x)T^{-1}(y)))T^{-1}(z))-T(\rho(T(\rho(x)T^{-1}(z)))T^{-1}(y))\\
&=&(x\circ_{\frak g} y)\circ_{\frak g} z-(x\circ_{\frak g} z)\circ_{\frak g} y.
\end{eqnarray*}
By Corollary \ref{cor:anti O-operators and anti-pre-Lie algebras},
$(\frak g,\circ_{\frak g})$ is a compatible admissible Novikov algebra
structure on $(\frak g,[-,-])$.
\end{proof}

\begin{pro}
Let $\mathcal{B}$ be a nondegenerate commutative 2-cocycle on a
Lie algebra $(\frak g,[-,-])$. Define a bilinear operation
$\circ: {\frak g}\otimes \frak g\rightarrow \frak g$ by Eq.~(\ref{eq:thm:commutative
2-cocycles and anti-pre-Lie algebras}). Then $(\frak g,\circ)$ is an
admissible Novikov algebra if and only if the following equation
holds:
\begin{equation}\label{eq:addition}
2\mathcal{B}([x,w],[y,z])=\mathcal{B}([x\circ y,
w],z)-\mathcal{B}([x\circ z,w],y)),\;\;\forall x,y,z,w\in \frak g.
\end{equation}
\end{pro}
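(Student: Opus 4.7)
The plan is to reduce the admissible Novikov condition to the stated bilinear identity via the defining invariance relation for $\circ$ and the nondegeneracy (and symmetry) of $\mathcal{B}$. By Theorem~\ref{thm:commutative 2-cocycles and anti-pre-Lie algebras}, $(\frak g,\circ)$ is already an anti-pre-Lie algebra, so the only thing to check is when the extra identity
\begin{equation*}
2x\circ[y,z]=(x\circ y)\circ z-(x\circ z)\circ y,\quad \forall x,y,z\in\frak g,
\end{equation*}
which is Eq.~(\ref{eq:defi:admissible Novikov algebras1}), holds on $(\frak g,\circ)$.

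Since $\mathcal{B}$ is nondegenerate, the above identity is equivalent to its ``pairing'' with an arbitrary $w\in\frak g$, that is,
\begin{equation*}
2\,\mathcal{B}(x\circ[y,z],w)=\mathcal{B}((x\circ y)\circ z,w)-\mathcal{B}((x\circ z)\circ y,w),\quad\forall x,y,z,w\in\frak g.
\end{equation*}
I will then transform both sides by a single application of the invariance relation $\mathcal{B}(u\circ v,w)=\mathcal{B}(v,[u,w])$ from Eq.~(\ref{eq:thm:commutative 2-cocycles and anti-pre-Lie algebras}) together with the symmetry of $\mathcal{B}$. For the left-hand side, $2\mathcal{B}(x\circ[y,z],w)=2\mathcal{B}([y,z],[x,w])=2\mathcal{B}([x,w],[y,z])$. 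For the right-hand side, $\mathcal{B}((x\circ y)\circ z,w)=\mathcal{B}(z,[x\circ y,w])=\mathcal{B}([x\circ y,w],z)$ and similarly $\mathcal{B}((x\circ z)\circ y,w)=\mathcal{B}([x\circ z,w],y)$. Matching the two sides produces exactly Eq.~(\ref{eq:addition}).

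Since all the rewrites are bi-implications (nondegeneracy gives the forward direction of the first equivalence, symmetry and invariance are identities, and the quantifier ``for all $w$'' matches the universal quantifier in Eq.~(\ref{eq:addition})), the resulting chain is an ``if and only if''. There is no serious obstacle: the only point requiring care is keeping track of the symmetry of $\mathcal{B}$ when swapping arguments, and using the universal quantification over $w$ to invoke nondegeneracy cleanly rather than trying to prove the operator identity directly.
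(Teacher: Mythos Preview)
Your proof is correct and follows essentially the same approach as the paper: both reduce the question to the admissible Novikov identity (since $(\frak g,\circ)$ is already anti-pre-Lie by Theorem~\ref{thm:commutative 2-cocycles and anti-pre-Lie algebras}), pair it against an arbitrary $w$ via nondegeneracy, and then apply the invariance relation $\mathcal{B}(u\circ v,w)=\mathcal{B}(v,[u,w])$ together with symmetry of $\mathcal{B}$ to obtain Eq.~(\ref{eq:addition}). The paper simply packages your three rewrites into a single displayed identity.
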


\begin{proof}
By Theorem \ref{thm:commutative 2-cocycles and anti-pre-Lie
algebras}, $(\frak g,\circ)$ is an anti-pre-Lie algebra. Moreover, for
all $x,y,z,w\in \frak g$,
\begin{eqnarray*}
2\mathcal{B}([x,w],[y,z])+\mathcal{B}(y,[x\circ
z,w])-\mathcal{B}(z,[x\circ y,w])
=\mathcal{B}(w,2x\circ[y,z]-(x\circ z)\circ y+(x\circ y)\circ z).
\end{eqnarray*}
Therefore $(\frak g,\circ)$ is an admissible Novikov algebra if and only
if Eq.~(\ref{eq:addition}) holds.
\end{proof}

\begin{rmk} The above conclusion can be obtained from another
approach. Let $T:\frak g\rightarrow \frak g^{*}$ be a linear map
defined by Eq.~(\ref{eq:thm:commutative 2-cocycles and
anti-pre-Lie algebras2}). Then it is straightforward to show that
$T^{-1}$ is an admissible anti-$\mathcal{O}$-operator of $(\frak
g,[-,-])$ associated to $(\mathrm{ad}^{*},\frak g^{*})$ if and
only if Eq.~(\ref{eq:addition}) holds. Thus the same conclusion
follows.
\end{rmk}



\begin{ex}
Let $(A,\circ)$ be an anti-pre-Lie algebra in ~Proposition
\ref{ex:anti-pre-Lie algebras from symmetric bilinear forms}, or
equivalently, one of the anti-pre-Lie algebras given in
Proposition~\ref{pro:classification}. Then $(A,\circ)$ is an
admissible Novikov algebra if and only if $(A,\circ)$ belongs to
the following cases: $(B1)$, $(B2)_k$ ($k\in \{2,\cdots,n\}$) and
$(B5)$ with $\mathrm{dim}A=2$. In particular, when $\mathcal B$ is
nondegenerate, that is, $\mathcal B$ is a nondegenerate
commutative 2-cocycle on the sub-adjacent Lie algebra, $(A,\circ)$
is an admissible Novikov algebra if and only if $(A,\circ)$
belongs to the following cases: $(B1)$ with $\dim A=1$ and $(B5)$
with $\mathrm{dim}A=2$ which is isomorphic to $(A4)_0$ in
Proposition~\ref{thm:classification}.
\end{ex}


\subsection{Constructions from commutative associative algebras}

We introduce the notion of admissible pairs on commutative
associative algebras as a generalization  of derivations. Then we
give the constructions of Novikov algebras and the corresponding
admissible Novikov algebras from commutative associative algebras
with admissible pairs, generalizing the known various
constructions of Novikov algebras from commutative associative
algebras with derivations. Especially under certain conditions,
they coincide with the admissible Novikov algebra structures
induced from the natural nondegenerate  commutative 2-cocycles on
the Lie algebras obtained from the commutative associative
algebras with derivations.

\begin{defi}
Let $(A,\cdot)$ be a commutative associative algebra. {A pair of
linear maps $P,Q:A\rightarrow A$ is called \textbf{admissible}}
if
\begin{equation}\label{eq:defi:admissible pair}
Q(x \cdot y)=Q(x)\cdot y+x\cdot P(y), \;\;\forall x,y\in A.
\end{equation}
We denote it by $(P,Q)$.
\end{defi}

\begin{ex}
Let $(A,\cdot)$ be a commutative associative algebra.
\begin{enumerate}
\item Let $P$ be a derivation on $(A,\cdot)$, that is, $(A,P)$ is
a commutative differential algebra (\cite{GK}).
\begin{enumerate}
\item $(P,P)$ is an admissible pair. \item $(P,P+\lambda {\rm
Id})$ is an admissible pair, where $\lambda\in \mathbb F$. \item
$(P,P+\mathcal L_\cdot (a))$ is an admissible pair, where $a\in
A$. \item Let $Q:A\rightarrow A$ be a linear map. Recall
(\cite{LLB}) that $Q$ is called {\bf admissible to $(A,P)$} if
$$Q(x)\cdot y=Q(x\cdot y)+x\cdot P(y),\;\;\forall x,y\in A.$$
This notion was introduced to construct a suitable dual
representation of a commutative differential algebra. 
Then $Q$ is admissible to $(A,P)$ if and only if $(P,-Q)$ is an
admissible pair. \item Let $\mathcal B$ be a nondegenerate
symmetric invariant bilinear form on $(A,\cdot)$, that is,
$(A,\cdot,\mathcal B)$ is a symmetric (commutative) Frobenius
algebra.  Let $\hat P:A\rightarrow A$ be the adjoint operator of
$P$ with respect to $\mathcal B$, that is,
$$\mathcal B(P(x),y)=\mathcal B(x,\hat P(y)),\forall x,y\in A.$$
Then $\hat P$ is admissible to $(A,P)$ (\cite{LLB}), or
equivalently, $(P,-\hat P)$ is an admissible pair.
\end{enumerate}
\item Let $Q:A\rightarrow A$ be a linear map satisfying
$$Q(x\cdot y)=Q(x)\cdot y+x\cdot Q(y)+\lambda x\cdot y,\;\;\forall
x,y\in A,$$ where $\lambda\in \mathbb F$. Then $(Q+\lambda {\rm
Id},Q)$ is an admissible pair. \item Let $Q:A\rightarrow A$ be a
linear map satisfying
$$Q(x\cdot y)=Q(x)\cdot y+x\cdot Q(y)+a\cdot x\cdot y,\;\;\forall
x,y\in A,$$ where $a\in A$. Then $(Q+\mathcal L_\cdot (a),Q)$ is
an admissible pair.
\end{enumerate}

\end{ex}

The admissible pairs on commutative associative algebras are
related to commutative 2-cocycles on Lie algebras.

\begin{pro}\label{ex:commutative 2-cocycle}
Let $(P,Q)$ be an admissible pair on a commutative associative
algebra $(A,\cdot)$. Define a bilinear operation $[-,-]:A\otimes
A\rightarrow A$ by
\begin{equation}\label{eq:ex:commutative 2-cocycle}
[x,y]=Q(x)\cdot y-x\cdot Q(y)=P(x)\cdot y-x\cdot P(y), \forall x,y\in A.
\end{equation}
Then $(A,[-,-])$ is a Lie algebra. Moreover, if there is a
symmetric invariant bilinear form $\mathcal{B}$ on $(A,\cdot)$ 
 in the sense of  Eq.~(\ref{eq:inv-asso}),
then $\mathcal{B}$ is a commutative 2-cocycle on $(A,[-,-])$.
\end{pro}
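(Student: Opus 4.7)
The plan is to verify the proposition in three stages: first reconcile the two expressions for the bracket, then prove the Jacobi identity, and finally check the commutative 2-cocycle condition for $\mathcal{B}$.

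To see that $Q(x)\cdot y - x\cdot Q(y) = P(x)\cdot y - x\cdot P(y)$, I would compute $Q(x\cdot y) = Q(y\cdot x)$ using admissibility on both sides and the commutativity of $\cdot$: this gives $Q(x)\cdot y + x\cdot P(y) = Q(y)\cdot x + y\cdot P(x)$, which rearranges to the desired identity. Setting $D := Q-P$, the same computation yields the auxiliary identity $D(x)\cdot y = x\cdot D(y)$ for all $x,y\in A$, which will be crucial for Jacobi. Skew-symmetry of $[-,-]$ is then immediate from either formula.

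For the Jacobi identity, I would fix the formula $[a,b] = Q(a)\cdot b - a\cdot Q(b)$ and expand $[[x,y],z]$ by computing $Q([x,y])$ via admissibility, producing six terms involving $Q^2(x)yz$, $Q(x)P(y)z$, $-Q(x)Q(y)z$, $-x\cdot PQ(y)\cdot z$, $-Q(x)\cdot y\cdot Q(z)$, $x\cdot Q(y)Q(z)$. Taking the cyclic sum over $(x,y,z)$ and grouping by the ``shape'' of each monomial, commutativity cancels the terms $-Q(x)\cdot y\cdot Q(z)$ against $x\cdot Q(y) Q(z)$ directly. The remaining terms split into two triples: $Q^2(x)yz - PQ(x)yz = DQ(x)\cdot yz$ (and cyclic permutations), and $Q(x)P(y)z - Q(x)Q(y)z = -Q(x)\cdot D(y)\cdot z$ (and cyclic permutations). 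Applying the auxiliary identity $D(y)\cdot z = y\cdot D(z)$ twice rewrites $Q(x)\cdot D(y)\cdot z$ as $yz\cdot DQ(x)$, so the second triple exactly cancels the first. The expected main obstacle is organizing this bookkeeping and recognizing that $D(a)\cdot b = a\cdot D(b)$ is precisely what makes the non-symmetric $Q^2$- and $PQ$-terms combine.

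For the second assertion, I would use the $P$-form $[x,y] = P(x)\cdot y - x\cdot P(y)$ together with the invariance of $\mathcal B$ on $(A,\cdot)$. Two applications of invariance and the symmetry of $\mathcal B$ show
\begin{equation*}
\mathcal B([x,y],z) = \mathcal B(P(x)\cdot y, z) - \mathcal B(x\cdot P(y), z) = \mathcal B(P(x), y\cdot z) - \mathcal B(P(y), z\cdot x).
\end{equation*}
Cycling gives $\mathcal B([y,z],x) = \mathcal B(P(y),zx) - \mathcal B(P(z),xy)$ and $\mathcal B([z,x],y) = \mathcal B(P(z),xy) - \mathcal B(P(x),yz)$, and the three terms telescope to zero. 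This step is straightforward once the bracket is written in the $P$-form, so the main work really lies in the Jacobi computation.
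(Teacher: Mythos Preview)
Your argument is correct. The equality of the two bracket expressions and the commutative 2-cocycle verification match the paper's proof essentially verbatim (the paper uses $Q$ where you use $P$, but since the bracket is the same either way this is immaterial).

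The one genuine difference is the Jacobi identity. The paper does not compute it directly: it observes that $(A,[-,-])$ is the sub-adjacent Lie algebra of the Novikov algebra $x\star y = x\cdot Q(y)$ (Proposition~\ref{ex:Novikov algebra from admissible pair}), and hence inherits Jacobi from Lie-admissibility of Novikov algebras. Your route is a self-contained computation, isolating the difference operator $D=Q-P$ and using the centrality-type identity $D(a)\cdot b = a\cdot D(b)$ to force the non-symmetric $Q^2$ and $PQ$ terms to cancel. Your method is more hands-on but avoids any forward reference; the paper's method is shorter and explains structurally why the bracket is Lie, at the cost of invoking a result proved later. One minor point: your cancellation $DQ(x)\cdot yz = Q(x)\cdot D(y)\cdot z$ really only needs a single application of $D(a)\cdot b = a\cdot D(b)$ with $a=Q(x)$, $b=y$, not two.
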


\begin{proof}
The first half part is obtained by a direct proof or follows from
Proposition~\ref{ex:Novikov algebra from admissible pair} or
Corollary~\ref{cor:admissible Novikov algebra from admissible
pair} since $(A,[-,-])$ is the sub-adjacent Lie algebra of an
admissible Novikov algebra. Note that the second equality in Eq.~(\ref{eq:ex:commutative 2-cocycle}) is due to
Eq.~(\ref{eq:defi:admissible pair}) and the fact that $(A,\cdot)$ is commutative.
For the second half part, let
$x,y,z\in A$. Then we have
\begin{eqnarray*}
&\mathcal{B}([x,y],z)=\mathcal{B}(Q(x)\cdot y,z)-\mathcal{B}(x\cdot Q(y),z)=\mathcal{B}(Q(x),y\cdot z)-\mathcal{B}(Q(y),x\cdot z),\\
&\mathcal{B}([y,z],x)=\mathcal{B}(Q(y)\cdot z,x)-\mathcal{B}(y\cdot Q(z),x)=\mathcal{B}(Q(y),z\cdot x)-\mathcal{B}(Q(z),y\cdot x),\\
&\mathcal{B}([z,x],y)=\mathcal{B}(Q(z)\cdot
x,y)-\mathcal{B}(z\cdot Q(x),y)=\mathcal{B}(Q(z),x\cdot
y)-\mathcal{B}(Q(x),z\cdot y).
\end{eqnarray*}
Thus $\mathcal{B}$ is a commutative 2-cocycle on $(A,[-,-])$.
\end{proof}

In particular, as a special case, we get the following conclusion.

\begin{cor}\label{cor:construction}
Let $P$ be a derivation on a commutative associative algebra
$(A,\cdot)$. Then Eq.~(\ref{eq:ex:commutative 2-cocycle}) defines
a Lie algebra $(A,[-,-])$. If $\mathcal B$ is a symmetric
invariant bilinear form on $(A,\cdot)$, then $\mathcal B$ is a
commutative 2-cocycle on $(A,[-,-])$. In particular, if
$(A,\cdot,\mathcal B)$ is a 
symmetric (commutative) Frobenius
algebra, 
then $\mathcal B$ is a
nondegenerate commutative 2-cocycle on the Lie algebra $(A,[-,-])$.
\end{cor}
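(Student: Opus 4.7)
The plan is to obtain this statement as a direct specialization of Proposition~\ref{ex:commutative 2-cocycle}, by first checking that a single derivation gives rise to an admissible pair. More precisely, if $P$ is a derivation on $(A,\cdot)$, then
\[
P(x\cdot y) = P(x)\cdot y + x\cdot P(y), \qquad \forall x,y\in A,
\]
which is exactly Eq.~(\ref{eq:defi:admissible pair}) with $Q=P$. Hence $(P,P)$ is an admissible pair, as already observed in the list of examples following the definition.

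Once this is in place, I would simply feed $(P,P)$ into Proposition~\ref{ex:commutative 2-cocycle}. The bilinear operation in Eq.~(\ref{eq:ex:commutative 2-cocycle}) then becomes
\[
[x,y] = P(x)\cdot y - x\cdot P(y), \qquad \forall x,y\in A,
\]
so the proposition produces precisely the Witt type Lie algebra recalled in the introduction (see Eq.~(\ref{eq:Lie algebras form differential commutative associative algebras})), and moreover tells us that any symmetric invariant bilinear form $\mathcal B$ on $(A,\cdot)$ (in the sense of Eq.~(\ref{eq:inv-asso})) is a commutative 2-cocycle on $(A,[-,-])$. This handles the first two assertions simultaneously.

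For the final assertion, recall that by definition a symmetric (commutative) Frobenius algebra $(A,\cdot,\mathcal B)$ is a commutative associative algebra equipped with a \emph{nondegenerate} symmetric invariant bilinear form. Therefore $\mathcal B$ is automatically nondegenerate, and the previous step identifies it as a commutative 2-cocycle on $(A,[-,-])$; combining these two facts yields a nondegenerate commutative 2-cocycle on the Lie algebra, as claimed. There is no genuine obstacle here: the whole corollary is really just the observation that a derivation is the simplest instance of an admissible pair, so that Proposition~\ref{ex:commutative 2-cocycle} applies verbatim.
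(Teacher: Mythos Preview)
Your proposal is correct and matches the paper's approach exactly: the paper introduces this corollary with the phrase ``In particular, as a special case, we get the following conclusion'' and gives no separate proof, since it is indeed just Proposition~\ref{ex:commutative 2-cocycle} applied to the admissible pair $(P,P)$ coming from a derivation. Your added remark unpacking the Frobenius condition for the final clause is also accurate.
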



\begin{pro}\label{ex:Novikov algebra from admissible pair}
Let $(P,Q)$ be an admissible pair on a commutative associative
algebra $(A,\cdot)$. Define a bilinear operation $\star$ on $A$ by
\begin{equation}\label{eq:ex:Novikov algebra from admissible pair}
x\star y=x\cdot Q(y), \;\;\forall x,y\in A.
\end{equation}
Then $(A,\star)$ is a Novikov algebra.
\end{pro}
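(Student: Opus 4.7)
My plan is to verify directly that $\star$ defined by $x\star y = x\cdot Q(y)$ satisfies both the pre-Lie identity (Eq.~(\ref{eq:defi:pre-Lie algebras})) and the right-symmetry identity (Eq.~(\ref{eq:defi:Novikov algebras1})), using only commutativity and associativity of $\cdot$ together with the admissibility relation $Q(x\cdot y)=Q(x)\cdot y+x\cdot P(y)$.

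First I would dispatch the right-symmetry identity, which should be the easy half. Indeed,
\[
(x\star y)\star z = (x\cdot Q(y))\cdot Q(z) = x\cdot Q(y)\cdot Q(z),
\]
and likewise $(x\star z)\star y = x\cdot Q(z)\cdot Q(y)$, so commutativity of $\cdot$ gives Eq.~(\ref{eq:defi:Novikov algebras1}) at once. This step uses neither $P$ nor the admissibility relation.

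Next I would tackle the pre-Lie identity by computing the associator $(x\star y)\star z - x\star(y\star z)$. Expanding,
\[
(x\star y)\star z - x\star(y\star z)=x\cdot Q(y)\cdot Q(z) - x\cdot Q(y\cdot Q(z)).
\]
Applying admissibility to $Q(y\cdot Q(z)) = Q(y)\cdot Q(z) + y\cdot P(Q(z))$, the first term cancels and we obtain $-x\cdot y\cdot P(Q(z))$. Since $(A,\cdot)$ is commutative, this expression is symmetric in $x$ and $y$, so swapping $x\leftrightarrow y$ gives the same value, which is exactly Eq.~(\ref{eq:defi:pre-Lie algebras}).

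The main (really only) obstacle is making sure to use the correct form of the admissibility identity: I must apply it with the outer argument $Q(z)$ rather than $z$ so that the $Q(y)\cdot Q(z)$ term matches and cancels against $x\cdot Q(y)\cdot Q(z)$. Once that is done, the entire verification amounts to two short commutative algebra calculations. Note that $P$ only appears in the intermediate step and drops out of the final symmetry check; it is the mere existence of some $P$ making $(P,Q)$ admissible that is used.
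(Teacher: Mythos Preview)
Your proof is correct and follows essentially the same route as the paper's own argument: compute the associator $(x\star y)\star z - x\star(y\star z) = -x\cdot y\cdot P(Q(z))$ via the admissibility relation to get left-symmetry, and use commutativity of $\cdot$ for the right-commutative identity. The only difference is the order in which you treat the two identities.
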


\begin{proof}
Let $x,y,z\in A$. Then we have
\begin{eqnarray*}
(x\star y)\star z-x\star(y\star z)&=&x\cdot Q(y)\cdot Q(z)-x\cdot
Q(y\cdot Q(z))=-x\cdot y\cdot P(Q(z))\\&=&(y\star x)\star z-y\star
(x\star z),\\
(x\star y)\star z&=&x\cdot Q(y)\cdot Q(z)=(x\star z)\star y.
\end{eqnarray*}
Hence $(A,\star)$ is a Novikov algebra.
\end{proof}

\begin{ex}
Let $P$ be a derivation on a commutative associative algebra
$(A,\cdot)$. There is the following known construction of Novikov
algebras.
Define a bilinear operation $\star:A\otimes A\rightarrow A$ by
\begin{equation}x\star y=x\cdot P(y)+a\cdot x\cdot y,\;\; \forall x,y\in A.\end{equation}
Then we have the following results.
\begin{enumerate}
\item  (\cite{Gel}, S. Gelfand) $a=0$ makes $(A,\circ)$ a Novikov
algebra, corresponding to the fact that $(P,P)$ is an admissible
pair. \item (\cite{Fil1989}) $a\in\mathbb{F}$  makes $(A,\circ)$ a
Novikov algebra, corresponding to the fact that $(P,P+a {\rm Id})$
is an admissible pair. \item (\cite{Xu1996}) $a\in A$ makes
$(A,\circ)$ a Novikov algebra, corresponding to the fact that
$(P,P+\mathcal L_\cdot(a))$ is an admissible pair.
\end{enumerate}
\end{ex}


\begin{cor}\label{cor:admissible Novikov algebra from admissible pair}
Let $(P,Q)$ be an admissible pair on a commutative associative
algebra $(A,\cdot)$. Define a bilinear operation $\circ$ on $A$ by
\begin{equation}\label{eq:cor:admissible Novikov algebra from admissible pair}
x\circ y=x\cdot Q(y)+2Q(x)\cdot y,\;\;\forall x,y\in A.
\end{equation}
Then $(A,\circ)$ is an admissible Novikov algebra. Moreover the
sub-adjacent Lie algebra $(A,[-,-])$ of $(A,\circ)$ satisfies
Eq.~(\ref{eq:ex:commutative 2-cocycle}).
\end{cor}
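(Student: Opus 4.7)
The plan is to derive this corollary as an immediate combination of Proposition~\ref{ex:Novikov algebra from admissible pair} and Proposition~\ref{pro:anti-pre-Lie algebras from Novikov algebras}, without doing any new identity-chasing.

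First I would apply Proposition~\ref{ex:Novikov algebra from admissible pair} to the admissible pair $(P,Q)$: the bilinear operation $x\star y = x\cdot Q(y)$ defines a Novikov algebra structure on $A$. Then I would form the $2$-algebra of $(A,\star)$ in the sense of Eq.~(\ref{eq:pro:anti-pre-Lie algebras from Novikov algebras}); using commutativity of $\cdot$, this is
\begin{equation*}
x\star y + 2\, y\star x = x\cdot Q(y) + 2\, y\cdot Q(x) = x\cdot Q(y) + 2\, Q(x)\cdot y,
\end{equation*}
which is exactly the operation $\circ$ in Eq.~(\ref{eq:cor:admissible Novikov algebra from admissible pair}). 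By Proposition~\ref{pro:anti-pre-Lie algebras from Novikov algebras}, the $2$-algebra of a Novikov algebra is an admissible Novikov algebra, giving the first assertion.

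For the second assertion I would simply compute the commutator directly. Using commutativity of $\cdot$ once more,
\begin{equation*}
[x,y] = x\circ y - y\circ x = \bigl(x\cdot Q(y)+2Q(x)\cdot y\bigr) - \bigl(y\cdot Q(x)+2Q(y)\cdot x\bigr) = Q(x)\cdot y - x\cdot Q(y),
\end{equation*}
which coincides with the bracket defined in Eq.~(\ref{eq:ex:commutative 2-cocycle}) (and by Eq.~(\ref{eq:defi:admissible pair}) equals $P(x)\cdot y - x\cdot P(y)$ as well).

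There is no real obstacle here: the content of the corollary has already been packaged into the two propositions it cites, and the only thing that needs to be checked is that the operation $\circ$ matches the $2$-algebra of $x\star y = x\cdot Q(y)$, for which commutativity of $\cdot$ is the sole ingredient. If one preferred, one could instead verify Eqs.~(\ref{eq:defi:anti-pre-Lie algebras1}) and (\ref{eq:defi:admissible Novikov algebras1}) directly by expanding both sides using Eq.~(\ref{eq:defi:admissible pair}) and commutativity/associativity, but the $q$-algebra route is much cleaner.
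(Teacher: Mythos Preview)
Your proposal is correct and is exactly the intended argument: the paper states this corollary without proof, as it follows immediately from Proposition~\ref{ex:Novikov algebra from admissible pair} (giving the Novikov algebra $x\star y=x\cdot Q(y)$) together with Proposition~\ref{pro:anti-pre-Lie algebras from Novikov algebras} (the $2$-algebra of a Novikov algebra is admissible Novikov), and the commutator computation you wrote out.
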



\begin{ex}\label{ex:der}
Let $P$ be a derivation on a commutative associative algebra
$(A,\cdot)$. Define a bilinear operation $\circ: A\otimes
A\rightarrow A$ by
\begin{equation}
x\circ y=x\cdot P(y)+2P(x)\cdot y+a\cdot x\cdot y,\;\;\forall
x,y\in A, \end{equation} where $a\in {\mathbb F}$ or $a\in A$.
Then $(A,\circ)$ is an admissible Novikov algebra. Note that in
this case, both $(P,P+\frac{1}{3}a {\rm Id})$ ($a\in \mathbb F$)
and $(P,P+\frac{1}{3}\mathcal L_\cdot(a))$ ($a\in A$) are
admissible pairs.
\end{ex}


\begin{cor}\label{cor:invariant}
Let $Q$ be a derivation on a commutative associative algebra
$(A,\cdot)$. Suppose that $\mathcal B$ is a nondegenerate
symmetric invariant bilinear form on $(A,\cdot)$ and $\hat Q$ is
the adjoint operator of $Q$ with respect to $\mathcal B$. Then the following conditions are equivalent.
\begin{enumerate}
\item \label{it:it1}
The
admissible Novikov algebra $(A,\circ)$ defined by
Eq.~(\ref{eq:cor:admissible Novikov algebra from admissible pair})
is exactly the one defined by Eq.~(\ref{eq:thm:commutative
2-cocycles and anti-pre-Lie algebras}) through the nondegenerate
commutative 2-cocycle $\mathcal B$ on the Lie algebra $(A,[-,-])$
given by Eq.~(\ref{eq:ex:commutative 2-cocycle}).
\item \label{it:it2} $\mathcal B$ is invariant on the admissible Novikov algebra
$(A,\circ)$ defined by Eq.~(\ref{eq:cor:admissible Novikov algebra
from admissible pair}).
\item \label{it:it3}The following equation
holds:
\begin{equation}\label{eq:condd}
x\cdot Q(y)=-x\cdot \hat Q(y),\forall x,y\in A.
\end{equation}
\end{enumerate}
In particular, if $\hat Q=-Q$, that is, $Q$ is skew-self-adjoint
with respect to $\mathcal B$, then $\mathcal B$ is invariant on
the admissible Novikov algebra $(A,\circ)$ defined by
Eq.~(\ref{eq:cor:admissible Novikov algebra from admissible
pair}).
\end{cor}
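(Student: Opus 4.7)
The plan is to treat $(1) \Leftrightarrow (2)$ essentially as a definition-chase, and to reduce $(1) \Leftrightarrow (3)$ to a duality exchanging the position of $Q+\hat Q$ relative to multiplication. First, observe that by Definition~\ref{defi:quadratic} the invariance of $\mathcal B$ on $(A,\circ)$ is the identity $\mathcal B(x\circ y,z)=\mathcal B(y,[x,z])$, while the anti-pre-Lie algebra $(A,\circ')$ built from $\mathcal B$ via Eq.~(\ref{eq:thm:commutative 2-cocycles and anti-pre-Lie algebras}) is characterized by the same relation $\mathcal B(x\circ' y,z)=\mathcal B(y,[x,z])$. The nondegeneracy of $\mathcal B$ then yields $(1)\Leftrightarrow(2)$ immediately.

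Next I would compute $x\circ' y$ explicitly. Using the invariance of $\mathcal B$ on $(A,\cdot)$ (Eq.~(\ref{eq:inv-asso})) together with the adjoint defining property of $\hat Q$ and the bracket formula Eq.~(\ref{eq:ex:commutative 2-cocycle}), a short computation gives $x\circ' y = Q(x)\cdot y - \hat Q(x\cdot y)$. Since $x\circ y = x\cdot Q(y)+2Q(x)\cdot y$ by Eq.~(\ref{eq:cor:admissible Novikov algebra from admissible pair}) and $Q$ is a derivation so that $Q(x\cdot y)=Q(x)\cdot y + x\cdot Q(y)$, condition~(1) collapses to the single identity $(Q+\hat Q)(x\cdot y)=0$ for all $x,y\in A$.

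To close $(1)\Leftrightarrow(3)$, I would apply $\mathcal B$ once more. Using $\hat{\hat Q}=Q$ together with the invariance of $\mathcal B$ on $(A,\cdot)$, one gets
\begin{align*}
\mathcal B\bigl((Q+\hat Q)(x\cdot y),\,z\bigr) = \mathcal B\bigl(x\cdot y,\,(Q+\hat Q)(z)\bigr) = \mathcal B\bigl(x,\; y\cdot(Q+\hat Q)(z)\bigr),
\end{align*}
so the nondegeneracy of $\mathcal B$ shows that $(Q+\hat Q)(x\cdot y)=0$ for all $x,y$ if and only if $y\cdot(Q+\hat Q)(z)=0$ for all $y,z$, i.e.\ Eq.~(\ref{eq:condd}). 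The ``in particular'' clause then follows at once, since $\hat Q=-Q$ trivially forces (3), hence (2).

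The main obstacle is precisely the switch between ``$(Q+\hat Q)$ vanishes on $A\cdot A$'' (the raw form of (1)) and ``$A\cdot(Q+\hat Q)(A)=0$'' (the content of (3)): these look a priori different, and it is exactly the nondegenerate invariant bilinear form $\mathcal B$ that conjugates one into the other via the identity displayed above. Everything else in the argument is formal manipulation of the derivation identity and the adjoint property.
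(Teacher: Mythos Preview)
Your proof is correct. The route differs slightly from the paper's: the paper proves $(2)\Leftrightarrow(3)$ by computing $\mathcal B(x\circ y,z)-\mathcal B(y,[x,z])$ directly, using the admissible-pair identity $\hat Q(x\cdot z)=\hat Q(x)\cdot z - x\cdot Q(z)$ to arrive at $\mathcal B\bigl(y,\,z\cdot(Q+\hat Q)(x)\bigr)$, so that nondegeneracy gives condition~(3) immediately without any intermediate step. By contrast, you pass through~(1), obtain the dual condition $(Q+\hat Q)(x\cdot y)=0$, and then need the extra duality flip via $\mathcal B$ to reach~(3). What you flag as ``the main obstacle'' is therefore an artifact of your chosen path; the paper's computation sidesteps it entirely. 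Both arguments are short and valid, but the paper's is marginally more direct, while yours has the small advantage of producing the explicit formula $x\circ' y = Q(x)\cdot y - \hat Q(x\cdot y)$, which reappears in Remark~\ref{rmk:cons}.
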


\begin{proof}
(\ref{it:it1})$\Longleftrightarrow$ (\ref{it:it2}). It follows from Corollary~\ref{cor:corres}.

(\ref{it:it2})$\Longleftrightarrow$ (\ref{it:it3}).
Let $x,y,z\in A$. Then we have
\begin{eqnarray*}
\mathcal B(x\circ y,z)-\mathcal B(y,[x,z]) &=&\mathcal B(x\cdot
Q(y)+2y\cdot Q(x), z)-\mathcal B(y,Q(x)\cdot z-x\cdot Q(z))\\
&=&\mathcal B(y,Q(x)\cdot z+x\cdot Q(z))+\mathcal B(y,\hat Q(x\cdot z))\\
&=&\mathcal B(y,Q(x)\cdot z+x\cdot Q(z))+\mathcal B(y, -x\cdot
Q(z)+\hat Q(x)\cdot z))\\
&=&\mathcal B(y,z\cdot Q(x)+z\cdot \hat Q(x)).
\end{eqnarray*}
Hence $\mathcal B$ is invariant on $(A,\circ)$ if and only if
Eq.~(\ref{eq:condd}) holds. The rest follows immediately.
\end{proof}

\begin{rmk}\label{rmk:cons} With the conditions in Corollary~\ref{cor:invariant},
$(Q,-\hat Q)$ is an admissible pair. Then by Example~\ref{ex:der}
and Corollary~\ref{cor:admissible Novikov algebra from admissible
pair} respectively, there are two admissible Novikov algebras as
follows.
\begin{equation*}\label{rmk:admiisible Novikov algebras from
derivations2} x\circ_{1}y=x\cdot Q(y)+2Q(x)\cdot y,\;\; \forall
x,y\in A,
\end{equation*}
\begin{equation*}\label{rmk:admiisible Novikov algebras from derivations3}
x\circ_{2}y=-x\cdot \hat Q(y)-2 \hat Q(x)\cdot y,\;\; \forall
x,y\in A.
\end{equation*}
On the other hand, there is still a compatible anti-pre-Lie
algebra structure $(A,\circ_3)$ defined by
Eq.~(\ref{eq:thm:commutative 2-cocycles and anti-pre-Lie
algebras}) on the Lie algebra $(A,[-,-])$ given by
Eq.~(\ref{eq:ex:commutative 2-cocycle}) as follows.
\begin{equation}\label{rmk:admiisible Novikov algebras from derivations4}
x\circ_3 y=Q(x)\cdot y+x\cdot Q(y)-\hat Q(x)\cdot y, \;\;\forall
x,y\in A.
\end{equation}
Moreover, in general $(A,\circ_3)$ is only an anti-pre-Lie
algebra, and it is an admissible Novikov algebra if and only if
$$x\cdot(Q\hat Q-\hat Q Q)(y)\cdot z=x\cdot y\cdot(Q\hat Q-\hat Q Q)(z), \;\;\forall x,y,z\in A. $$
Note that $(A,\circ_1)$, $(A,\circ_2)$ and $(A,\circ_3)$ are the
same if and only if Eq.~(\ref{eq:condd}) holds, including the
conclusion given in Corollary~\ref{cor:invariant}.
\end{rmk}


\begin{ex}\label{ex:admiisible Novikov algebras from derivations4}
Let $(A,\cdot)$ be the 2-dimensional complex commutative
associative algebra with a basis $\{e_{1}, e_{2}\}$ whose non-zero
products are given by
$$e_{1}\cdot e_{1}=e_{1}, e_{1}\cdot
e_{2}=e_{2}.$$ Then there is a nondegenerate symmetric invariant
bilinear form $\mathcal{B}$ on $(A,\cdot)$ where the non-zero
values are given by $\mathcal{B}(e_{1}, e_{2})=1$. Let
$Q:A\rightarrow A$ be a linear map given by $Q(e_{1})=0,
Q(e_{2})=e_{2}$. Then $Q$ is a derivation of $(A, \cdot)$ and the
adjoint
operator $\hat Q$ 
is given by $\hat Q(e_{1})=e_{1}, \hat Q(e_{2})=0$. The three
anti-pre-Lie algebras $(A,\circ_1)$, $(A,\circ_2)$ and
$(A,\circ_3)$ given in Remark~\ref{rmk:cons} are different admissible
Novikov algebras, which are isomorphic to $(A4)_1$, $(A4)_{-2}$
and $(A4)_0$ in Proposition~\ref{thm:classification} respectively.
\end{ex}

At the end of this subsection, we extend the construction of Lie
algebras from Novikov algebras or admissible Novikov algebras
given by Proposition~\ref{lem:BN} and Corollary~\ref{cor:BN-A}.

\begin{pro}\label{pro:from Novikov to Lie}
Let $(A,\cdot)$ be a commutative associative algebra with an
admissible pair $(P,Q)$. Let $(V,\star)$ be a Novikov algebra.
Define a bilinear operation $[-,-]$ on $A\otimes V$ by
\begin{equation}\label{eq:pro:from Novikov to Lie1}
[x\otimes u, y\otimes v]=Q(x)\cdot y\otimes u\star v-x\cdot
Q(y)\otimes v\star u, \;\;\forall x,y\in A, u,v\in V.
\end{equation}
Then $(A\otimes V, [-,-])$ is a Lie algebra.
\end{pro}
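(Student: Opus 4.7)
The plan is to verify the two Lie algebra axioms directly. Skew-symmetry of the bracket in Eq.~(\ref{eq:pro:from Novikov to Lie1}) is immediate: interchanging $(x,u)\leftrightarrow (y,v)$ on the right-hand side and then moving the $Q$-image to the other factor using the commutativity of $\cdot$ on $A$, one finds $[y\otimes v,x\otimes u]=Q(y)\cdot x\otimes v\star u-y\cdot Q(x)\otimes u\star v=x\cdot Q(y)\otimes v\star u-Q(x)\cdot y\otimes u\star v=-[x\otimes u,y\otimes v]$.

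For the Jacobi identity, I would expand each triple bracket in the cyclic sum $J:=\sum_{\mathrm{cyc}}[[x_1\otimes u_1,x_2\otimes u_2],x_3\otimes u_3]$ by applying Eq.~(\ref{eq:pro:from Novikov to Lie1}) twice, invoking the admissibility relation $Q(a\cdot b)=Q(a)\cdot b+a\cdot P(b)$ whenever $Q$ must be applied to a product in $A$. Each inner bracket then contributes six terms, yielding eighteen raw terms whose $A$-coefficients have one of the six shapes $Q^{2}(x_i)\cdot x_j\cdot x_k$, $Q(x_i)\cdot P(x_j)\cdot x_k$, $Q(x_i)\cdot x_j\cdot Q(x_k)$, $Q(x_i)\cdot Q(x_j)\cdot x_k$, $x_i\cdot PQ(x_j)\cdot x_k$, or $x_i\cdot Q(x_j)\cdot Q(x_k)$, while the $V$-coefficients are of the forms $(u_a\star u_b)\star u_c$ or $u_a\star (u_b\star u_c)$.

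The decisive step is to show that, after using commutativity and associativity of $\cdot$ on $A$ together with the Novikov right-symmetry $(a\star b)\star c=(a\star c)\star b$ on $V$ to normalise $V$-parts, every one of the eighteen terms can be brought to an $A$-coefficient of the form $Q(x_i)\cdot Q(x_j)\cdot x_k$ with $(i,j,k)$ a cyclic shift of $(1,2,3)$. The absorption of the $Q^{2}$, $Q\cdot P$ and $PQ$ shapes rests on the identity
\begin{equation*}
Q^{2}(x_i)\cdot x_j+Q(x_i)\cdot P(x_j)-PQ(x_i)\cdot x_j=Q(x_i)\cdot Q(x_j),
\end{equation*}
obtained by applying admissibility to both $Q(Q(x_i)\cdot x_j)$ and $Q(x_j\cdot Q(x_i))$ and equating the resulting expressions via $Q(x_i)\cdot x_j=x_j\cdot Q(x_i)$. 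The $Q\cdot x\cdot Q$ shape collapses onto $Q\cdot Q\cdot x$ by commutativity alone.

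Once the regrouping is done, the $V$-coefficient of each packet $Q(x_i)\cdot Q(x_j)\cdot x_k$ reduces to
\begin{equation*}
(u_i\star u_j)\star u_k-(u_j\star u_i)\star u_k+u_j\star(u_i\star u_k)-u_i\star(u_j\star u_k),
\end{equation*}
which vanishes by the pre-Lie identity Eq.~(\ref{eq:defi:pre-Lie algebras}) that holds in any Novikov algebra. Hence $J=0$. The only real obstacle is the bookkeeping that converts the eighteen raw terms into the three packets; once the displayed admissibility-derived identity is in place, all remaining cancellations are consequences of the standard Novikov and pre-Lie identities on the $V$-factor.
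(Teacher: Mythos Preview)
Your argument is correct and follows the same overall strategy as the paper: expand the Jacobiator directly, use the admissibility relation to handle $Q$ applied to products, and kill the resulting packets with the pre-Lie and Novikov identities on $V$.

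The organization differs slightly. The paper applies admissibility in the form $Q(Q(x)\cdot y)=Q(y\cdot Q(x))=Q(y)\cdot Q(x)+y\cdot P(Q(x))$, so that only the two $A$-coefficient types $Q(x_i)\cdot Q(x_j)\cdot x_k$ and $x_i\cdot x_j\cdot P(Q(x_k))$ ever appear. This yields six packets: the three $Q\cdot Q\cdot x$ packets carry exactly the pre-Lie associator and vanish by Eq.~(\ref{eq:defi:pre-Lie algebras}), while the three $x\cdot x\cdot PQ$ packets carry the difference $(w\star u)\star v-(w\star v)\star u$ and vanish by the Novikov right-symmetry Eq.~(\ref{eq:defi:Novikov algebras1}). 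You instead expand in the other direction, producing the $Q^2$, $Q\cdot P$ and $PQ$ shapes, and then use your displayed identity (which is precisely the statement that the two ways of expanding $Q(Q(x_i)\cdot x_j)$ agree) together with a preliminary Novikov normalization of the $V$-parts to fold everything into three $Q\cdot Q\cdot x$ packets. The paper's route avoids the intermediate shapes and keeps the roles of the two $V$-identities visibly separate; yours is a touch longer but shows that, after the Novikov rewrite, the pre-Lie identity alone suffices. Either way the computation goes through.
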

\begin{proof}
The operation $[-,-]$ is obviously skew-symmetric. For all
$x,y,z\in A, u,v,w\in V$, we have
\begin{eqnarray*}
&&[[x\otimes u,y\otimes v],z\otimes w]+[[y\otimes v, z\otimes w],x\otimes u]+[[z\otimes w, x\otimes u], y\otimes v]\\
&&=Q(x)\cdot Q(y)\cdot z\otimes((u\star v)\star w-(v\star u)\star w-u\star(v\star w)+v\star(u\star w))\\
&&\hspace{0.4cm}+Q(y)\cdot Q(z)\cdot x\otimes((v\star w)\star u-(w\star v)\star u-v\star(w\star u)+w\star(v\star u))\\
&&\hspace{0.4cm}+Q(z)\cdot Q(x)\cdot y\otimes((w\star u)\star v-(u\star w)\star v-w\star(u\star v)+u\star(w\star v))\\
&&\hspace{0.4cm}+x\cdot y\cdot P(Q(z))\otimes((w\star u)\star v-(w\star v)\star u)+y\cdot z\cdot P(Q(x))\otimes((u\star v)\star w-(u\star w)\star v)\\
&&\hspace{0.4cm}+ z\cdot x\cdot P(Q(y))\otimes((v\star w)\star
u-(v\star u)\star w)=0.
\end{eqnarray*}
Hence  $(A\otimes V, [-,-])$ is a Lie algebra.
\end{proof}

\begin{cor}\label{pro:from admissible Novikov to Lie}
Let $(A,\cdot)$ be a commutative associative algebra with an
admissible pair $(P,Q)$. Let $(V,\circ)$ be an admissible Novikov
algebra. Define a bilinear operation $[-,-]$ on $A\otimes V$ by
\begin{equation}\label{eq:pro:from admissible Novikov to Lie1}
[x\otimes u, y\otimes v]=(Q(x)\cdot y+2x\cdot Q(y))\otimes u\circ
v-(x\cdot Q(y)+2Q(x)\cdot y)\otimes v\circ u, \forall x,y\in A,
u,v\in V.
\end{equation}
Then $(A\otimes V, [-,-])$ is a Lie algebra.
\end{cor}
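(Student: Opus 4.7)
The plan is to deduce Corollary~\ref{pro:from admissible Novikov to Lie} directly from Proposition~\ref{pro:from Novikov to Lie} by invoking the $(-2)$-algebra correspondence between admissible Novikov algebras and Novikov algebras established in Proposition~\ref{pro:admissible Novikov algebras and Novikov algebras}. No new identity-chasing on $(A,\cdot,P,Q)$ should be required.

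First, starting from the admissible Novikov algebra $(V,\circ)$, I would form its $(-2)$-algebra $(V,\star)$ defined by
\begin{equation*}
u\star v = u\circ v - 2\, v\circ u,\qquad \forall\, u,v\in V.
\end{equation*}
By Proposition~\ref{pro:admissible Novikov algebras and Novikov algebras}, $(V,\star)$ is a Novikov algebra.

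Second, I would apply Proposition~\ref{pro:from Novikov to Lie} to the commutative associative algebra $(A,\cdot)$ equipped with the admissible pair $(P,Q)$ and this Novikov algebra $(V,\star)$. This immediately produces a Lie algebra structure $[-,-]_{\star}$ on $A\otimes V$ given by
\begin{equation*}
[x\otimes u, y\otimes v]_{\star} = Q(x)\cdot y \otimes u\star v - x\cdot Q(y)\otimes v\star u, \qquad \forall\, x,y\in A,\; u,v\in V.
\end{equation*}

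Third, I would substitute $u\star v = u\circ v - 2v\circ u$ and $v\star u = v\circ u - 2u\circ v$ and collect terms; the tensor factor on $u\circ v$ becomes $Q(x)\cdot y + 2 x\cdot Q(y)$ and the tensor factor on $v\circ u$ becomes $-(2Q(x)\cdot y + x\cdot Q(y))$, which is precisely the bracket in Eq.~(\ref{eq:pro:from admissible Novikov to Lie1}). Thus $[-,-] = [-,-]_{\star}$ and the Lie algebra axioms are inherited for free. There is no essential obstacle here since the only nontrivial inputs are the two already-proved propositions; the verification reduces to a one-line algebraic substitution matching coefficients.
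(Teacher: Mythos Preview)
Your proposal is correct and follows essentially the same approach as the paper, which simply states that the result follows from Propositions~\ref{pro:from Novikov to Lie} and~\ref{pro:admissible Novikov algebras and Novikov algebras}. Your write-up just makes the substitution step explicit.
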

\begin{proof}
It follows from Propositions~\ref{pro:from Novikov to Lie}
and~\ref{pro:admissible Novikov algebras and Novikov algebras}.
\end{proof}

\subsection{Anti-pre-Lie Poisson algebras and Novikov-Poisson algebras}

We extend the correspondence between Novikov algebras and
admissible Novikov algebras to the level of Poisson type
structures, and hence introduce the notions of anti-pre-Lie Poisson
algebras and admissible Novikov-Poisson algebras. The relationships with transposed Poisson algebras and
Novikov-Poisson algebras as well as a tensor theory are given.

\begin{defi}\label{defi:anti-pre-Lie Poisson}
An \textbf{anti-pre-Lie Poisson algebra} is a triple
$(A,\cdot,\circ)$, where $(A,\cdot)$ is a commutative
associative algebra and $(A,\circ)$ is an anti-pre-Lie algebra satisfying the following conditions:
\begin{equation}\label{eq:defi:anti-pre-Lie Poisson1}
2(x\circ y)\cdot z-2(y\circ x)\cdot z=y\cdot(x\circ z)-x\cdot(y\circ z),
\end{equation}
\begin{equation}\label{eq:defi:anti-pre-Lie Poisson2}
2x\circ(y\cdot z)=(z\cdot x)\circ y+z\cdot(x\circ y),
\end{equation}
for all $x,y,z\in A$. An \textbf{admissible Novikov-Poisson algebra} is an anti-pre-Lie
Poisson algebra $(A,\cdot,\circ)$ such that $(A,\circ)$ is
an admissible Novikov algebra.
\end{defi}

\begin{ex} Obviously, for any anti-pre-Lie algebra $(A,\circ)$,
$(A,\cdot,\circ)$ is an anti-pre-Lie Poisson algebra with $(A,
\cdot)$ being trivial. In particular, by a straightforward
computation, any 2-dimensional complex anti-pre-Lie Poisson
algebra $(A,\cdot,\circ)$ is an admissible Novikov-Poisson algebra
or the one with $(A,\cdot)$ being trivial. On the other hand, let
$A$ be a 3-dimensional vector space with a basis $\lbrace e_{1},
e_{2}, e_{3}\rbrace$. Let $\cdot,\circ:A\otimes A\rightarrow A$ be
two bilinear operations given by the following non-zero products
respectively:
$$e_{3}\cdot e_{3}=e_{3};\;\;\;\;\;
e_{1}\circ e_{2} =e_{2}, e_{3}\circ e_{3}=e_{3}.$$ Then
$(A,\cdot,\circ)$ is an anti-pre-Lie Poisson algebra, which is not
an admissible Novikov-Poisson algebra.
\end{ex}




\begin{lem} \label{pro:anti-pre-Lie Poisson}
Let $(A,\cdot)$ be a commutative associative algebra, and
$\circ:A\otimes A\rightarrow A$ be a bilinear operation. Suppose that
Eqs.~(\ref{eq:defi:anti-pre-Lie Poisson1}) and
(\ref{eq:defi:anti-pre-Lie Poisson2}) hold.
Then
the following equations hold:
\begin{eqnarray}\label{eq:defi:anti-pre-Lie Poisson3}
&&2x\circ(y\cdot z)=(y\cdot x)\circ z+y\cdot(x\circ z),\\
\label{eq:defi:anti-pre-Lie Poisson4}
&&(z\cdot x)\circ y+z\cdot(x\circ y)=(y\cdot x)\circ z+y\cdot(x\circ z),\\
\label{eq:defi:anti-pre-Lie Poisson5}
&&x\cdot(y\circ z)-y\cdot(x\circ z)=2y\circ(x\cdot z)-2x\circ(y\cdot z),\\
\label{eq:defi:anti-pre-Lie Poisson6}
&&x\cdot(y\circ z)-y\cdot(x\circ z)=2x\cdot(z\circ y)-2y\cdot(z\circ x),\\
\label{eq:defi:anti-pre-Lie Poisson9}
&&x\cdot(z\circ y)-y\cdot(z\circ x)=y\circ(x\cdot z)-x\circ(y\cdot z),\\
\label{eq:defi:anti-pre-Lie Poisson7}
&&x\circ(y\cdot z)+(y\cdot z)\circ x=y\circ(x\cdot z)+(x\cdot z)\circ y,\\
\label{eq:defi:anti-pre-Lie Poisson8}
&&z\cdot(x\circ y)-z\cdot(y\circ x)=x\circ(y\cdot z)-y\circ(x\cdot z),
\end{eqnarray}
for all $x,y,z\in A$.
\end{lem}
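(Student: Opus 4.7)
The plan is to derive all seven identities from (\ref{eq:defi:anti-pre-Lie Poisson1}), (\ref{eq:defi:anti-pre-Lie Poisson2}), and commutativity of $\cdot$ by elementary manipulations, organized around one key identity. The starting observation is to rewrite (\ref{eq:defi:anti-pre-Lie Poisson1}) as
\begin{equation*}
y\cdot(x\circ z)-x\cdot(y\circ z)=2[x,y]\cdot z\tag{$\dagger$}
\end{equation*}
and to notice that (\ref{eq:defi:anti-pre-Lie Poisson3}) is obtained from (\ref{eq:defi:anti-pre-Lie Poisson2}) just by swapping $y\leftrightarrow z$ and using $y\cdot z=z\cdot y$, $z\cdot x=x\cdot z$. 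Equation (\ref{eq:defi:anti-pre-Lie Poisson4}) then follows immediately by equating the two expressions for $2x\circ(y\cdot z)$ supplied by (\ref{eq:defi:anti-pre-Lie Poisson2}) and (\ref{eq:defi:anti-pre-Lie Poisson3}).

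Next, applying (\ref{eq:defi:anti-pre-Lie Poisson3}) with $x$ and $y$ interchanged and subtracting from (\ref{eq:defi:anti-pre-Lie Poisson3}) cancels the two $(y\cdot x)\circ z=(x\cdot y)\circ z$ terms and leaves exactly (\ref{eq:defi:anti-pre-Lie Poisson5}). Combining (\ref{eq:defi:anti-pre-Lie Poisson5}) with $(\dagger)$ (and using $z\cdot[x,y]=[x,y]\cdot z$) produces the key identity
\begin{equation*}
x\circ(y\cdot z)-y\circ(x\cdot z)=z\cdot[x,y],\tag{$\star$}
\end{equation*}
which is literally (\ref{eq:defi:anti-pre-Lie Poisson8}), since its left-hand side equals $z\cdot(x\circ y)-z\cdot(y\circ x)$.

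For (\ref{eq:defi:anti-pre-Lie Poisson7}), I would use (\ref{eq:defi:anti-pre-Lie Poisson2}) and its image under $x\leftrightarrow y$ to compute
\begin{equation*}
(x\cdot z)\circ y-(y\cdot z)\circ x=2\bigl(x\circ(y\cdot z)-y\circ(x\cdot z)\bigr)-z\cdot[x,y],
\end{equation*}
which by $(\star)$ equals $z\cdot[x,y]$; combined once more with $(\star)$ this is (\ref{eq:defi:anti-pre-Lie Poisson7}). For (\ref{eq:defi:anti-pre-Lie Poisson6}), apply (\ref{eq:defi:anti-pre-Lie Poisson3}) with the cyclic substitutions $(x,y,z)\to(z,x,y)$ and $(z,y,x)$ and subtract to get $x\cdot(z\circ y)-y\cdot(z\circ x)=(y\cdot z)\circ x-(x\cdot z)\circ y$; the right-hand side has just been shown in the proof of (\ref{eq:defi:anti-pre-Lie Poisson7}) to equal $-z\cdot[x,y]$, while by $(\dagger)$ we have $x\cdot(y\circ z)-y\cdot(x\circ z)=-2[x,y]\cdot z$, giving (\ref{eq:defi:anti-pre-Lie Poisson6}). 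Finally, (\ref{eq:defi:anti-pre-Lie Poisson9}) drops out by equating the right-hand sides of (\ref{eq:defi:anti-pre-Lie Poisson5}) and (\ref{eq:defi:anti-pre-Lie Poisson6}).

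The main obstacle is not conceptual but combinatorial bookkeeping: all seven identities are essentially manifestations of one Leibniz-type relation between $\cdot$ and $\circ$, and the work consists in organizing the six-fold permutations of $x,y,z$ efficiently. The most delicate single step is (\ref{eq:defi:anti-pre-Lie Poisson6}), because it requires recognizing that the expression $(y\cdot z)\circ x-(x\cdot z)\circ y$ computed in the proof of (\ref{eq:defi:anti-pre-Lie Poisson7}) reappears after the cyclic rewriting via (\ref{eq:defi:anti-pre-Lie Poisson3}); once the key identity $(\star)$ is isolated, however, every remaining derivation collapses to one or two lines.
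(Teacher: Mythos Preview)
Your argument is correct and proceeds by the same elementary manipulations as the paper: both derive (\ref{eq:defi:anti-pre-Lie Poisson3}) from (\ref{eq:defi:anti-pre-Lie Poisson2}) by the $y\leftrightarrow z$ swap, deduce (\ref{eq:defi:anti-pre-Lie Poisson4}) from (\ref{eq:defi:anti-pre-Lie Poisson2})+(\ref{eq:defi:anti-pre-Lie Poisson3}), and obtain (\ref{eq:defi:anti-pre-Lie Poisson5}) by antisymmetrizing (\ref{eq:defi:anti-pre-Lie Poisson3}) in $x,y$.

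The one organizational difference is the order in which the remaining four identities are established. The paper follows the sequence (\ref{eq:defi:anti-pre-Lie Poisson6})$\to$(\ref{eq:defi:anti-pre-Lie Poisson9})$\to$(\ref{eq:defi:anti-pre-Lie Poisson7})$\to$(\ref{eq:defi:anti-pre-Lie Poisson8}), obtaining (\ref{eq:defi:anti-pre-Lie Poisson6}) by applying (\ref{eq:defi:anti-pre-Lie Poisson1}) twice in a short telescoping computation and deriving (\ref{eq:defi:anti-pre-Lie Poisson8}) last via (\ref{eq:defi:anti-pre-Lie Poisson3}) and (\ref{eq:defi:anti-pre-Lie Poisson7}). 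You instead isolate (\ref{eq:defi:anti-pre-Lie Poisson8}) first as the pivot identity~$(\star)$ (from (\ref{eq:defi:anti-pre-Lie Poisson5}) and (\ref{eq:defi:anti-pre-Lie Poisson1})) and then read off (\ref{eq:defi:anti-pre-Lie Poisson7}), (\ref{eq:defi:anti-pre-Lie Poisson6}), (\ref{eq:defi:anti-pre-Lie Poisson9}) from it. Your route makes the common ``Leibniz-type'' content of all the identities more transparent and shortens the verification of (\ref{eq:defi:anti-pre-Lie Poisson7}); the paper's route for (\ref{eq:defi:anti-pre-Lie Poisson6}) avoids the detour through (\ref{eq:defi:anti-pre-Lie Poisson7}). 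Either ordering is fine.
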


\begin{proof}
Let $x,y,z\in A$. Then we have
\begin{enumerate}
\item Eq.~(\ref{eq:defi:anti-pre-Lie Poisson3}) holds by exchanging $y$ and $z$ in Eq.~(\ref{eq:defi:anti-pre-Lie Poisson2}).

\item Eq.~(\ref{eq:defi:anti-pre-Lie Poisson4}) follows from Eqs.~(\ref{eq:defi:anti-pre-Lie Poisson2}) and (\ref{eq:defi:anti-pre-Lie Poisson3}).

\item Eq.~(\ref{eq:defi:anti-pre-Lie Poisson5}) follows from Eq.~(\ref{eq:defi:anti-pre-Lie Poisson3}).

\item Eq.~(\ref{eq:defi:anti-pre-Lie Poisson6}) holds since
\begin{eqnarray*}
&&y\cdot(x\circ z)-x\cdot(y\circ z)\overset{(\ref{eq:defi:anti-pre-Lie Poisson1})}{=}2(x\circ y)\cdot z-2(y\circ x)\cdot z\\
&&\overset{(\ref{eq:defi:anti-pre-Lie Poisson1})}{=}4y\cdot(x\circ z)-4y\cdot(z\circ x)+2x\cdot(z\circ y)-4x\cdot(y\circ z)+4x\cdot(z\circ y)-2y\cdot(z\circ x)\\
&&=4y\cdot(x\circ z)-4x\cdot(y\circ z)-6y\cdot(z\circ x)+6x\cdot(z\circ y).
\end{eqnarray*}

\item
Eq.~(\ref{eq:defi:anti-pre-Lie Poisson9}) follows from
Eqs.~(\ref{eq:defi:anti-pre-Lie Poisson5}) and
(\ref{eq:defi:anti-pre-Lie Poisson6}).

\item Eq.~(\ref{eq:defi:anti-pre-Lie
Poisson7}) holds  since
$$(y\cdot z)\circ x-(z\cdot x)\circ y\overset{(\ref{eq:defi:anti-pre-Lie Poisson4})}{=}x\cdot(z\circ y)-y\cdot(z\circ x)
\overset{(\ref{eq:defi:anti-pre-Lie Poisson9})}{=}y\circ(x\cdot
z)-x\circ(y\cdot z). $$
\item Eq.~(\ref{eq:defi:anti-pre-Lie Poisson8}) holds since
\begin{eqnarray*}
&&x\circ(y\cdot z)-y\circ(z\cdot x)-z\cdot(x\circ y)+z\cdot(y\circ x)\\
&&\overset{(\ref{eq:defi:anti-pre-Lie Poisson3})}{=}x\circ(y\cdot z)-y\circ(z\cdot x)+(z\cdot x)\circ y-2x\circ(z\cdot y)-(z\cdot y)\circ x+2y\circ(z\cdot x)\\
&&=y\circ(z\cdot x)+(z\cdot x)\circ y-x\circ(y\cdot z)-(z\cdot y)\circ x\overset{(\ref{eq:defi:anti-pre-Lie Poisson7})}{=}0.
\end{eqnarray*}
\end{enumerate}
Thus the conclusion holds.
\end{proof}

\begin{defi}
(\cite{Xu1997})\label{defi:Novikov Poisson algebra}
A \textbf{Novikov-Poisson algebra} is a triple $(A,\cdot,\star)$, where $(A,\cdot)$ is a commutative associative algebra and $(A,\star)$ is a Novikov algebra satisfying the following conditions:
\begin{equation}\label{eq:defi:Novikov Poisson algebra1}
(x\cdot y)\star z=x\cdot(y\star z),
\end{equation}
\begin{equation}\label{eq:defi:Novikov Poisson algebra2}
(x\star y)\cdot z-(y\star x)\cdot z=x\star(y\cdot z)-y\star(x\cdot z),
\end{equation}
for all $x,y,z\in A$.
\end{defi}

The following conclusion extends the correspondence between
Novikov algebras and admissible Novikov algebras to the level of
Poisson type structures.



\begin{pro}\label{pro:from admissible Novikov Poisson to Novikov Poisson}
Let $(A,\cdot)$ be a commutative associative algebra and $\circ:A\otimes A\rightarrow A$ be a bilinear operation.
Let $(A,\star)$ be the
$(-2)$-algebra of $(A,\circ)$. Then
Eqs.~(\ref{eq:defi:anti-pre-Lie Poisson1}) and
(\ref{eq:defi:anti-pre-Lie Poisson2}) hold if and only if
Eqs.~(\ref{eq:defi:Novikov Poisson algebra1}) and
(\ref{eq:defi:Novikov Poisson algebra2}) hold. In particular,
$(A,\cdot,\circ)$ is an admissible Novikov-Poisson algebra if and only if
$(A,\cdot,\star)$ is a Novikov-Poisson algebra.
\end{pro}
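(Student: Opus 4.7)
The strategy is to reduce the equivalence to a direct identity between the two compatibility systems after substituting $x\star y=x\circ y-2y\circ x$. Indeed, carrying out the substitution, axiom (\ref{eq:defi:Novikov Poisson algebra1}) becomes
\[
(x\cdot y)\circ z-2z\circ(x\cdot y)=x\cdot(y\circ z)-2x\cdot(z\circ y),\qquad (\mathrm{I})
\]
while axiom (\ref{eq:defi:Novikov Poisson algebra2}) collapses (on the LHS) to $3(x\circ y)\cdot z-3(y\circ x)\cdot z$, and becomes
\[
3(x\circ y)\cdot z-3(y\circ x)\cdot z=x\circ(y\cdot z)-y\circ(x\cdot z)+2(x\cdot z)\circ y-2(y\cdot z)\circ x.\qquad (\mathrm{II})
\]
So the theorem reduces to the claim that the system $\{(\mathrm{I}),(\mathrm{II})\}$ is equivalent to the system $\{(\ref{eq:defi:anti-pre-Lie Poisson1}),(\ref{eq:defi:anti-pre-Lie Poisson2})\}$.

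For the forward direction, I would assume (\ref{eq:defi:anti-pre-Lie Poisson1}) and (\ref{eq:defi:anti-pre-Lie Poisson2}) and exploit the full catalogue of auxiliary identities (\ref{eq:defi:anti-pre-Lie Poisson3})–(\ref{eq:defi:anti-pre-Lie Poisson8}) furnished by Lemma \ref{pro:anti-pre-Lie Poisson}. To derive (I), I would rewrite $(x\cdot y)\circ z$ via (\ref{eq:defi:anti-pre-Lie Poisson3}) (permuting variables so the LHS of (\ref{eq:defi:anti-pre-Lie Poisson3}) produces the desired term), and $2z\circ(x\cdot y)$ via (\ref{eq:defi:anti-pre-Lie Poisson2}) (again with an appropriate relabeling), then collapse the resulting expression using (\ref{eq:defi:anti-pre-Lie Poisson8}) together with commutativity of $\cdot$. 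To derive (II), I would observe that its LHS equals $\tfrac{3}{2}$ times the LHS of (\ref{eq:defi:anti-pre-Lie Poisson1}), so the question becomes matching $\tfrac{3}{2}\bigl(y\cdot(x\circ z)-x\cdot(y\circ z)\bigr)$ against the RHS of (II); this is handled by successive applications of (\ref{eq:defi:anti-pre-Lie Poisson5}), (\ref{eq:defi:anti-pre-Lie Poisson6}), and (\ref{eq:defi:anti-pre-Lie Poisson9}).

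For the converse, I would start from (I) and (II) and first extract (\ref{eq:defi:anti-pre-Lie Poisson6}) almost for free: swapping $x\leftrightarrow y$ in (I) and using $x\cdot y=y\cdot x$ makes the left-hand sides cancel, leaving precisely (\ref{eq:defi:anti-pre-Lie Poisson6}). Combining (II) with (II) after $x\leftrightarrow y$ gives the antisymmetric part, and combining that with (I) and its swap (and commutativity of $\cdot$) yields an expression for $3(x\circ y)\cdot z-3(y\circ x)\cdot z$ solely in terms of products $a\cdot(b\circ c)$, from which (\ref{eq:defi:anti-pre-Lie Poisson1}) falls out. To get (\ref{eq:defi:anti-pre-Lie Poisson2}), I would add (II) and its $x\leftrightarrow z$ swap, feed in (I) twice to eliminate the $(a\cdot b)\circ c$ terms in favour of $\,\circ(a\cdot b)$ terms, and then recognize the resulting identity as (\ref{eq:defi:anti-pre-Lie Poisson2}) after using (\ref{eq:defi:anti-pre-Lie Poisson1}) already established.

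The final sentence of the theorem then follows at once: by Proposition \ref{pro:admissible Novikov algebras and Novikov algebras}, $(A,\circ)$ is an admissible Novikov algebra iff its $(-2)$-algebra $(A,\star)$ is a Novikov algebra, and by what was just proved the compatibilities between $\cdot$ and $\circ$ on one side match precisely the compatibilities between $\cdot$ and $\star$ on the other. The main obstacle I anticipate is the converse: unlike the forward direction, we do not have Lemma \ref{pro:anti-pre-Lie Poisson} available, so one must be careful to extract both (\ref{eq:defi:anti-pre-Lie Poisson1}) and (\ref{eq:defi:anti-pre-Lie Poisson2}) using only (I), (II), their permutations in the three arguments, and the commutativity of $\cdot$. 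The bookkeeping of which permutation to apply when is the only genuinely delicate step.
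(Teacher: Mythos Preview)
Your approach is correct and, for the forward direction, essentially identical to the paper's: both substitute $x\star y = x\circ y - 2y\circ x$ and then reduce using the auxiliary identities of Lemma~\ref{pro:anti-pre-Lie Poisson} (the paper uses (\ref{eq:defi:anti-pre-Lie Poisson2}), (\ref{eq:defi:anti-pre-Lie Poisson1}), (\ref{eq:defi:anti-pre-Lie Poisson3}), (\ref{eq:defi:anti-pre-Lie Poisson5}), (\ref{eq:defi:anti-pre-Lie Poisson7}); your selection differs slightly but is equivalent).

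For the converse you take a genuinely different route. The paper's intended argument (only sketched as ``a similar argument'') is to invert the substitution---writing $x\circ y = -\tfrac{1}{3}(x\star y + 2\,y\star x)$---and then verify (\ref{eq:defi:anti-pre-Lie Poisson1}) and (\ref{eq:defi:anti-pre-Lie Poisson2}) directly from (\ref{eq:defi:Novikov Poisson algebra1}) and (\ref{eq:defi:Novikov Poisson algebra2}); this is a two-line computation per axiom and needs no permutation bookkeeping. Your plan of staying in the $\circ$-language and extracting (\ref{eq:defi:anti-pre-Lie Poisson1}), (\ref{eq:defi:anti-pre-Lie Poisson2}) from $(\mathrm{I})$, $(\mathrm{II})$ and their variable permutations is valid, but it is precisely the ``delicate bookkeeping'' you flag that the back-substitution avoids. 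So the obstacle you anticipate is real for your route but evaporates if you switch to the paper's.
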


\begin{proof}
{Suppose that Eqs.~(\ref{eq:defi:anti-pre-Lie Poisson1}) and
    (\ref{eq:defi:anti-pre-Lie Poisson2}) hold. Then for all $x,y,z\in A$, we have}
\begin{eqnarray*}
&&(x\cdot y)\star z-x\cdot(y\star z)\\&&=(x\cdot y)\circ z-2z\circ(x\cdot y)-x\cdot(y\circ z)+2x\cdot(z\circ y)\\
&&\overset{(\ref{eq:defi:anti-pre-Lie Poisson2})}{=}
2y\circ(z\cdot x)-2z\circ(x\cdot y)-2x\cdot(y\circ z)+2x\cdot(z\circ y)\\
&&\overset{(\ref{eq:defi:anti-pre-Lie Poisson1})}{=}
2y\circ(z\cdot x)-2z\circ(x\cdot y)+y\cdot(z\circ x)-z\cdot(y\circ x)\\
&&\overset{(\ref{eq:defi:anti-pre-Lie Poisson3})}{=}
(z\cdot y)\circ x+z\cdot(y\circ x)-(y\cdot z)\circ x-y\cdot(z\circ x)+y\cdot(z\circ x)-z\cdot(y\circ x)=0,\\
&&2(x\star y)\cdot z-2(y\star x)\cdot z+2y\star(x\cdot z)-2x\star(y\cdot z)\\
&&=6(x\circ y)\cdot z-6(y\circ x)\cdot z+2y\circ(x\cdot z)-4(x\cdot z)\circ y-2x\circ(y\cdot z)+4(y\cdot z)\circ x\\
&&\overset{(\ref{eq:defi:anti-pre-Lie
Poisson1}),(\ref{eq:defi:anti-pre-Lie Poisson3})}{=}
3y\cdot(x\circ z)-3x\cdot(y\circ z)+(x\cdot y)\circ z+x\cdot(y\circ z)-4(x\cdot z)\circ y-(y\cdot x)\circ z\\
&&\hspace{0.4cm}-y\cdot(x\circ z)+4(y\cdot z)\circ x\\
&&=2y\cdot(x\circ z)-2x\cdot(y\circ z)+4(y\cdot z)\circ x-4(x\cdot z)\circ y\\
&&\overset{(\ref{eq:defi:anti-pre-Lie Poisson5})}{=}
4x\circ (y\cdot z)-4y\circ(x\cdot z)+4(y\cdot z)\circ x-4(x\cdot z)\circ y\overset{(\ref{eq:defi:anti-pre-Lie Poisson7})}{=}0.
\end{eqnarray*}
Hence Eqs.~(\ref{eq:defi:Novikov Poisson algebra1}) and
(\ref{eq:defi:Novikov Poisson algebra2}) hold. Conversely, suppose
that Eqs.~(\ref{eq:defi:Novikov Poisson algebra1}) and
(\ref{eq:defi:Novikov Poisson algebra2}) hold. Then a similar
argument gives Eqs.~(\ref{eq:defi:anti-pre-Lie Poisson1}) and
(\ref{eq:defi:anti-pre-Lie Poisson2}). \delete{
``$\Longleftarrow$". Let $x,y,z\in A$. We have
\begin{eqnarray*}
&&2(x\circ y)\cdot z-2(y\circ x)\cdot z+x\cdot(y\circ z)-y\cdot(x\circ z)\\
&&=-\frac{1}{3}(2(x\star y)\cdot z+4(y\star x)\cdot z-2(y\star x)\cdot z-4(x\star y)\cdot z+x\cdot(y\star z)+2x\cdot(z\star y)\\
&&\hspace{0.4cm}-y\cdot(x\star z)-2y\cdot(z\star x))\\
&&\overset{(\ref{eq:defi:Novikov Poisson algebra1})}{=}
-\frac{1}{3}(2(y\star x)\cdot z-2(x\star y)\cdot z+2x\cdot(z\star y)-2y\cdot(z\star x))\\
&&\overset{(\ref{eq:defi:Novikov Poisson algebra1})}{=}
-\frac{1}{3}(2(y\star x)\cdot z-2(x\star y)\cdot z+2z\cdot(x\star y)-2z\cdot(y\star x))=0,\\
&&2x\circ(y\cdot z)-(z\cdot x)\circ y-z\cdot(x\circ y)\\
&&=-\frac{1}{3}(2x\star(y\cdot z)+4(y\cdot z)\star x-(z\cdot x)\star y-2y\star(z\cdot x)-z\cdot(x\star y)-2z\cdot(y\star x))\\
&&\overset{(\ref{eq:defi:Novikov Poisson algebra1})}{=}
-\frac{1}{3}(2x\star(y\cdot z)+2z\cdot(y\star x)-2z\cdot(x\star y)-2y\star(z\cdot x))\overset{(\ref{eq:defi:Novikov Poisson algebra2})}{=}0.
\end{eqnarray*}
Hence Eqs.~(\ref{eq:defi:anti-pre-Lie Poisson1}) and
(\ref{eq:defi:anti-pre-Lie Poisson2}) hold.}
\end{proof}



\begin{pro}
Let $(P,Q)$ be an admissible pair on a commutative associative
algebra $(A,\cdot)$.
\begin{enumerate}
\item \label{it:1} Let $(A,\star)$ be the Novikov algebra given
by Eq.~(\ref{eq:ex:Novikov algebra from admissible pair}) in
Proposition \ref{ex:Novikov algebra from admissible pair}. Then
$(A,\cdot,\star)$ is a Novikov-Poisson algebra.
\item \label{it:2} Let $(A,\circ)$ be the admissible Novikov
algebra given by Eq.~(\ref{eq:cor:admissible Novikov algebra from
admissible pair}) in Corollary \ref{cor:admissible Novikov algebra
from admissible pair}. Then $(A,\cdot,\circ)$ is an admissible
Novikov-Poisson algebra.
\end{enumerate}
\end{pro}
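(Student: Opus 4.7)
The proof splits naturally into the two parts.

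For Part~(\ref{it:1}), the Novikov algebra structure on $(A,\star)$ has already been established in Proposition~\ref{ex:Novikov algebra from admissible pair}, so only the two Novikov-Poisson compatibility conditions need to be checked. Equation~\eqref{eq:defi:Novikov Poisson algebra1} is immediate from the associativity of $(A,\cdot)$, since
\[
(x\cdot y)\star z=(x\cdot y)\cdot Q(z)=x\cdot(y\cdot Q(z))=x\cdot(y\star z).
\]
For Eq.~\eqref{eq:defi:Novikov Poisson algebra2}, I would expand $x\star(y\cdot z)-y\star(x\cdot z)=x\cdot Q(y\cdot z)-y\cdot Q(x\cdot z)$ using the admissibility condition \eqref{eq:defi:admissible pair}; the two terms involving $P(z)$ then cancel because $x\cdot y\cdot P(z)=y\cdot x\cdot P(z)$, leaving exactly $x\cdot Q(y)\cdot z-y\cdot Q(x)\cdot z=(x\star y)\cdot z-(y\star x)\cdot z$.

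For Part~(\ref{it:2}), rather than verifying Eqs.~\eqref{eq:defi:anti-pre-Lie Poisson1} and \eqref{eq:defi:anti-pre-Lie Poisson2} directly, I would deduce it from Part~(\ref{it:1}) via Proposition~\ref{pro:from admissible Novikov Poisson to Novikov Poisson}. Let $\star'$ denote the $(-2)$-algebra of $(A,\circ)$ in the sense of Eq.~\eqref{eq:pro:admissible Novikov algebras and Novikov algebras2}. A short calculation exploiting the commutativity of $\cdot$ gives
\[
x\star' y=x\circ y-2\,y\circ x=\bigl(x\cdot Q(y)+2Q(x)\cdot y\bigr)-2\bigl(y\cdot Q(x)+2Q(y)\cdot x\bigr)=-3\,x\cdot Q(y)=-3\,x\star y.
\]
Since the two Novikov-Poisson axioms~\eqref{eq:defi:Novikov Poisson algebra1} and \eqref{eq:defi:Novikov Poisson algebra2} are linear in the Novikov operation, and the Novikov-defining identities \eqref{eq:defi:pre-Lie algebras} and \eqref{eq:defi:Novikov algebras1} are homogeneous of degree two in it, the rescaling $\star\rightsquigarrow -3\star$ preserves all four conditions. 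Hence $(A,\cdot,\star')$ is again a Novikov-Poisson algebra, and Proposition~\ref{pro:from admissible Novikov Poisson to Novikov Poisson} then yields that $(A,\cdot,\circ)$ is an admissible Novikov-Poisson algebra.

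The proof is essentially a matter of bookkeeping: the only genuine work lies in the admissibility-plus-commutativity expansion for Eq.~\eqref{eq:defi:Novikov Poisson algebra2} in Part~(\ref{it:1}) and the careful sign tracking in the $(-2)$-algebra identification in Part~(\ref{it:2}). No new identity is needed beyond Eq.~\eqref{eq:defi:admissible pair} together with the commutativity and associativity of $(A,\cdot)$, and the potential pitfall—having to re-verify the anti-pre-Lie Poisson axioms from scratch—is avoided by routing Part~(\ref{it:2}) through Proposition~\ref{pro:from admissible Novikov Poisson to Novikov Poisson}.
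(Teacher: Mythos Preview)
Your proof is correct and follows essentially the same route as the paper: Part~(\ref{it:1}) is verified by the identical direct computation, and Part~(\ref{it:2}) is deduced from Part~(\ref{it:1}) via Proposition~\ref{pro:from admissible Novikov Poisson to Novikov Poisson}. Your Part~(\ref{it:2}) is actually more careful than the paper's one-line justification, since you make explicit the scaling $\star'=-3\star$ and the homogeneity argument that the paper leaves to the reader.
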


\begin{proof}
(\ref{it:1}). Let $x,y,z\in A$. Then we have
\begin{eqnarray*}
(x\cdot y)\star z=x\cdot y\cdot Q(z)&=&x\cdot (y\star z), \\
x\star (y\cdot z)-y\star(x\cdot z)&=&x\cdot Q(y\cdot z)-y\cdot Q(x\cdot z)=x\cdot Q(y)\cdot z-y\cdot Q(x)\cdot z\\&=&(x\star y)\cdot z-(y\star x)\cdot z.
\end{eqnarray*}
Hence Eqs.~(\ref{eq:defi:Novikov Poisson algebra1}) and
(\ref{eq:defi:Novikov Poisson algebra2}) hold.

(\ref{it:2}). It follows from Item~(\ref{it:1}) and Proposition \ref{pro:from admissible Novikov Poisson to Novikov Poisson}.
\end{proof}

\begin{ex}
Let $P$ be a derivation on a commutative associative algebra
$(A,\cdot)$. Define two bilinear operations $\star,\circ: A\otimes
A\rightarrow A$ respectively by
\begin{equation*}
x\star y=x\cdot P(y)+a\cdot x\cdot y,\;\; \forall x,y\in A,
\end{equation*}
\begin{equation*}
x\circ y=x\cdot P(y)+2P(x)\cdot y+a\cdot x\cdot y,\;\;\forall
x,y\in A, \end{equation*} where $a\in {\mathbb F}$ or $a\in A$.
Then $(A,\cdot,\star)$ is a Novikov-Poisson algebra (\cite{Xu1997}) and
$(A,\cdot,\circ)$ is an admissible Novikov-Poisson algebra.
\end{ex}


Next we consider the structure of the sub-adjacent Lie algebras of the anti-pre-Lie algebras in anti-pre-Lie Poisson algebras.

\begin{defi}(\cite{Bai2020})\label{defi:transposed Poisson algebra}
A \textbf{transposed Poisson algebra} is a triple
$(A,\cdot,[-,-])$, where $(A,\cdot)$ is a commutative associative
algebra, and $(A,[-,-])$ is a Lie algebra satisfying the following
equation:
\begin{equation}\label{eq:defi:transposed Poisson algebra}
2z\cdot[x,y]=[z\cdot x,y]+[x,z\cdot y], \forall x,y,z\in A.
\end{equation}
\end{defi}

\begin{ex}\label{pro:from Novikov Poisson algebra to transposed Poisson algebra}
Let $(A,\cdot,\star)$ be a Novikov-Poisson algebra and $(A,[-,-])$ be the sub-adjacent Lie algebra of $(A,\star)$. Then $(A,\cdot,[-,-])$ is a transposed Poisson algebra (\cite{Bai2020}).
\end{ex}

\begin{pro}\label{pro:sub-adjacent transposed Poisson algebra}
Let $(A,\cdot)$ be a commutative associative algebra and
$(A,\circ)$ be a Lie-admissible algebra such that
Eqs.~(\ref{eq:defi:anti-pre-Lie Poisson1}) and
(\ref{eq:defi:anti-pre-Lie Poisson2}) hold.
Suppose that $(A,[-,-])$ is the
 sub-adjacent
Lie algebra of $(A,\circ)$.
Then $(A,\cdot,[-,-])$ is a transposed Poisson algebra. 
In particular, if $(A,\cdot,\circ)$ is an anti-pre-Lie Poisson
algebra, then $(A,\cdot,[-,-])$ is a transposed Poisson algebra.
\end{pro}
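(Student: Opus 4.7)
The plan is to verify the transposed Poisson identity
\begin{equation*}
2z\cdot[x,y]=[z\cdot x,y]+[x,z\cdot y]
\end{equation*}
directly by expanding both sides in terms of $\circ$ and applying the derived identities from Lemma~\ref{pro:anti-pre-Lie Poisson}. The Lie algebra axioms on $(A,[-,-])$ are already granted by Lie-admissibility, and the commutative associative axioms on $(A,\cdot)$ are part of the hypothesis, so only the compatibility Eq.~(\ref{eq:defi:transposed Poisson algebra}) remains to be checked.

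First, I would expand
$2z\cdot[x,y]=2z\cdot(x\circ y)-2z\cdot(y\circ x)$
and apply Eq.~(\ref{eq:defi:anti-pre-Lie Poisson8}) (which gives $z\cdot(x\circ y)-z\cdot(y\circ x)=x\circ(y\cdot z)-y\circ(x\cdot z)$) to rewrite the left-hand side as $2x\circ(y\cdot z)-2y\circ(x\cdot z)$.

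Next, I would expand the right-hand side as
\begin{equation*}
(z\cdot x)\circ y-y\circ(z\cdot x)+x\circ(z\cdot y)-(z\cdot y)\circ x,
\end{equation*}
and use Eq.~(\ref{eq:defi:anti-pre-Lie Poisson2}) in the forms
$2x\circ(y\cdot z)=(z\cdot x)\circ y+z\cdot(x\circ y)$ and $2y\circ(x\cdot z)=(z\cdot y)\circ x+z\cdot(y\circ x)$ to substitute for $(z\cdot x)\circ y$ and $(z\cdot y)\circ x$. A second application of Eq.~(\ref{eq:defi:anti-pre-Lie Poisson8}) handles the resulting $z\cdot(x\circ y)-z\cdot(y\circ x)$ terms, and commutativity of $\cdot$ (i.e.\ $y\cdot z=z\cdot y$ and $x\cdot z=z\cdot x$) cancels the remaining terms so that both sides agree.

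The main (minor) obstacle is picking the right combination of the identities (\ref{eq:defi:anti-pre-Lie Poisson2})--(\ref{eq:defi:anti-pre-Lie Poisson8}), since several of them could plausibly be used; once Eq.~(\ref{eq:defi:anti-pre-Lie Poisson8}) is identified as the key tool to move a factor of $z$ from $\cdot$-position to $\circ$-position, the computation is essentially forced. For the final ``in particular'' clause, I simply note that any anti-pre-Lie algebra is Lie-admissible by Proposition~\ref{pro:anti-pre-Lie is Lie-admissible}, so an anti-pre-Lie Poisson algebra satisfies all the hypotheses of the first assertion, and the conclusion follows immediately.
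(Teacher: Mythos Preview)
Your proposal is correct and follows essentially the same approach as the paper: both proofs expand the transposed Poisson identity in terms of $\circ$ and reduce it using Eqs.~(\ref{eq:defi:anti-pre-Lie Poisson2}) and (\ref{eq:defi:anti-pre-Lie Poisson8}) from Lemma~\ref{pro:anti-pre-Lie Poisson}. The only cosmetic difference is the order of application --- the paper substitutes with Eq.~(\ref{eq:defi:anti-pre-Lie Poisson2}) first and collects everything into $3$ times the expression that vanishes by Eq.~(\ref{eq:defi:anti-pre-Lie Poisson8}), whereas you invoke Eq.~(\ref{eq:defi:anti-pre-Lie Poisson8}) twice, once on each side.
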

\begin{proof}
Let $x,y,z\in A$. Then we have
\begin{eqnarray*}
&&[z\cdot x,y]+[x,z\cdot y]-2z\cdot[x,y]\\
&&=(z\cdot x)\circ y-y\circ(z\cdot x)+x\circ(z\cdot y)-(z\cdot y)\circ x-2z\cdot[x,y]\\
&&\overset{(\ref{eq:defi:anti-pre-Lie Poisson2})}{=}2x\circ(y\cdot z)-z\cdot(x\circ y)-y\circ (z\cdot x)+x\circ(z\cdot y)-2y\circ(x\cdot z)+z\cdot(y\circ x)\\
&&\hspace{0.4cm}-2z\cdot(x\circ y)+2z\cdot(y\circ x)\\
&&=3(x\circ(y\cdot z)-y\circ (z\cdot x)-z\cdot(x\circ y)+z\cdot(y\circ x))\overset{(\ref{eq:defi:anti-pre-Lie Poisson8})}{=}0.
\end{eqnarray*}
Thus $(A,\cdot,[-,-])$ is a transposed Poisson algebra.
\end{proof}

\begin{ex}
Let $(P,Q)$ be an admissible pair on a commutative associative
algebra $(A,\cdot)$. Define a Lie algebra $(A,[-,-])$  by
Eq.~(\ref{eq:ex:commutative 2-cocycle}). Then $(A,\cdot,[-,-])$ is
a transposed Poisson algebra. Note that by Proposition~\ref{ex:commutative 2-cocycle}, if there is a
symmetric invariant bilinear form $\mathcal{B}$ on $(A,\cdot)$,
then $\mathcal{B}$ is also a commutative 2-cocycle on the Lie
algebra $(A,[-,-])$.
\end{ex}

Conversely, in the nondegenerate case, we have the following conclusion.

\begin{pro}\label{thm:bilinear form on transposed Poisson algebra}
Let $(A,\cdot,[-,-])$ be a transposed Poisson algebra. Suppose
there is a nondegenerate symmetric bilinear form $\mathcal{B}$ on
$A$, such that $\mathcal{B}$ is invariant on $(A,\cdot)$ and a
commutative 2-cocycle on $(A,[-,-])$. Then  $(A,\cdot,\circ)$ is
an anti-pre-Lie Poisson algebra, where the bilinear operation $\circ$
is given by Eq.~(\ref{eq:thm:commutative 2-cocycles and
anti-pre-Lie algebras}).
\end{pro}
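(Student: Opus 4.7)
The plan is to first invoke Theorem~\ref{thm:commutative 2-cocycles and anti-pre-Lie algebras} to obtain that $(A,\circ)$ is an anti-pre-Lie algebra whose sub-adjacent Lie algebra coincides with $(A,[-,-])$. It then remains to verify the two compatibility axioms~\eqref{eq:defi:anti-pre-Lie Poisson1} and~\eqref{eq:defi:anti-pre-Lie Poisson2} from Definition~\ref{defi:anti-pre-Lie Poisson}. Since $\mathcal B$ is nondegenerate, each identity can be tested by pairing its two sides with an arbitrary $w\in A$ and reducing to a scalar equality. The available toolkit consists of the defining relation $\mathcal B(x\circ y, z)=\mathcal B(y,[x,z])$, the associative invariance $\mathcal B(x\cdot y, z)=\mathcal B(x, y\cdot z)$, the commutative 2-cocycle identity for $(A,[-,-])$, and the transposed Poisson relation $2z\cdot[x,y]=[z\cdot x, y]+[x, z\cdot y]$.

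For \eqref{eq:defi:anti-pre-Lie Poisson2}, the pairing with $w$ should turn the two sides of $2x\circ(y\cdot z)=(z\cdot x)\circ y+z\cdot(x\circ y)$ into $\mathcal B(y, 2z\cdot[x,w])$ and $\mathcal B(y, [z\cdot x, w]+[x, z\cdot w])$ respectively, which match by a single application of the transposed Poisson identity followed by nondegeneracy in $y$. I expect this step to be essentially routine.

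The main obstacle is \eqref{eq:defi:anti-pre-Lie Poisson1}. After pairing with $w$ and applying the defining relation for $\circ$ together with associative invariance, the identity reduces to the scalar statement
\[
\mathcal B([x,y], z\cdot w)=\mathcal B([w,y], z\cdot x)+\mathcal B([x,w], z\cdot y),
\]
which does not seem to be a direct consequence of either the 2-cocycle property or transposed Poisson in isolation. The plan is to first rewrite $2\mathcal B([x,y], z\cdot w)$ via associative invariance and then transposed Poisson as $\mathcal B([z\cdot x, y]+[x, z\cdot y], w)$, then apply the commutative 2-cocycle identity cyclically to each of the two resulting triples $(z\cdot x, y, w)$ and $(x, z\cdot y, w)$. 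One more use of transposed Poisson on the inner brackets $[z\cdot x, w]$ and $[z\cdot y, w]$ produces the target right-hand side together with cross terms of the form $\mathcal B([x, z\cdot w], y)$ and $\mathcal B([z\cdot w, y], x)$. A final application of the 2-cocycle identity on the triple $(x, y, z\cdot w)$ should convert those cross terms into a single further copy of $-\mathcal B([x,y], z\cdot w)$, and collecting coefficients yields $3\mathcal B([x,y], z\cdot w)=3(\mathcal B([w,y], z\cdot x)+\mathcal B([x,w], z\cdot y))$, which is conclusive since $\mathrm{char}\,\mathbb F=0$.
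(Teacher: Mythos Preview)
Your overall strategy---invoke Theorem~\ref{thm:commutative 2-cocycles and anti-pre-Lie algebras}, then test \eqref{eq:defi:anti-pre-Lie Poisson1} and \eqref{eq:defi:anti-pre-Lie Poisson2} against an arbitrary $w$ via nondegeneracy---is exactly what the paper does, and your treatment of \eqref{eq:defi:anti-pre-Lie Poisson2} matches the paper's one-line computation.

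Where you diverge is \eqref{eq:defi:anti-pre-Lie Poisson1}. Your stated reduction is not what one actually obtains: pairing $2[x,y]\cdot z + x\cdot(y\circ z) - y\cdot(x\circ z)$ with $w$ and using only associative invariance plus the defining relation for $\circ$ gives
\[
\mathcal B\bigl(z,\,2[x,y]\cdot w + [y,\,x\cdot w] - [x,\,y\cdot w]\bigr),
\]
not the cyclic-looking identity $\mathcal B([x,y],z\cdot w)=\mathcal B([w,y],z\cdot x)+\mathcal B([x,w],z\cdot y)$ that you wrote down. The good news is that the correct reduction is \emph{easier}, not harder: the bracketed expression is $2w\cdot[x,y]-[x\cdot w,y]-[x,y\cdot w]$, which vanishes by a single application of the transposed Poisson axiom~\eqref{eq:defi:transposed Poisson algebra}. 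This is precisely the paper's proof. So \eqref{eq:defi:anti-pre-Lie Poisson1} is no harder than \eqref{eq:defi:anti-pre-Lie Poisson2}; neither requires the commutative $2$-cocycle identity at this stage, and there is no factor of~$3$ to chase. Your multi-step plan (two rounds of $2$-cocycle, two rounds of transposed Poisson, then dividing by~$3$) does establish a true auxiliary identity, but it is not the identity you need, and the detour is unnecessary.
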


\begin{proof} By Theorem~\ref{thm:commutative 2-cocycles and anti-pre-Lie algebras}, $(A,\circ)$ is an anti-pre-Lie algebra.
Let $x,y,z,w\in A$. Then we have
\begin{eqnarray*}
&&\mathcal{B}(2[x,y]\cdot z+x\cdot(y\circ z)-y\cdot(x\circ z),w)=\mathcal{B}(z,2[x,y]\cdot w+[y,x\cdot w]-[x,y\cdot w])=0,\\
&&\mathcal{B}(2x\circ(y\cdot z)-(z\cdot x)\circ y-z\cdot(x\circ y),w)=\mathcal{B}(y,2z\cdot[x,w]-[z\cdot x,w]-[x,z\cdot w])=0.
\end{eqnarray*}
Thus $(A,\cdot,\circ)$ is an anti-pre-Lie Poisson algebra.
\end{proof}

\begin{ex}
Let $Q$ be a derivation on a commutative associative algebra
$(A,\cdot)$. Suppose that $\mathcal B$ is a nondegenerate
symmetric invariant bilinear form on $(A,\cdot)$ and $\hat Q$ is
the adjoint operator of $Q$ with respect to $\mathcal B$. Define a
bilinear operation $[-,-]$ on $A$ by Eq.~(\ref{eq:ex:commutative
2-cocycle}). Then $(A,\cdot,[-,-])$ is a transposed Poisson
algebra (\cite{Bai2020}) and $\mathcal B$ is a commutative
2-cocycle on $(A,[-,-])$. Furthermore, by Remark~\ref{rmk:cons} and Proposition~\ref{thm:bilinear form on transposed Poisson algebra},
$(A,\cdot,\circ_3)$ is an anti-pre-Lie Poisson algebra, where
$(A,\circ_3)$ is defined by Eq. (\ref{rmk:admiisible Novikov
algebras from derivations4}). It is an admissible Novikov-Poisson
algebra if Eq.~(\ref{eq:condd}) holds and in this
case, $(A,\circ_3)$ is exactly defined by
Eq.~(\ref{eq:cor:admissible Novikov algebra from admissible
pair}).
\end{ex}

At the end of this subsection, we consider a tensor theory
of anti-pre-Lie Poisson algebras and admissible Novikov-Poisson
algebras.

\begin{thm}\label{thm:tensor algebra}
Let $(A_{1},\cdot_{1},\circ_{1})$ and
$(A_{2},\cdot_{2},\circ_{2})$ be two anti-pre-Lie Poisson
algebras. Define two bilinear operations $\cdot$ and $\circ$ on $A_{1}\otimes
A_{2}$ respectively by
\begin{equation}\label{eq:thm:tensor algebra1}
(x_{1}\otimes x_{2})\cdot(y_{1}\otimes y_{2})=x_{1}\cdot_{1} y_{1}\otimes x_{2}\cdot_{2}y_{2},
\end{equation}
\begin{equation}\label{eq:thm:tensor algebra2}
(x_{1}\otimes x_{2})\circ(y_{1}\otimes y_{2})=x_{1}\circ_{1}y_{1}\otimes x_{2}\cdot_{2}y_{2}+x_{1}\cdot_{1}y_{1}\otimes x_{2}\circ_{2}y_{2},
\end{equation}
for all $x_{1},y_{1}\in A_{1}, x_{2},y_{2}\in A_{2}$. Then
$(A_{1}\otimes A_{2},\cdot, \circ)$ is an anti-pre-Lie Poisson
algebra.
\end{thm}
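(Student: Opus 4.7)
The plan is to verify the defining axioms of an anti-pre-Lie Poisson algebra on $A_1 \otimes A_2$ by direct expansion, using the axioms of each factor together with the consequences collected in Lemma \ref{pro:anti-pre-Lie Poisson}.

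First, I would observe that $(A_1 \otimes A_2, \cdot)$ is a commutative associative algebra by the standard tensor product construction. Next, I would record the bracket induced by $\circ$ on the tensor product: since $\cdot_1$ and $\cdot_2$ are commutative,
\begin{equation*}
[x_1\otimes x_2, y_1\otimes y_2] = [x_1,y_1]_1 \otimes x_2 \cdot_2 y_2 + x_1 \cdot_1 y_1 \otimes [x_2, y_2]_2,
\end{equation*}
where $[-,-]_i$ denotes the commutator of $\circ_i$. This formula is what makes the Poisson compatibility identities available on the tensor side.

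Second, I would verify the two Poisson compatibility equations \eqref{eq:defi:anti-pre-Lie Poisson1} and \eqref{eq:defi:anti-pre-Lie Poisson2} on $A_1\otimes A_2$. Each side, upon expansion on elementary tensors, decomposes into four summands; a pairwise matching, with factor-wise applications of \eqref{eq:defi:anti-pre-Lie Poisson1} and \eqref{eq:defi:anti-pre-Lie Poisson2} on $A_1$ and $A_2$, produces cancellations. Here it is convenient to use the alternative forms (5.3), (5.4), (5.5), (5.9) from Lemma \ref{pro:anti-pre-Lie Poisson} to rewrite the cross terms in a common shape before matching.

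Third, I would verify that $(A_1\otimes A_2, \circ)$ is an anti-pre-Lie algebra, which by Proposition \ref{pro:anti-pre-Lie is Lie-admissible} reduces to \eqref{eq:defi:anti-pre-Lie algebras1} and \eqref{eq:defi:anti-pre-Lie algebras5}. This is the main bookkeeping step. Expanding $(x_1\otimes x_2)\circ((y_1\otimes y_2)\circ(z_1\otimes z_2)) - (y_1\otimes y_2)\circ((x_1\otimes x_2)\circ(z_1\otimes z_2)) - [y_1\otimes y_2, x_1\otimes x_2]\circ(z_1\otimes z_2)$ produces tensor terms of several types: the ``pure'' $\circ$-$\circ$ terms in the first slot of each tensor factor vanish using \eqref{eq:defi:anti-pre-Lie algebras1} applied in $A_1$ and in $A_2$ respectively, while the ``mixed'' cross terms pairing $\circ$ in one slot against $\cdot$ in the other are precisely in the form controlled by the identities (5.3)--(5.9) of Lemma \ref{pro:anti-pre-Lie Poisson}; these identities rewrite each cross term so that all remaining terms cancel telescopically. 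The cyclic sum in \eqref{eq:defi:anti-pre-Lie algebras5} is handled by the same strategy, now using \eqref{eq:defi:anti-pre-Lie algebras5} on each factor together with the cyclic consequences (5.7), (5.8) of Lemma \ref{pro:anti-pre-Lie Poisson}.

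The main obstacle is the combinatorial bookkeeping in the anti-pre-Lie identity \eqref{eq:defi:anti-pre-Lie algebras1}: each of the three terms expands into many elementary tensor summands, and correctly grouping the mixed $\circ$-$\cdot$ tensor terms so that the derived identities of Lemma \ref{pro:anti-pre-Lie Poisson} apply is delicate. Once this grouping is chosen, however, the cancellations are direct and no further identity is needed.
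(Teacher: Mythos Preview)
Your proposal is correct and follows essentially the same approach as the paper: direct expansion on elementary tensors and verification of each axiom using the factor-wise identities, with the derived compatibility identities of Lemma~\ref{pro:anti-pre-Lie Poisson} doing the work on the mixed $\circ$-$\cdot$ cross terms. The only cosmetic differences are that the paper presents the verification of \eqref{eq:defi:anti-pre-Lie algebras1} in detail first and then declares the remaining identities \eqref{eq:defi:anti-pre-Lie algebras2}, \eqref{eq:defi:anti-pre-Lie Poisson1}, \eqref{eq:defi:anti-pre-Lie Poisson2} follow ``by a similar proof'', whereas you check the Poisson compatibilities first and use \eqref{eq:defi:anti-pre-Lie algebras5} in place of \eqref{eq:defi:anti-pre-Lie algebras2} via Proposition~\ref{pro:anti-pre-Lie is Lie-admissible}; neither change affects the substance of the argument.
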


\begin{proof}
It is known that $(A_{1}\otimes A_{2},\cdot)$ is a commutative
associative algebra. Let $x_{1},y_{1},z_{1}\in A_{1}$, and
$x_{2},y_{2},z_{2}\in A_{2}$. 
Then we have
{\small\begin{eqnarray*}
&&(x_{1}\otimes x_{2})\circ((y_{1}\otimes y_{2})\circ(z_{1}\otimes z_{2}))-(y_{1}\otimes y_{2})\circ((x_{1}\otimes x_{2})\circ(z_{1}\otimes z_{2}))-[y_{1}\otimes y_{2}, x_{1}\otimes x_{2}]\circ(z_{1}\otimes z_{2})\\
&&=x_{1}\circ_{1}(y_{1}\circ_{1} z_{1})\otimes x_{2}\cdot_{2} y_{2}\cdot_{2}z_{2}+x_{1}\cdot_{1}(y_{1}\circ_{1} z_{1})\otimes x_{2}\circ_{2}(y_{2}\cdot_{2}z_{2})+x_{1}\circ_{1}(y_{1}\cdot_{1}z_{1})\otimes x_{2}\cdot_{2}(y_{2}\circ_{2} z_{2})\\
&&\hspace{0.4cm}+x_{1}\cdot_{1}y_{1}\cdot_{1}z_{1}\otimes x_{2}\circ_{2}(y_{2}\circ_{2}z_{2})-
y_{1}\circ_{1}(x_{1}\circ_{1} z_{1})\otimes y_{2}\cdot_{2} x_{2}\cdot_{2}z_{2}-y_{1}\cdot_{1}(x_{1}\circ_{1} z_{1})\otimes y_{2}\circ_{2}(x_{2}\cdot_{2}z_{2})\\
&&\hspace{0.4cm}-y_{1}\circ_{1}(x_{1}\cdot_{1}z_{1})\otimes y_{2}\cdot_{2}(x_{2}\circ_{2} z_{2})-y_{1}\cdot_{1}x_{1}\cdot_{1}z_{1}\otimes y_{2}\circ_{2}(x_{2}\circ_{2}z_{2})
-[y_{1},x_{1}]_{1}\circ_{1}z_{1}\otimes x_{2}\cdot_{2} y_{2}\cdot_{2} z_{2}\\
&&\hspace{0.4cm}-[y_{1},x_{1}]_{1}\cdot_{1}z_{1}\otimes (x_{2}\cdot_{2} y_{2})\circ_{2} z_{2}-(x_{1}\cdot_{1} y_{1})\circ_{1} z_{1}\otimes [y_{2},x_{2}]_{2}\cdot_{2}z_{2}-x_{1}\cdot_{1} y_{1}\cdot_{1} z_{1}\otimes [y_{2},x_{2}]_{2}\circ_{2}z_{2}\\
&&=x_{1}\cdot_{1}(y_{1}\circ_{1} z_{1})\otimes x_{2}\circ_{2}(y_{2}\cdot_{2}z_{2})+x_{1}\circ_{1}(y_{1}\cdot_{1}z_{1})\otimes x_{2}\cdot_{2}(y_{2}\circ_{2} z_{2})-y_{1}\cdot_{1}(x_{1}\circ_{1} z_{1})\otimes y_{2}\circ_{2}(x_{2}\cdot_{2}z_{2})\\
&&\hspace{0.4cm}-y_{1}\circ_{1}(x_{1}\cdot_{1}z_{1})\otimes y_{2}\cdot_{2}(x_{2}\circ_{2} z_{2})+[x_{1},y_{1}]_{1}\cdot_{1}z_{1}\otimes(x_{2}\cdot_{2} y_{2})\circ_{2}z_{2}+(x_{1}\cdot_{1}y_{1})\circ_{1}z_{1}\otimes [x_{2},y_{2}]_{2}\cdot_{2} z_{2}.
\end{eqnarray*}}
By Eqs.~(\ref{eq:defi:anti-pre-Lie Poisson1}) and
(\ref{eq:defi:anti-pre-Lie Poisson2}), we have
{\small\begin{eqnarray*}
&&[x_{1}, y_{1}]_{1}\cdot_{1} z_{1}\otimes (x_{2}\cdot_{2} y_{2})\circ_{2} z_{2}=\dfrac{1}{2}y_{1}\cdot_{1}(x_{1}\circ_{1} z_{1})\otimes (x_{2}\cdot_{2} y_{2})\circ_{2} z_{2}-\dfrac{1}{2}x_{1}\cdot_{1}(y_{1}\circ_{1} z_{1})\otimes y_{2}\circ_{2}(x_{2}\cdot_{2} z_{2}),\\
&&\dfrac{1}{2}y_{1}\cdot_{1}(x_{1}\circ_{1} z_{1})\otimes (x_{2}\cdot_{2} y_{2})\circ_{2} z_{2}-y_{1}\cdot_{1}(x_{1}\circ_{1} z_{1})\otimes y_{2}\circ_{2}(x_{2}\cdot_{2}z_{2})=-\dfrac{1}{2}y_{1}\cdot_{1}(x_{1}\circ_{1} z_{1})\otimes x_{2}\cdot_{2} (y_{2}\circ_{2} z_{2}),\\
&&-\dfrac{1}{2}x_{1}\cdot_{1}(y_{1}\circ_{1} z_{1})\otimes (x_{2}\cdot_{2} y_{2})\circ_{2} z_{2}+x_{1}\cdot_{1}(y_{1}\circ_{1} z_{1})\otimes x_{2}\circ_{2}(y_{2}\cdot_{2}z_{2})=\dfrac{1}{2}x_{1}\cdot_{1}(y_{1}\circ_{1} z_{1})\otimes y_{2}\cdot_{2} (x_{2}\circ_{2} z_{2}).\\
\end{eqnarray*}}
Thus
{\small\begin{eqnarray*}
&&[x_{1}, y_{1}]_{1}\cdot_{1} z_{1}\otimes (x_{2}\cdot_{2} y_{2})\circ_{2} z_{2}-y_{1}\cdot_{1}(x_{1}\circ_{1} z_{1})\otimes y_{2}\circ_{2}(x_{2}\cdot_{2}z_{2})+x_{1}\cdot_{1}(y_{1}\circ_{1} z_{1})\otimes x_{2}\circ_{2}(y_{2}\cdot_{2}z_{2})\\
&&=\dfrac{1}{2}x_{1}\cdot_{1}(y_{1}\circ_{1} z_{1})\otimes y_{2}\cdot_{2} (x_{2}\circ_{2} z_{2})-\dfrac{1}{2}y_{1}\cdot_{1}(x_{1}\circ_{1} z_{1})\otimes x_{2}\cdot_{2} (y_{2}\circ_{2} z_{2}),
\end{eqnarray*}}
and similarly
{\small\begin{eqnarray*}
&&(x_{1}\cdot_{1} y_{1})\circ_{1} z_{1}\otimes [x_{2}, y_{2}]_{2}\cdot_{2} z_{2}-y_{1}\circ_{1}(x_{1}\cdot_{1} z_{1})\otimes y_{2}\cdot_{2}(x_{2}\circ_{2} z_{2})+x_{1}\circ_{1}(y_{1}\cdot_{1} z_{1})\otimes x_{2}\cdot_{2}(y_{2}\circ_{2} z_{2})\\
&&=\dfrac{1}{2}y_{1}\cdot_{1}(x_{1}\circ_{1} z_{1})\otimes x_{2}\cdot_{2} (y_{2}\circ_{2} z_{2})-\dfrac{1}{2}x_{1}\cdot_{1}(y_{1}\circ_{1} z_{1})\otimes y_{2}\cdot_{2} (x_{2}\circ_{2} z_{2}).
\end{eqnarray*}}
Hence Eq.~(\ref{eq:defi:anti-pre-Lie algebras1}) holds on
$A_{1}\otimes A_{2}$. By a similar proof,
Eqs.~(\ref{eq:defi:anti-pre-Lie algebras2}),
(\ref{eq:defi:anti-pre-Lie Poisson1}) and
(\ref{eq:defi:anti-pre-Lie Poisson2})  hold on $A_{1}\otimes
A_{2}$. Thus  $(A_{1}\otimes A_{2},\cdot, \circ)$ is an
anti-pre-Lie Poisson algebra.
\end{proof}

\begin{cor}
Let $(A_{1},\cdot_{1},\circ_{1})$ and
$(A_{2},\cdot_{2},\circ_{2})$ be two admissible Novikov-Poisson
algebras. Define two bilinear operations $\cdot$ and $\circ$ on $A_{1}\otimes
A_{2}$ respectively by Eqs.~(\ref{eq:thm:tensor algebra1})
and (\ref{eq:thm:tensor algebra2}). Then $(A_{1}\otimes
A_{2},\cdot, \circ)$ is an admissible Novikov-Poisson algebra.
\end{cor}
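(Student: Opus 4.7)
The plan is to reduce to the known tensor construction for Novikov--Poisson algebras via Proposition~\ref{pro:from admissible Novikov Poisson to Novikov Poisson}, so that most of the work is done by results already in hand.

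First, for each $i=1,2$, I pass to the $(-2)$-algebra $\star_i$ of $(A_i,\circ_i)$ defined by $x\star_i y=x\circ_i y-2y\circ_i x$. By Proposition~\ref{pro:from admissible Novikov Poisson to Novikov Poisson}, $(A_i,\cdot_i,\star_i)$ is a Novikov--Poisson algebra. Next, I invoke the tensor theory of Novikov--Poisson algebras from \cite{Xu1997} to obtain a Novikov--Poisson algebra structure $(A_1\otimes A_2,\cdot,\star)$, where $\cdot$ is given by Eq.~(\ref{eq:thm:tensor algebra1}) and $\star$ is given by
\begin{equation*}
(x_1\otimes x_2)\star(y_1\otimes y_2)=x_1\star_1 y_1\otimes x_2\cdot_2 y_2+x_1\cdot_1 y_1\otimes x_2\star_2 y_2.
\end{equation*}

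Second, I compute the $2$-algebra of $(A_1\otimes A_2,\star)$ and check it matches Eq.~(\ref{eq:thm:tensor algebra2}). Expanding and using commutativity of each $\cdot_i$, the cross terms combine into $x_i\star_i y_i+2y_i\star_i x_i=x_i\circ_i y_i$ on each tensor factor, yielding exactly
\begin{equation*}
(x_1\otimes x_2)\star(y_1\otimes y_2)+2(y_1\otimes y_2)\star(x_1\otimes x_2)=x_1\circ_1 y_1\otimes x_2\cdot_2 y_2+x_1\cdot_1 y_1\otimes x_2\circ_2 y_2,
\end{equation*}
which is precisely the operation $\circ$ on $A_1\otimes A_2$ of Eq.~(\ref{eq:thm:tensor algebra2}). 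Hence $(A_1\otimes A_2,\circ)$ is the $2$-algebra of the Novikov algebra $(A_1\otimes A_2,\star)$. Applying the converse direction of Proposition~\ref{pro:from admissible Novikov Poisson to Novikov Poisson} then immediately yields that $(A_1\otimes A_2,\cdot,\circ)$ is an admissible Novikov--Poisson algebra.

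The only delicate point is confirming that the form of the tensor product multiplication $\star$ furnished by \cite{Xu1997} is the symmetric one displayed above, so that the $2$-algebra computation lands on the nose at Eq.~(\ref{eq:thm:tensor algebra2}); this is essentially bookkeeping with commutativity of $\cdot_i$. If one prefers a self-contained route bypassing the reference, Theorem~\ref{thm:tensor algebra} already gives the anti-pre-Lie Poisson structure on $A_1\otimes A_2$, so it suffices to verify only Eq.~(\ref{eq:defi:admissible Novikov algebras1}) on $(A_1\otimes A_2,\circ)$. This is a direct (if somewhat lengthy) expansion: the terms in $2(x\circ[y,z])$ and $(x\circ y)\circ z-(x\circ z)\circ y$ on $A_1\otimes A_2$ split into tensor pieces that are handled either by the admissible Novikov identity on each $(A_i,\circ_i)$ or by the compatibility relations of Lemma~\ref{pro:anti-pre-Lie Poisson} applied to $(A_i,\cdot_i,\circ_i)$. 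I would use the $q$-algebra reduction as the primary proof and mention the direct verification as a remark, since the former is conceptually cleaner and reflects the overall philosophy of the paper.
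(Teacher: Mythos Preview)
Your approach is essentially the paper's: pass to Novikov--Poisson algebras via the $q$-algebra correspondence, use Xu's tensor product, then translate back through Proposition~\ref{pro:from admissible Novikov Poisson to Novikov Poisson}. The paper phrases it as ``let $\star_i$ be such that $\circ_i$ is the $2$-algebra of $\star_i$'' and then literally reads off $x_i\star_i y_i+2y_i\star_i x_i=x_i\circ_i y_i$, which is what you also assert.

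However, there is an arithmetic slip in your version. You take $\star_i$ to be the $(-2)$-algebra of $\circ_i$, but the $2$-algebra and the $(-2)$-algebra are \emph{not} inverse operations: by the remark around Eqs.~(\ref{eq:pro:admissible Novikov algebras and Novikov algebras1})--(\ref{eq:pro:admissible Novikov algebras and Novikov algebras2}) their composite multiplies by $-3$. Concretely, with your $\star_i$ one gets
\[
x_i\star_i y_i+2y_i\star_i x_i=(x_i\circ_i y_i-2y_i\circ_i x_i)+2(y_i\circ_i x_i-2x_i\circ_i y_i)=-3\,x_i\circ_i y_i,
\]
so the $2$-algebra of your tensor $\star$ is $-3\circ$, not $\circ$. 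The cleanest repair is to check instead that the tensor $\star$ is the $(-2)$-algebra of the tensor $\circ$ (which is a one-line computation using commutativity of each $\cdot_i$, and is exactly the hypothesis needed in Proposition~\ref{pro:from admissible Novikov Poisson to Novikov Poisson}); alternatively, take $\star_i$ to be $-\tfrac{1}{3}$ of your choice, as the paper effectively does, so that $\circ_i$ is genuinely its $2$-algebra. With either fix the argument goes through and coincides with the paper's.
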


\begin{proof} One can prove that
$(A_{1}\otimes A_{2},\circ)$ is an admissible Novikov algebra
directly or as follows. Let $(A_{1},\cdot_{1},\star_{1})$ and
$(A_{2},\cdot_{2},\star_{2})$ be two Novikov-Poisson algebras such
that
 $(A_{1},\circ_{1})$ and $(A_{2},\circ_{2})$ are  the $2$-algebras of
 $(A_{1},\star_{1})$ and $(A_{2},\star_{2})$ respectively. By \cite{Xu1996}, there is a Novikov-Poisson algebra $(A_{1}\otimes A_{2},\cdot,\star)$ in which
$(A_1\otimes A_2, \cdot)$ is
given by Eq.~(\ref{eq:thm:tensor algebra1}) and $(A_1\otimes A_2,\star)$ is given by
\begin{equation}\label{eq:Novikov Poisson algebra}
(x_{1}\otimes x_{2})\star(y_{1}\otimes y_{2})=x_{1}\star_{1}y_{1}\otimes x_{2}\cdot_{2}y_{2}+x_{1}\cdot_{1} y_{1}\otimes x_{2}\star_{2}y_{2}, \forall x_{1},y_{1}\in A_{1}, x_{2},y_{2}\in A_{2}.
\end{equation}
Let $(A_{1}\otimes A_{2}, \circ)$ be the $2$-algebra of
$(A_{1}\otimes A_{2}, \star)$. Then by Proposition \ref{pro:from
admissible Novikov Poisson to Novikov Poisson},  $(A_{1}\otimes
A_{2},\cdot, \circ)$ is an admissible Novikov-Poisson algebra in
which $(A_1\otimes A_2,\cdot)$ is given by Eq.~(\ref{eq:thm:tensor
algebra1}) and $(A_1\otimes A_2,\circ)$ is given by
\begin{eqnarray*}
&&(x_{1}\otimes x_{2})\circ (y_{1}\otimes y_{2})=(x_{1}\otimes x_{2})\star(y_{1}\otimes y_{2})+2(y_{1}\otimes y_{2})\star(x_{1}\otimes x_{2})\\
&&=x_{1}\star_{1}y_{1}\otimes x_{2}\cdot_{2}y_{2}+x_{1}\cdot_{1}y_{1}\otimes x_{2}\star_{2}y_{2}+2y_{1}\star_{1}x_{1}\otimes x_{2}\cdot_{2}y_{2}+2x_{1}\cdot_{1}y_{1}\otimes y_{2}\star_{2}x_{2}\\
&&=x_{1}\circ_{1}y_{1}\otimes x_{2}\cdot_{2}y_{2}+x_{1}\cdot_{1}y_{1}\otimes x_{2}\circ_{2}y_{2},
\end{eqnarray*}
for all $x_1, y_1\in A_1$ and $x_2,y_2\in A_2$, which is exactly Eq.~(\ref{eq:thm:tensor algebra2}).
\end{proof}

\bigskip

 \noindent
 {\bf Acknowledgements.}  This work is supported by
NSFC (11931009), the Fundamental Research Funds for the Central
Universities and Nankai Zhide Foundation.

\end{document}